\newcommand{\cK}{\mathcal{K}}
\newcommand{\Rd}{\mathbb{R}^d}
\newcommand{\abs}[1]{\vert #1 \rvert}
\newcommand{\bd}[3]{\mathcal{B}_{#1}(#2,#3)}
\newcommand{\scf}{\phi}
\newcommand{\dualscf}{\phi^{*}}
\newcommand{\sct}[1]{\phi_{#1}}
\newcommand{\dualsct}[1]{\phi^*_{#1}}
\newcommand{\mscf}{\Phi}
\newcommand{\dualmscf}{\Phi^*}
\newcommand{\msct}[1]{\Phi_{#1}}
\newcommand{\dualmsct}[1]{\Phi^{*}_{#1}}
\newcommand{\scfi}{\phi}
\newcommand{\comp}{r}
\newcommand{\sgn}{\operatorname{sgn}}
\newcommand{\grad}{\triangledown }
\newcommand{\norm}[1]{\lVert #1\rVert }
\newcommand{\dualnorm}[1]{\lVert #1\rVert_* }
\newcommand{\inner}[2]{\langle #1,#2\rangle }
\newcommand{\ex}[1]{\mathbb{E}[#1]}
\newcommand{\tr}{\operatorname{Tr}}
\newcommand{\diag}{\operatorname{diag}}
\newcommand{\sequ}[1]{\{#1_t\}}
\DeclareMathOperator*{\argmin}{\arg\min} 
\theoremstyle{thmstyleone}%
\newtheorem{theorem}{Theorem}
\newtheorem{proposition}{Proposition}%
\newtheorem{lemma}{Lemma}%
\newtheorem{corollary}{Corollary}
\theoremstyle{thmstyletwo}%
\theoremstyle{thmstylethree}%
\begin{document}

\title[Optimistic Exponentiated Update]{Optimistic Optimisation of Composite Objective with Exponentiated Update}


\author*[1]{\sur{Weijia} \fnm{Shao}} \email{weijia.shao@campus.tu-berlin.de}

\author[2]{\sur{Fikret} \fnm{Sivrikaya}}  \email{fikret.sivrikaya@gt-arc.com}

\author[1,2]{\sur{Sahin} \fnm{Albayrak}} \email{sahin.albayrak@dai-labor.de}

\affil*[1]{\orgdiv{Faculty of Electrical Engineering and Computer Science}, \orgname{Technische Universit\"at Berlin}, \orgaddress{\street{Ernst-Reuter-Platz 7}, \city{Berlin}, \postcode{10587}, \country{Germany}}}

\affil[2]{\orgname{ GT-ARC Gemeinn\"utzige GmbH}, \orgaddress{\street{Ernst-Reuter-Platz 7}, \city{Berlin}, \postcode{10587}, \country{Germany}}}

\abstract{This paper proposes a new family of algorithms for the online optimisation of composite objectives. The algorithms can be interpreted as the combination of the exponentiated gradient and $p$-norm algorithm. Combined with algorithmic ideas of adaptivity and optimism, the proposed algorithms achieve a sequence-dependent regret upper bound, matching the best-known bounds for sparse target decision variables. Furthermore, the algorithms have efficient implementations for popular composite objectives and constraints and can be converted to stochastic optimisation algorithms with the optimal accelerated rate for smooth objectives.}

\keywords{Exponetiated Gradient, Composite Objective, Online Convex Optimisation, Sparsity} 



\maketitle
\section{Introduction}
Many machine learning problems involve minimising high dimensional composite objectives \citep{lu2014smoothed,ribeiro2016should,NEURIPS2018_c5ff2543,xie2018nonstop}. For example, in the task of explaining predictions of an image classifier \citep{ribeiro2016should,NEURIPS2018_c5ff2543}, we need to find a sufficiently small set of features explaining the prediction by solving the following constrained optimisation problem
\[
\begin{split}
\min_{x\in\Rd}\quad &l(x)+\lambda_1\norm{x}_1+\frac{\lambda_2}{2}\norm{x}_2^2\\\
\textrm{s.t.} \quad &\abs{x_i}\leq c_i \textrm{ for all } i=1,\ldots, d,  \\
\end{split}
\]
where $l$ is a function relating to the classifier, $\lambda_1$ controls the sparsity of the feature set, $\lambda_2$ controls the complexity of the feature set, and $c_1,\ldots,c_d$ are the ranges of the features. For $l$ with a complicated structure and large $d$, it is practical to solve the problem by optimising the first-order approximation of the objective function \citep{lan2020first}. However, the first-order methods can not attain optimal performance due to the non-smooth component $\lambda_1\norm{\cdot}_1$. Furthermore, the purpose of introducing the $\ell_1$ regularisation is to ensure the sparsity of the decision variable. Applying the first-order algorithms directly on the subgradient of $\lambda_1\norm{\cdot}_1$ does not lead to sparse updates \citep{duchi2010composite}. We refer to the objective function consisting of a loss with a complicated structure and a simple (possibly non-smooth) convex regularisation term as a composite objective. 

This paper focuses on the more general online convex optimisation (\textbf{OCO}), which can be considered as an iterative game between a player and an adversary. In each round $t$ of the game, the player makes a decision $x_t\in \cK$. Next, the adversary selects and reveals a convex loss $l_t$ to the player, who then suffers the composite loss $f_t(x)=l_t(x)+\comp_t(x)$, where $l_t:\cK\to \mathbb{R}$ is a convex function revealed at each iteration and $\comp_t:\mathbb{X}\to \mathbb{R}_{\geq 0}$ is a known closed convex function. The target is to develop algorithms minimising the regret of not choosing the best decision $x\in\cK$
\[
\mathcal{R}_{1:T}=\sum_{t=1}^Tf_t(x_t)-\min_{x\in\cK}\sum_{t=1}^Tf_t(x).
\]
An online optimisation algorithm can be converted into a stochastic optimisation algorithm using the online-to-batch conversion technique \citep{cesa2004generalization}, which is our primary motivation. In addition to that, online optimisation also has many direct applications, such as recommender systems \citep{song2014online} and time series prediction \citep{anava2013online}.

Given a sequence of subgradients $\sequ{g}$ of $\sequ{l}$, we are interested in the so-called adaptive algorithms ensuring regret bounds of the form $\mathcal{O}(\sqrt{\sum_{t=1}^T\dualnorm{g_t}^2})$. The adaptive algorithms are worst-case optimal in the online setting \citep{mcmahanadaptive} and can be converted into stochastic optimisation algorithms with optimal convergence rates \citep{levy2018online,kavis2019unixgrad,cutkosky2019anytime, joulani2020simpler}. The adaptive subgradient methods (\textbf{AdaGrad}) \citep{duchi2011adaptive} and their variants \citep{duchi2011adaptive,orabona2015generalized,orabona2018scale,alacaoglu2020new} have become the most popular adaptive algorithms in recent years. They are often applied to estimating deep learning models and outperform standard optimisation algorithms when the gradient vectors are sparse. However, such property can not be expected in every problem. If the decision variables are in an $\ell_1$ ball and gradient vectors are dense, the \textbf{Adagrad}-style algorithms do not have an optimal theoretical guarantee due to the sub-linear regret dependence on the dimensionality. 

The exponentiated gradient (\textbf{EG}) methods \citep{kivinen1997exponentiated,arora2012multiplicative}, which are designed for estimating weights in the positive orthant, enjoy the regret bound growing logarithmically with the dimensionality. The $\operatorname{\textbf{EG}^\pm}$ algorithm generalises this idea to negative weights \citep{kivinen1997exponentiated,warmuth2007winnowing}. Given $d$ dimensional problems with the maximum norm of the gradient bounded by $G$, the regret of $\operatorname{\textbf{EG}^\pm}$ is upper bounded by $\mathcal{O}(G\sqrt{T\ln d})$. As the performance of the $\operatorname{\textbf{EG}^\pm}$ algorithm depends strongly on the choice of hyperparameters, the $p$-norm algorithm \citep{gentile2003robustness}, which is less sensitive to the tuning of hyperparameters, is introduced to approach the logarithmic behaviour of $\operatorname{\textbf{EG}^\pm}$. \cite{kakade2012regularization} further extends the $p$-norm algorithm to learning with matrices. An adaptive version of the $p$-norm algorithm is analysed in \cite{orabona2015generalized}, which has a regret upper bound proportional to $\norm{x}_{p,*}^2\sqrt{\sum_{t=1}^T\norm{g_t}_{p}^2}$ for a given sequence of gradients $\sequ{g}$. By choosing $p=2\ln d$, a regret upper bound $\mathcal{O}(\norm{x}_1^2\sqrt{\ln d \sum_{t=1}^T\norm{g_t}_\infty^2})$ can be achieved. However, tuning hyperparameters is still required to attain the optimal regret $\mathcal{O}(\norm{x}_1\sqrt{\ln d \sum_{t=1}^T\norm{g_t}_\infty^2})$.

Recently, \cite{ghai2020exponentiated} has introduced a hyperbolic regulariser for online mirror descent update (\textbf{HU}), which can be viewed as an interpolation between gradient descent and \textbf{EG}. It has a logarithmic behaviour as in \textbf{EG} and a stepsize that can be flexibly scheduled as gradient descent. However, many optimisation problems with sparse targets have an $\ell_1$ or nuclear regulariser in the objective function. Otherwise, the optimisation algorithm has to pick a decision variable from a compact decision set. Due to the hyperbolic regulariser, it is difficult to derive a closed-form solution for either case. \cite{ghai2020exponentiated} has proposed a workaround by tuning a temperature-like hyperparameter to normalise the decision variable at each iteration, which is equivalent to the $\operatorname{\textbf{EG}^\pm}$ algorithm and leads to a performance dependence on the tuning. 

This paper proposes a family of algorithms for the online optimisation of composite objectives. The algorithms employ an entropy-like regulariser combined with algorithmic ideas of adaptivity and optimism. Equipped with the regulariser, the online mirror descent (\textbf{OMD}) and the follow-the-regulariser-leader (\textbf{FTRL}) algorithms update the absolute value of the scalar components of the decision variable in the same way as \textbf{EG} in the positive orthant. The directions of the decision variables are set in the same way as the $p$-norm algorithm. To derive the regret upper bound, we first show that the regulariser is strongly convex with respect to the $\ell_1$-norm over the $\ell_1$ ball. Then we analyse the algorithms in the comprehensive framework for optimistic algorithms with adaptive regularisers \citep{joulani2017modular}. Given the radius of decision set $D$, sequences of gradients $\sequ{g}$ and hints $\sequ{h}$, the proposed algorithms achieve a regret upper bound in the form of $\mathcal{O}(D\sqrt{\ln d\sum_{t=1}^T\norm{g_t-h_t}^2_\infty})$. With the techniques introduced in \cite{ghai2020exponentiated}, a spectral analogue of the entropy-like regulariser can be found and proved to be strongly convex with respect to the nuclear norm over the nuclear ball, from which the best-known regret upper bound depending on $\sqrt{\ln(\min\{m,n\})}$ for problems in $\mathbb{R}^{m,n}$ follows. 

Furthermore, the algorithms have closed-form solutions for the $\ell_1$ and nuclear regularised objective functions. For the $\ell_2$ and Frobenius regularised objectives, the update rules involve values of the principal branch of the \textit{Lambert function}, which can be well approximated. We propose a sorting based procedure projecting the solution to the decision set for the $\ell_1$ or nuclear ball constrained problems. Finally, the proposed online algorithms can be converted into algorithms for stochastic optimisation with the technique introduced in \cite{joulani2020simpler}. We show that the converted algorithms guarantee an optimal accelerated convergence rate for smooth objective functions. The convergence rate depends logarithmically on the dimensionality of the problem, which suggests its advantage compared to the accelerated \textbf{AdaGrad}-Style algorithms \citep{levy2018online,cutkosky2019anytime,joulani2020simpler}. 

The rest of the paper is organised as follows. Section~2 reviews the existing work. Section~3 introduces the notation and preliminary concepts. Next, we present and analyse our algorithms in Section~4. In Section~5, we derive efficient implementations for some popular choices of composite objectives, constraints and stochastic optimisation. Section~6 demonstrates the empirical evaluations using both synthetic and real-world data. Finally, we conclude our work in Section~7.

\section{Related Work}
Our primary motivation is to solve the optimisation problems with an elastic net regulariser in their objective function, which are highly involved in attacking \citep{carlini2017towards,chen2018ead,9413170} and explaining \citep{NEURIPS2018_c5ff2543,ribeiro2016should} deep neural networks. The proximal gradient method (\textbf{PGD}) \citep{nesterov2003introductory} and its accelerated variants \citep{beck2009fast} are usually applied to solving the problem. However, these algorithms are not practical since they require prior knowledge about the smoothness of the objective function to ensure their convergence. 

The \textbf{AdaGrad}-style algorithms \citep{duchi2011adaptive,orabona2015generalized,orabona2018scale,alacaoglu2020new} have become popular in the machine learning community in recent years. Given the gradient vectors $g_1,\ldots, g_t$ received at iteration $t$, the core idea of these algorithms is to set the stepsizes proportional to $\frac{1}{\sqrt{\sum_{s=1}^{t-1}\dualnorm{g_s}^2}}$ to ensure a regret upper bounded by $\mathcal{O}(\sqrt{\sum_{t=1}^{T}\dualnorm{g_t}^2})$ after $T$ iterations. Online learning algorithms with this adaptive regret can be directly applied to the stochastic optimisation problems \citep{li2019convergence,alacaoglu2020new} or can be converted into a stochastic algorithm \citep{cesa2008improved} with a convergence rate $\mathcal{O}(\frac{1}{\sqrt{T}})$. This rate can be further improved to $\mathcal{O}(\frac{1}{T^2})$ for unconstrained problems with smooth loss functions by applying the acceleration techniques \citep{levy2018online,kavis2019unixgrad,cutkosky2019anytime}. These acceleration techniques do not require prior knowledge about the smoothness of the loss function and a guarantee convergence rate of $\mathcal{O}(\frac{1}{\sqrt{T}})$ for non-smooth functions. \cite{joulani2020simpler} has proposed a simple approach to accelerate optimistic online optimisation algorithms with adaptive regret bound.

Given a $d$-dimensional problem, the algorithms mentioned above have a regret upper bound depending (sub-) linearly on $d$. We are interested in a logarithmic regret dependence on the dimensionality, which can be attained by the $\textbf{EG}$ family algorithms \citep{kivinen1997exponentiated,warmuth2007winnowing,arora2012multiplicative} and their adaptive optimistic extension \citep{steinhardt2014adaptivity}. However, these algorithms work only for decision sets in the form of cross-polytopes and require prior knowledge about the radius of the decision set for general convex optimisation problems. The $p$-norm algorithm \citep{gentile2003robustness,kakade2012regularization} does not have the limitation mentioned above; however, it still requires prior knowledge about the problem to attain optimal performance \citep{orabona2015generalized}. The \textbf{HU} algorithm \citep{ghai2020exponentiated}, which interpolates gradient descent and \textbf{EG}, can theoretically be applied to loss functions with elastic net regularisers and decision sets other than cross-polytopes. However, it is not practical due to the complex projection step. 

Following the idea of \textbf{HU}, we propose more practical algorithms interpolating \textbf{EG} and the $p$-norm algorithm. The core of our algorithm is a symmetric logarithmic function. \cite{orabona2013dimension} first introduced the idea of composing the single-dimensional symmetric logarithmic function and a norm to generalise \textbf{EG} to the infinite-dimensional space. It has become popular for parameter-free optimisation \citep{cutkosky2016online,cutkosky2017online,cutkosky2017stochastic,kempka2019adaptive} since one can easily construct an adaptive regulariser with this composition \citep{cutkosky2017online}. In this paper, instead of using the composition, we apply the symmetric logarithmic function directly to each entry of a vector to construct a symmetric entropy-like function that is strongly convex with respect to the $\ell_1$ norm. We analyse \textbf{MD} and \textbf{FTRL} with the entropy-like function in the framework developed in \cite{joulani2017modular}. The analysis of the spectral analogue of the entropy-like function follows the idea proposed in \cite{ghai2020exponentiated}. 

\section{Preliminary}
The focus of this paper is \textbf{OCO} with the decision variable taken from a compact convex subset $\cK \subseteq \mathbb{X}$ of finite dimensional vector space equipped with a norm $\norm{\cdot}$. Given a sequence of vectors $\{v_t\}$, we use the compressed-sum notation $v_{1:t}= \sum_{s=1}^tv_s$ for simplicity. We denote by $\mathbb{X}_*$ the dual space with the dual norm $\norm{\cdot}_*$. The bi-linear map combining vectors in $\mathbb{X}_*$ and $\mathbb{X}$ is denoted by 
\[
\inner{\cdot}{\cdot}:\mathbb{X}_*\times \mathbb{X}\to\mathbb{R}, (\theta, x)\mapsto \theta x.
\]
For $\mathbb{X}=\Rd$, we denote by $\norm{\cdot}_1$ the $\ell_1$ norm, the dual norm of which is the maximum norm denoted by $\norm{\cdot}_\infty$. It is well known that the $\ell_2$ norm denoted by $\norm{\cdot}_2$ is self-dual. In case $\mathbb{X}$ is the space of the matrices, for simplicity, we also use $\norm{\cdot}_1$, $\norm{\cdot}_2$ and $\norm{\cdot}_\infty$ for the nuclear, Frobenius and spectral norm, respectively. 

Let $\sigma:\mathbb{R}^{m,n}\to \mathbb{R}^{\min\{m,n\}}$ be the function mapping a matrix to its singular values. Define 
\[
\diag:\mathbb{R}^{\min\{m,n\}}\to \mathbb{R}^{m,n}, x\mapsto X
\]
with 
    \[
    X_{ij}=       \begin{cases}
            x_i, &\text{if }  i=j \\ 
            0, & \text{otherwise}. \\
            \end{cases}
    \]
Clearly, the singular value decomposition (\textbf{SVD}) of a matrix $X$ can be expressed as 
\[
X=U\diag(\sigma(X))V^\top.
\]
Similarly, we write the eigendecomposition of a symmetric matrix $X$ as
\[
X=U\diag(\lambda(X))U^\top,
\]
where we denote by $\lambda:\mathbb{S}^d\mapsto \Rd$ the function mapping a symmetric matrix to its spectrum.

Given a convex set $\cK \subseteq \mathbb{X}$ and a convex function $f:\cK\to\mathbb{R}$ defined on $\cK$, we denote by $\partial f(y)=\{g\in \mathbb{X}_*\lvert\forall y\in \cK .f(x)-f(y)\geq\inner{g}{x-y}\}$ the subgradient of $f$ at $y$. We refer to $\grad f(y)$ any element in $\partial f(y)$. A function is $\eta$-strongly convex with respect to $\norm{\cdot}$ over $\cK$ if
\[
f(x)-f(y)\geq \inner{\grad f(y)}{x-y}+\frac{\eta}{2}\norm{x-y}^2 
\]
holds for all $x,y\in \cK$ and $\grad f(y)\in\partial f(y)$. 

\section{Algorithms and Analysis}
In this section, we present and analyse our algorithms, which begins with a short review on \textbf{EG} and the $p$-norm algorithm for the case $f_t= l_t$. 
The \textbf{EG} algorithm can be considered as an instance of \textbf{OMD}, the update rules of which is given by
\[
x_{t+1,i}\propto\exp( \ln(x_{t,i})-\frac{1}{\eta}g_{t,i}),
\]
where $g_t\in\partial f_t(x_t)$ is the subgradient, and $\eta> 0$ is the stepsize. Although the algorithm has the expected logarithmic dependence on the dimensionality, its update rule is applicable only to the decision variables on the standard simplex. For the problem with decision variables taken from an $\ell_1$ ball $\{x\lvert\norm{x}_1\leq D\}$, one can apply the $\textbf{EG}^\pm$ trick, i.e. use the vector $[\frac{D}{2}g_t^\top,-\frac{D}{2}g_t^\top]^\top$ to update $[x_{t+1,+}^\top,x_{t+1,-}^\top]^\top$ at iteration $t$ and choose the decision variable $x_{t+1,+}-x_{t+1,-}$. However, if the decision set is implicitly given by a regularisation term, the parameter $D$ has to be tuned. Since applying an overestimated $D$ increases regret, while using an underestimated $D$ decreases the freedom of the model, the algorithm is sensitive to tuning. For composite objectives, \textbf{EG} is not practical due to its update rule.

Compared to \textbf{EG}, the $p$-norm algorithm, the update rule of which is given by
\[
\begin{split}
y_{t+1,i}=&\sgn(x_{t,i})\abs{x_{t,i}}^{p-1}\norm{x_t}_{p}^{\frac{2}{p-1}}-\frac{1}{\eta}g_{t,i}\\
x_{t+1,i}=&\sgn(y_{t+1,i})\abs{y_{t+1,i}}^{q-1}\norm{y_{t+1}}_{q}^{\frac{2}{q-1}},\\
\end{split}
\]
is better applicable for unknown $D$.  
To combine the ideas of \textbf{EG} and the $p$-norm algorithm, we consider the following generalised entropy function
\begin{equation}
\label{eq:entropy}
\scfi: \mathbb{R}\to \mathbb{R}, x\mapsto \alpha (\abs{x}+\beta)\ln(\frac{\abs{x}}{\beta}+1)-\alpha\abs{x}.
\end{equation}
In the next lemma, we show the twice differentiability and strict convexity of $\scfi$, based on which a strongly convex potential function for \textbf{OMD} in a compact decision set can be constructed.
\begin{lemma}
\label{lemma:property}
$\scfi$ is twice continuous differentiable and strictly convex with
\begin{enumerate}
    \item $\scfi'(x)=\alpha \ln (\frac{\abs{x}}{\beta}+1)\sgn(x)$
    \item $\scfi''(x)=\frac{\alpha}{\abs{x}+\beta}$.
\end{enumerate}
Furthermore, the convex conjugate given by $\scfi^*:\mathbb{R}\to\mathbb{R}, \theta\mapsto\alpha \beta \exp\frac{\abs{\theta}}{\alpha}-\beta\abs{\theta}-\alpha\beta$
is also twice continuous differentiable with
\begin{enumerate}
    \item $\scfi ^{*\prime}(\theta)=(\beta \exp\frac{\abs{\theta}}{\alpha}-\beta)\sgn(\theta)$
    \item $\scfi ^{*\prime\prime}(\theta)=\frac{\beta}{\alpha} \exp\frac{\abs{\theta}}{\alpha}.$
\end{enumerate}
\end{lemma}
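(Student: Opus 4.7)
The main step is to handle the absolute value carefully so that smoothness at the origin is not lost. Since $\scfi$ is even, I would first establish all properties on the open half-line $x>0$ by direct differentiation of
\[
\scfi(x)=\alpha(x+\beta)\ln\!\left(\frac{x}{\beta}+1\right)-\alpha x,
\]
using the product rule to obtain $\scfi'(x)=\alpha\ln(x/\beta+1)$ after the cancellation $\alpha(x+\beta)\cdot\frac{1/\beta}{x/\beta+1}-\alpha=0$, and then differentiating once more to get $\scfi''(x)=\alpha/(x+\beta)$. By the symmetry $\scfi(-x)=\scfi(x)$, the corresponding expressions on $x<0$ follow, yielding the claimed sign-dependent formula for $\scfi'$ and the sign-free formula for $\scfi''$.

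The delicate point, and the only real obstacle, is twice continuous differentiability at $x=0$. I would handle this by taking one-sided limits: $\lim_{x\downarrow 0}\scfi'(x)=\alpha\ln(1)=0=\lim_{x\uparrow 0}\scfi'(x)$, so $\scfi'$ extends continuously with $\scfi'(0)=0$; and $\lim_{x\to 0}\scfi''(x)=\alpha/\beta$, independent of the side, so $\scfi''$ extends continuously. Combined with the explicit evaluation of the difference quotients of $\scfi$ and of $\scfi'$ at the origin (both of which match these limits by l'H\^opital or by direct Taylor expansion), this gives the $C^2$ property globally. Strict convexity is then immediate from $\scfi''(x)=\alpha/(\abs{x}+\beta)>0$.

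For the conjugate, I would not compute the supremum in $\scfi^*(\theta)=\sup_x(\theta x-\scfi(x))$ from scratch. Instead, since $\scfi$ is strictly convex, coercive and $C^2$ on $\mathbb{R}$, the optimiser is the unique $x^*(\theta)$ solving $\scfi'(x^*)=\theta$; solving $\alpha\ln(\abs{x^*}/\beta+1)\sgn(x^*)=\theta$ gives $x^*(\theta)=\sgn(\theta)\beta(\exp(\abs{\theta}/\alpha)-1)$. Substituting this into $\theta x^*-\scfi(x^*)$ and simplifying (using that $\ln(\abs{x^*}/\beta+1)=\abs{\theta}/\alpha$, so the logarithmic term becomes algebraic) recovers the closed form $\alpha\beta\exp(\abs{\theta}/\alpha)-\beta\abs{\theta}-\alpha\beta$. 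The derivatives of $\scfi^*$ can then either be read off from the envelope identity $\scfi^{*\prime}(\theta)=x^*(\theta)$ and $\scfi^{*\prime\prime}(\theta)=1/\scfi''(x^*(\theta))$, or, just as easily, verified by repeating the same case analysis ($\theta>0$, $\theta<0$, $\theta=0$) that was used for $\scfi$, since $\scfi^*$ has exactly the same even structure and the same type of absolute value. The one-sided limits at $\theta=0$ again agree ($\scfi^{*\prime}(0)=0$ and $\scfi^{*\prime\prime}(0)=\beta/\alpha$), closing the $C^2$ claim for the conjugate.
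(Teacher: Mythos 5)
Your proposal is correct, and its overall skeleton matches the paper's: differentiate directly away from the origin, treat $x=0$ separately, and obtain the conjugate by solving the first-order condition $\scfi'(x^*)=\theta$ and substituting back (your simplification using $\ln(\abs{x^*}/\beta+1)=\abs{\theta}/\alpha$ is exactly the computation the paper performs). Two steps are handled by genuinely different means, though. At the origin, the paper sandwiches the difference quotients of $\scfi$ and $\scfi'$ between explicit bounds obtained from the elementary inequalities $\ln x\leq x-1$ and $\ln x\geq 1-\tfrac{1}{x}$, whereas you compute the one-sided limits of $\scfi'$ and $\scfi''$ and close the gap via l'H\^opital, the mean value theorem, or a Taylor expansion; both are rigorous, the paper's being more self-contained and yours requiring you to actually say which of the three tools you invoke (the limit of the derivative alone does not by itself give differentiability at the point). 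For the second derivative of the conjugate, the paper repeats the difference-quotient argument for $\scfi^{*\prime}$ at $\theta=0$ using exponential bounds, while you read off $\scfi^{*\prime}=(\scfi')^{-1}$ and $\scfi^{*\prime\prime}(\theta)=1/\scfi''(x^*(\theta))$ from the envelope and inverse-function identities; since $\scfi'$ is $C^1$ with everywhere positive derivative and is surjective onto $\mathbb{R}$, this is legitimate and actually buys you the $C^2$ regularity of $\scfi^*$ with no further limit computation, at the cost of importing a small amount of standard convex-analysis machinery that the paper's fully elementary proof avoids.
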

Since we can expand the natural logarithm as $\ln(\frac{\abs{x}}{\beta}+1)=\frac{\abs{x}}{\beta}-\frac{\abs{x}^2}{2\beta^2}+\frac{\abs{x}^3}{3\beta^3}-\ldots$,  $\scf(x)$ can be intuitively considered as an interpolation between the absolute value and square. As observed in Figure \ref{fig:reg}, it is closer to the absolute value compared to the hyperbolic entropy introduced in \cite{ghai2020exponentiated}. 
Moreover, running \textbf{OMD} with regulariser $x\mapsto \sum_{i=1}^d\scf(x_i)$ yields an update rule 
\[
\begin{split}
y_{t+1,i}=& \sgn(x_{t,i})\ln(\frac{\abs{x_{t,i}}}{\beta}+1)-\frac{1}{\alpha}g_{t,i}\\
x_{t+1,i}=&\sgn(y_{t+1,i})(\beta\exp(\abs{y_{t+1,i}})-\beta),\\
\end{split}
\]
which sets the signs of coordinates like the $p$-norm algorithm and updates the scale similarly to \textbf{EG}. As illustrated in Figure \ref{fig:map}, the mirror map $\grad \dualscf$ is close to the mirror map of \textbf{EG}, while the behavior of \textbf{HU} is more similar to the gradient descent update.
\begin{figure}
\centering
\begin{subfigure}{.5\textwidth}
  \centering
  \includegraphics[width=\linewidth]{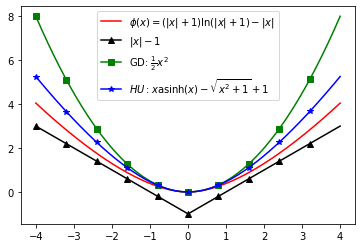}
  \caption{Regularisers}%
\label{fig:reg}
\end{subfigure}%
\begin{subfigure}{.5\textwidth}
  \centering
  \includegraphics[width=\linewidth]{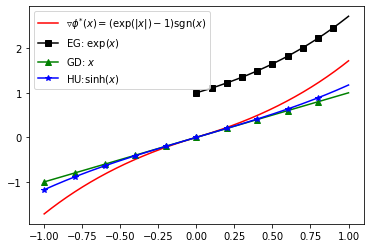}
\caption{Mirror Maps}%
\label{fig:map}
\end{subfigure}
\caption{Comparison of Convex Regularisers}
\label{fig:comp_reg}
\end{figure}

\subsection{Algorithms in the Euclidean Space}
To obtain an adaptive and optimistic algorithm, we define the following time varying function 
\begin{equation}
\label{eq:reg_t}
\sct{t}:\Rd\to \mathbb{R}, x\mapsto \alpha_t \sum_{i=1}^d((\abs{x_i}+\beta)\ln(\frac{\abs{x_i}}{\beta}+1)-\abs{x_i}),
\end{equation}
and apply it to the adaptive optimistic \textbf{OMD} (\textbf{AO-OMD}) given by
\begin{equation}
\label{eq:omd}
\begin{split}
    x_{t+1} &=\argmin_{x\in\cK}\inner{g_t-h_t+h_{t+1}}{x}+\comp_{t+1}(x)+\bd{\sct{t+1}}{x}{x_t}
\end{split}
\end{equation}
for the sequence of subgradients $\sequ{g}$ and hints $\sequ{h}$. 
In a bounded domain, $\sct{t}$ is strongly convex with respect to $\norm{\cdot}_1$, which is shown in the next lemma. 
\begin{lemma}
\label{lem:covex-rd}
Let $\cK\subseteq \Rd$ be convex and bounded such that $\norm{x}_1\leq D$ for all $x\in \cK$. Then we have for all $x,y\in \cK$
\[
\sct{t}(x)-\sct{t}(y)\geq \grad\sct{t}(y)^\top(x-y)+\frac{\alpha_t}{D+d\beta}\norm{x-y}_1^2.
\]
\end{lemma}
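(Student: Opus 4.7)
The plan is to derive the inequality from the explicit Hessian of $\sct{t}$ and then convert the resulting quadratic form into an $\ell_1$ bound by a weighted Cauchy--Schwarz step. Because $\sct{t}$ is coordinate-separable and, by Lemma~\ref{lemma:property}, each summand is twice continuously differentiable, the Hessian is diagonal:
\[
\nabla^2 \sct{t}(z) = \alpha_t\operatorname{diag}\!\Bigl(\tfrac{1}{|z_1|+\beta},\ldots,\tfrac{1}{|z_d|+\beta}\Bigr),
\]
so for every direction $u\in\Rd$ and every $z\in\Rd$,
\[
u^\top\nabla^2\sct{t}(z)u = \alpha_t\sum_{i=1}^d\frac{u_i^2}{|z_i|+\beta}.
\]

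The key step is a weighted Cauchy--Schwarz inequality with weights $\sqrt{|z_i|+\beta}$:
\[
\norm{u}_1^2 = \Bigl(\sum_i \tfrac{|u_i|}{\sqrt{|z_i|+\beta}}\cdot\sqrt{|z_i|+\beta}\Bigr)^{\!2} \leq \Bigl(\sum_i \tfrac{u_i^2}{|z_i|+\beta}\Bigr)\Bigl(\sum_i (|z_i|+\beta)\Bigr).
\]
Since $\cK$ is convex, the segment $z(s)=y+s(x-y)$ lies in $\cK$ for every $s\in[0,1]$, so $\norm{z(s)}_1\leq D$ and the second factor is at most $D+d\beta$. Rearranging gives the pointwise lower bound
\[
u^\top\nabla^2\sct{t}(z(s))u \geq \frac{\alpha_t}{D+d\beta}\norm{u}_1^2
\]
for $u=x-y$ and every $s\in[0,1]$.

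The remaining step is Taylor's theorem with integral remainder, applied along the segment from $y$ to $x$:
\[
\sct{t}(x)-\sct{t}(y)-\grad\sct{t}(y)^\top(x-y) = \int_0^1 (1-s)(x-y)^\top \nabla^2\sct{t}(y+s(x-y))(x-y)\,ds.
\]
Plugging in the pointwise lower bound and integrating against the weight $(1-s)$ yields a constant multiple of $\frac{\alpha_t}{D+d\beta}\norm{x-y}_1^2$, matching the lemma. The only non-routine ingredient is recognising that the correct weighting for Cauchy--Schwarz is $\sqrt{|z_i|+\beta}$, which is precisely what aligns the quadratic form with $\norm{\cdot}_1$ and isolates the $\ell_1$-radius bound on $\cK$ as the relevant scale; everything else (the explicit Hessian, convexity of the segment, Taylor integration) is bookkeeping.
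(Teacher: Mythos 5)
Your argument is essentially the paper's own: the paper likewise writes down the diagonal Hessian $\grad^2\sct{t}(x)=\alpha_t\diag(1/(\abs{x_1}+\beta),\ldots,1/(\abs{x_d}+\beta))$, applies the weighted Cauchy--Schwarz inequality with weights $\sqrt{\abs{x_i}+\beta}$ to get $v^\top\grad^2\sct{t}(x)v\geq\frac{\alpha_t}{D+d\beta}\norm{v}_1^2$, and then simply asserts that this ``leads clearly to the strong convexity''; you merely spell out the Taylor step that the paper leaves implicit. One point to flag: integrating the Hessian bound against the weight $(1-s)$ produces the factor $\int_0^1(1-s)\,ds=\tfrac{1}{2}$, so what your argument actually delivers is $\sct{t}(x)-\sct{t}(y)\geq\grad\sct{t}(y)^\top(x-y)+\frac{\alpha_t}{2(D+d\beta)}\norm{x-y}_1^2$, and your phrase ``a constant multiple of \dots matching the lemma'' glosses over the fact that the constant is $\tfrac{1}{2}$ while the lemma's display has no $\tfrac{1}{2}$. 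This factor-of-two discrepancy is real (take $d=1$, $y=D$, $x=D-\epsilon$: the left side behaves like $\frac{\alpha_t\epsilon^2}{2(D+\beta)}$), but it is inherited from the statement itself --- the paper's proof establishes exactly the same Hessian bound and, under its own definition of $\eta$-strong convexity, yields modulus $\frac{\alpha_t}{D+d\beta}$, i.e.\ the inequality with the extra $\tfrac{1}{2}$ --- so it is a slip in the lemma rather than a flaw specific to your approach.
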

With the property of the strong convexity, the regret of \textbf{AO-OMD} with regulariser~\eqref{eq:reg_t} can be analysed in the framework of optimistic algorithm \citep{joulani2017modular} and is upper bounded by the following theorem.
\begin{theorem}
\label{thm:regret-omd}
Let $\cK\subseteq \Rd$ be a compact convex set. Assume that there is some $D>0$ such that $\norm{x}_1\leq D$ holds for all $x\in\cK$. Let $\sequ{x}$ be the sequence generated by update rule~\eqref{eq:omd} with regulariser~\eqref{eq:reg_t}. Setting $\beta=\frac{1}{d}$, $\eta=\sqrt{\frac{1}{\ln(D+1)+\ln d}}$, and $\alpha_t=\eta\sqrt{\sum_{s=1}^{t-1}\norm{g_s-h_s}^2_\infty}$, we obtain
\[
\begin{split}
\mathcal{R}_{1:T}\leq &\comp_1(x_1)+c(d,D)\sqrt{\sum_{t=1}^T\norm{g_t-h_t}_\infty^2} 
\end{split}
\]
for some $c(d,D)\in \mathcal{O}(D\sqrt{\ln (D+1)+\ln d})$.
\end{theorem}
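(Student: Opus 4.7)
The plan is to instantiate the adaptive optimistic \textbf{OMD} regret lemma of \cite{joulani2017modular} with the time-varying regulariser $\msct{t}$ defined in~\eqref{eq:reg_t} and then plug in the strong convexity statement of Lemma~\ref{lem:covex-rd}. Since the modular framework accommodates both composite terms $\comp_t$ and optimistic hints $h_t$ directly, applying it to~\eqref{eq:omd} should yield a decomposition of the form
\[
\mathcal{R}_{1:T}\leq \comp_1(x_1)+\msct{T+1}(x^*)-\msct{1}(x_1) +\sum_{t=1}^{T}\Bigl(\inner{g_t-h_t}{x_t-x_{t+1}}-\bd{\msct{t}}{x_{t+1}}{x_t}\Bigr),
\]
where $x^*$ is the minimiser of $\sum_t f_t$ over $\cK$. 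The first step is to write this out carefully, checking that the regularisers $\comp_t$ cancel in the telescoping (a standard feature of the modular framework) and that the non-adaptive terms combine into $\msct{T+1}(x^*)$.

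Next I would handle the per-round penalty. Lemma~\ref{lem:covex-rd} gives that $\msct{t}$ is $\frac{\alpha_t}{D+d\beta}$-strongly convex with respect to $\norm{\cdot}_1$ on $\cK$, so by the standard Fenchel-Young type inequality,
\[
\inner{g_t-h_t}{x_t-x_{t+1}}-\bd{\msct{t}}{x_{t+1}}{x_t}\leq \frac{D+d\beta}{2\alpha_t}\norm{g_t-h_t}_\infty^2.
\]
Summing over $t$, substituting $\beta=1/d$ so that $D+d\beta=D+1$, and using the stepsize $\alpha_t=\eta\sqrt{\sum_{s<t}\norm{g_s-h_s}_\infty^2}$, the classical inequality $\sum_t a_t/\sqrt{\sum_{s\leq t}a_s}\leq 2\sqrt{\sum_t a_t}$ bounds this contribution by $\frac{D+1}{\eta}\sqrt{\sum_{t=1}^T\norm{g_t-h_t}_\infty^2}$ up to a constant.

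The third step is to control $\msct{T+1}(x^*)$. Using the elementary bound $\scfi(x)\leq (\abs{x}+\beta)\ln(\abs{x}/\beta+1)$ and monotonicity of $\ln$,
\[
\sum_{i=1}^d \scfi(x^*_i)\leq (\norm{x^*}_1+d\beta)\ln(D/\beta+1)\leq (D+1)\bigl(\ln(D+1)+\ln d\bigr),
\]
where the last step uses $Dd+1\leq (D+1)d$. Hence $\msct{T+1}(x^*)\leq \alpha_{T+1}(D+1)(\ln(D+1)+\ln d) = \eta(D+1)(\ln(D+1)+\ln d)\sqrt{\sum_t\norm{g_t-h_t}_\infty^2}$. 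Combining the two sources of growth and optimising over $\eta$ gives the stated choice $\eta=1/\sqrt{\ln(D+1)+\ln d}$ and, balancing, yields $c(d,D)\in\mathcal{O}(D\sqrt{\ln(D+1)+\ln d})$.

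The only place requiring care is the telescoping of $\msct{t+1}(x^*)-\msct{t}(x_t)$ and the cancellation of the composite terms in the modular bound, since $\alpha_t$ changes with $t$; this relies on $\msct{t}\geq 0$ everywhere on $\cK$ (clearly true since $\scfi\geq 0$) so that one may upper bound $\msct{t+1}(x^*)-\msct{t}(x^*)$ by the telescopic sum leading to $\msct{T+1}(x^*)$. The rest of the argument is routine manipulation of adaptive sums and substitution of the tuned constants.
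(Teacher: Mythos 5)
Your overall route is the same as the paper's: instantiate the adaptive optimistic \textbf{OMD} bound from the modular framework, use Lemma~\ref{lem:covex-rd} to turn the stability term into $\mathcal{O}\bigl(\frac{D+d\beta}{\alpha_{t+1}}\norm{g_t-h_t}_\infty^2\bigr)$, sum with the standard adaptive-sum inequality, and balance against the regulariser-growth term via the choice of $\eta$. That part is fine (modulo the harmless notational slip of writing $\msct{t}$ for $\sct{t}$).

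The one genuine gap is the form of the regulariser-growth term. For \textbf{OMD} the telescoping does \emph{not} produce $\sct{T+1}(x^*)-\sct{1}(x_1)$; it produces
\[
\bd{\sct{1}}{x^*}{x_1}+\sum_{t=1}^{T}\bigl(\bd{\sct{t+1}}{x^*}{x_t}-\bd{\sct{t}}{x^*}{x_t}\bigr)
=\bd{\sct{1}}{x^*}{x_1}+\sum_{t=1}^{T}(\alpha_{t+1}-\alpha_t)\,\bd{\psi}{x^*}{x_t},
\]
where $\psi$ is the base regulariser with $\alpha=1$. The term you propose is the \textbf{FTRL}/dual-averaging-style one, and the non-negativity of $\sct{t}$ does not convert one into the other: writing $\bd{\psi}{x^*}{x_t}-\psi(x^*)=\bigl(\inner{\grad\psi(x_t)}{x_t}-\psi(x_t)\bigr)-\inner{\grad\psi(x_t)}{x^*}$, the first bracket is non-negative for this $\psi$ (since $\beta\ln(\abs{x}/\beta+1)\leq\abs{x}$) and the second summand can also be positive, so $\bd{\psi}{x^*}{x_t}$ can strictly exceed $\psi(x^*)$. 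Consequently your Step~3, which bounds only $\sct{T+1}(x^*)$, does not control the quantity that actually appears. The repair is exactly the paper's Lemma~\ref{lemma:bd_upper}: bound the Bregman divergence uniformly over the iterates via $\bd{\psi}{x}{y}\leq 2D\bigl(\norm{\grad\psi(x)}_\infty+\norm{\grad\psi(y)}_\infty\bigr)\leq 4D(\ln(D+1)+\ln d)$ on the $\ell_1$ ball, which is of the same order as your bound on $\psi(x^*)$, so the final statement and the tuning of $\eta$ are unaffected --- but as written the decomposition you assert is not the one \eqref{eq:omd} yields, and the justification you offer for it does not close the step.
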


\textbf{EG} can also be considered as an instance of \textbf{FTRL} with a constant stepsize. The update rule of the adaptive optimistic \textbf{FTRL} (\textbf{AO-FTRL}) is given by  
\begin{equation}
\label{eq:ftrl}
\begin{split}
    x_{t+1} &=\argmin_{x\in\cK}\inner{g_{1:t}+h_{t+1}}{x}+\comp_{1:t+1}(x)+\bd{\sct{t+1}}{x}{x_1}.
\end{split}
\end{equation} 
The regret of \textbf{AO-FTRL} is upper bounded by the following theorem.
\begin{theorem}
\label{thm:regret-ftrl}
Let $\cK\subseteq \Rd$ be a compact convex set with $d>e$. Assume that there is some $D\geq 1$ such that $\norm{x}_1\leq D$ holds for all $x\in\cK\subseteq \Rd$. Let $\sequ{x}$ be the sequence generated by updating rule~\eqref{eq:ftrl} with regulariser~\eqref{eq:reg_t} at iteration $t$. Setting $\beta=\frac{1}{d}$, $\eta=\sqrt{\frac{1}{\ln (D+1)+\ln d}}$ and $\alpha_t=\eta\sqrt{\sum_{s=1}^{t-1}\norm{g_s-h_s}_\infty^2}$, we obtain 
\[
\begin{split}
\mathcal{R}_{1:T}\leq &c(d,D)\sqrt{\sum_{t=1}^T\norm{g_t-h_t}_\infty^2}\\
\end{split}
\]
for some $c(d,D)\in \mathcal{O}(D\sqrt{\ln (D+1)+\ln d})$.
\end{theorem}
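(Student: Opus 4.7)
The plan is to apply the optimistic--adaptive \textbf{FTRL} regret decomposition of \cite{joulani2017modular} and then specialise it to our regulariser using Lemma~\ref{lem:covex-rd}. Concretely, for strongly convex, non-decreasing adaptive regularisers $\msct{t}$ the framework yields a bound of the shape
\[
\mathcal{R}_{1:T}\;\leq\;\msct{T+1}(x^*)-\msct{1}(x_1)\;+\;\sum_{t=1}^T\Bigl(\inner{g_t-h_t}{x_t-x_{t+1}}-\bd{\msct{t+1}}{x_{t+1}}{x_t}\Bigr),
\]
for any comparator $x^*\in\cK$. Since $\msct{t}(x)=\alpha_t\sum_{i=1}^d\scfi(x_i)$ with $\scfi\geq 0$ and $\alpha_t$ non-decreasing, the sequence is monotone in $t$, so the framework applies verbatim. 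The first step is therefore to write down this decomposition and to discard $-\msct{1}(x_1)\leq 0$ (in fact $\alpha_1=0$).

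The second step is to bound the per-round ``stability'' term $\inner{g_t-h_t}{x_t-x_{t+1}}-\bd{\msct{t+1}}{x_{t+1}}{x_t}$ using the strong convexity of $\msct{t+1}$. By Lemma~\ref{lem:covex-rd} with $\beta=\frac{1}{d}$, $\msct{t+1}$ is $\frac{\alpha_{t+1}}{D+1}$-strongly convex with respect to $\norm{\cdot}_1$ over $\cK$, so Fenchel--Young gives
\[
\inner{g_t-h_t}{x_t-x_{t+1}}-\bd{\msct{t+1}}{x_{t+1}}{x_t}\;\leq\;\frac{(D+1)\norm{g_t-h_t}_\infty^2}{2\alpha_{t+1}}.
\]
Substituting $\alpha_{t+1}=\eta\sqrt{\sum_{s=1}^t\norm{g_s-h_s}_\infty^2}$ and applying the standard telescoping inequality $\sum_{t=1}^T a_t/\sqrt{\sum_{s\leq t}a_s}\leq 2\sqrt{\sum_{t=1}^T a_t}$ with $a_t=\norm{g_t-h_t}_\infty^2$ collapses the stability sum to $\frac{D+1}{\eta}\sqrt{\sum_{t=1}^T\norm{g_t-h_t}_\infty^2}$.

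Third, the ``penalty'' term $\msct{T+1}(x^*)$ is controlled by plugging $\|x^*\|_1\leq D$ and $\beta=\frac{1}{d}$ into the definition of $\scfi$: using $\abs{x_i^*}\leq D$ and summing the inequality $(\abs{x_i^*}+\tfrac{1}{d})\ln(d\abs{x_i^*}+1)-\abs{x_i^*}\leq (\abs{x_i^*}+\tfrac{1}{d})\ln(d(D+1))$ gives $\msct{T+1}(x^*)\leq \alpha_{T+1}(D+1)(\ln(D+1)+\ln d)$. Combining the two contributions and choosing $\eta=1/\sqrt{\ln(D+1)+\ln d}$ balances the penalty and stability terms so that both scale as $D\sqrt{(\ln(D+1)+\ln d)\sum_{t=1}^T\norm{g_t-h_t}_\infty^2}$, which is the claimed bound.

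The main obstacle I expect is the careful accounting of indices and edge cases: Lemma~\ref{lem:covex-rd} gives a strong convexity modulus proportional to $\alpha_{t+1}$, which is needed to control the round-$t$ stability term, but $\alpha_1=0$ means the round-$0$ regulariser is degenerate. This has to be handled either by noting that the FTRL template's round-$t$ stability term uses $\msct{t+1}$ (so $\alpha_1=0$ is harmless), or by routinely absorbing the first term into a constant. A secondary, purely technical nuisance is verifying that $\ln(dD+1)\leq\ln d+\ln(D+1)$ so that the penalty bound has exactly the advertised $\mathcal{O}(\ln(D+1)+\ln d)$ form; the restriction $d>e$ and $D\geq 1$ in the theorem hypothesis is there precisely to make such constant-factor estimates clean.
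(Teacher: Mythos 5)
Your overall skeleton (penalty plus stability, strong convexity from Lemma~\ref{lem:covex-rd}, telescoping, balancing via $\eta$) matches the paper, and the proposal does lead to the claimed bound, but you resolve the crucial step differently from the paper. The paper's Proposition~\ref{prop:ftrl} is derived through a conjugate-duality argument that produces a round-$t$ stability term controlled by the strong convexity of $\sct{t}$, i.e.\ by $\alpha_t\propto\sqrt{\sum_{s<t}\norm{g_s-h_s}_\infty^2}$, which excludes the current gradient and vanishes at $t=1$; to make the sum telescope the paper therefore combines $\frac{1}{2\eta_t}\dualnorm{g_t-h_t}^2$ with the diameter bound $2D\dualnorm{g_t-h_t}$ into $\frac{2D\dualnorm{g_t-h_t}^2}{\sqrt{16D^2\eta_t^2+\dualnorm{g_t-h_t}^2}}$, and this is precisely where the hypotheses $D\geq 1$ and $d>e$ are consumed (to show the denominator dominates $\sqrt{\sum_{s\leq t}\norm{g_s-h_s}_\infty^2}$) — not, as you guess, in the penalty bookkeeping. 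You instead assert the $(t{+}1)$-indexed stability term $-\bd{\sct{t+1}}{x_{t+1}}{x_t}$, which telescopes immediately and makes the whole min-trick unnecessary. That indexing is correct for FTRL in the strong-FTRL-lemma form (the round-$t$ stability is governed by the objective defining $x_{t+1}$, and here $\alpha_{t+1}$ is allowed to depend on $g_t$), but it is exactly the off-by-one that the paper's own derivation does \emph{not} deliver, so you cannot cite ``the framework'' and move on: you need to verify that the optimistic-FTRL decomposition you wrote holds for update~\eqref{eq:ftrl}, which in particular uses $\sct{t+1}\geq\sct{t}$ pointwise (true here since $\alpha_t$ is non-decreasing and $\scfi\geq 0$) and requires handling the fact that the incremental regulariser $\bd{\sct{t+1}}{\cdot}{x_1}-\bd{\sct{t}}{\cdot}{x_1}$ is centred at $x_1$ rather than at $x_t$, which contributes an extra term of the same order as the penalty, bounded via Lemma~\ref{lemma:bd_upper}. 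Two further minor discrepancies: the actual penalty for update~\eqref{eq:ftrl} is $\bd{\sct{T+1}}{x^*}{x_1}$ rather than $\sct{T+1}(x^*)$ (they coincide for $x_1=0$ and are both $O(D\alpha_{T+1}(\ln(D+1)+\ln d))$ by Lemma~\ref{lemma:bd_upper}), and Lemma~\ref{lem:covex-rd} gives modulus $\frac{2\alpha_{t+1}}{D+1}$, so your stability constant is off by a factor of two — neither affects the $\mathcal{O}(\cdot)$ claim.
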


\subsection{Spectral Algorithms}
We now consider the setting in which the decision variables are matrices taken from a compact convex set $\cK\subseteq \mathbb{R}^{m,n}$. A direct attempt to solve this problem is to apply the updating rule \eqref{eq:omd} or \eqref{eq:ftrl} to the vectorised matrices. A regret bound of $\mathcal{O}(D\sqrt{T\ln(mn)})$ can be guaranteed if the $\ell_1$ norm of the vectorised matrices from $\cK$ are bounded by $D$, which is not optimal. In many applications, elements in $\cK$ are assumed to have bounded nuclear norm, for which the regulariser 
\begin{equation}
\label{eq:reg_mtx}
\msct{t}=\sct{t}\circ \sigma    
\end{equation}
can be applied. The next theorem gives the strong convexity of $\msct{t}$ with respect to $\norm{\cdot}_1$ over $\cK$, which allows us to use $\{\msct{t}\}$ as the potential functions in \textbf{OMD} and \textbf{FTRL}.
\begin{theorem}
\label{thm:convex-mat}
Let $\sigma:\mathbb{R}^{m,n}\to \Rd$ be the function mapping a matrix to its singular values. Then the function $\msct{t}=\sct{t}\circ\sigma$ is $\frac{\alpha_t}{2(D+\min\{m,n\}\beta)}$-strongly convex with respect to the nuclear norm over the nuclear ball with radius $D$. 
\end{theorem}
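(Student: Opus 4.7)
The plan is to follow the spectral lifting strategy used in \cite{ghai2020exponentiated} and reduce the matrix strong-convexity inequality to the scalar counterpart already proved in Lemma~\ref{lem:covex-rd}. First I would observe that $\sct{t}(x)=\alpha_t\sum_{i=1}^d\scfi(x_i)$ is absolutely symmetric: each summand depends on its coordinate only through $\abs{x_i}$, so the function is invariant under arbitrary permutations and sign changes of its arguments. This places $\msct{t}=\sct{t}\circ\sigma$ within Lewis's theory of spectral functions, which yields that $\msct{t}$ is convex, that its gradient at $X=U\diag(\sigma(X))V^\top$ takes the form $U\diag(\grad\sct{t}(\sigma(X)))V^\top$, and that the nuclear ball of radius $D$ is mapped by $\sigma$ into the $\ell_1$-ball of radius $D$ in $\mathbb{R}^{\min\{m,n\}}$, so the hypothesis of Lemma~\ref{lem:covex-rd} is available for the singular value vectors of every $X,Y\in\cK$.

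Next I would transfer the strong convexity via duality. By the standard Fenchel correspondence, the scalar bound in Lemma~\ref{lem:covex-rd} (applied in dimension $\min\{m,n\}$) is equivalent to a Lipschitz-gradient bound on $\dualsct{t}$ with respect to the $\ell_\infty$ norm. The classical spectral Lipschitz-smoothness transfer, used for essentially the same purpose in \cite{kakade2012regularization} and \cite{ghai2020exponentiated}, then implies that $\dualmsct{t}=\dualsct{t}\circ\sigma$ inherits a Lipschitz gradient with respect to the spectral norm on the matrix dual domain with the same constant. Dualising once more yields strong convexity of $\msct{t}$ with respect to the nuclear norm with the claimed constant; the factor of two in the denominator of the theorem absorbs the standard conversion between the two conventions plus the unavoidable loss incurred when one compares the Bregman divergence of $\msct{t}$ evaluated at matrices against the Bregman divergence of $\sct{t}$ evaluated at their singular value vectors.

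The main obstacle is this spectral transfer itself, i.e.\ showing that Lipschitz smoothness of $\dualsct{t}$ lifts from $\ell_\infty$-bounded vectors to spectrally bounded matrices without loss of constant. This is where one invokes the Lewis--Sendov formula for the Hessian of a spectral function to rewrite the matrix quadratic form $\grad^2\dualmsct{t}(\Theta)[H,H]$ as a weighted sum involving the singular values of $\Theta$ and the entries of $H$ expressed in the singular basis of $\Theta$; the absolute symmetry of $\dualsct{t}$ together with the inequality $\norm{\sigma(H)}_\infty\leq\norm{H}_\infty$ then closes the argument. Everything else, including the absolute-symmetry check and the application of Lemma~\ref{lem:covex-rd}, reduces to routine verification.
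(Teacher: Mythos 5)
Your high-level route --- pass to the conjugate, establish smoothness of the spectral lift, and dualise back --- is the same as the paper's, but the step you yourself single out as the main obstacle is sketched in a way that does not close. The quantity to control is $D^2\dualmsct{t}(\Theta)(H,H)=\sum_{i,j}\gamma(\scf^{*\prime},\Theta)_{ij}\tilde h_{ij}^2$, where the off-diagonal coefficients are divided differences $\frac{\scf^{*\prime}(\lambda_i)-\scf^{*\prime}(\lambda_j)}{\lambda_i-\lambda_j}$, not values of $\scf^{*\prime\prime}$; the inequality $\norm{\sigma(H)}_\infty\leq\norm{H}_\infty$ says nothing about these cross terms. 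The paper handles them by bounding $\gamma_{ij}\leq\scf^{*\prime\prime}(\lambda_i)+\scf^{*\prime\prime}(\lambda_j)$ via the mean value theorem and the \emph{convexity} of $\scf^{*\prime\prime}(\theta)=\frac{\beta}{\alpha_t}e^{\abs{\theta}/\alpha_t}$, which decouples the double sum into $2\sum_i\scf^{*\prime\prime}(\lambda_i(\Theta))\sum_j\tilde h_{ij}^2$, and then invokes von Neumann's trace inequality. That step costs an honest factor of $2$, so the transfer is emphatically not ``without loss of constant''; the $2$ in the denominator of the theorem is produced here, not by a normalisation convention. Relatedly, the Lewis--Sendov Hessian formula you invoke applies to eigenvalue functions on $\mathbb{S}^d$; to use it for a singular-value composition on rectangular matrices the paper first lifts through the symmetric dilation $S(X)=\begin{bmatrix}0&X\\X^\top&0\end{bmatrix}$, whose spectrum is $\pm\sigma_i(X)$ (this is where evenness of $\scf$ is actually needed), and converts back at the end using $\norm{S(X)}_1=2\norm{X}_1$. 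Your proposal omits the dilation entirely.

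The appeal to the ``standard Fenchel correspondence'' is also too strong: $\dualsct{t}$ is not globally Lipschitz-smooth with respect to $\norm{\cdot}_\infty$, since $\scf^{*\prime\prime}$ grows exponentially. What holds, and what the paper isolates as Lemma~\ref{lemma:Duality}, is a \emph{pointwise} duality: a bound $D^2\dualmscf(\theta)(v,v)\leq\dualnorm{v}^2$ at the particular point $\theta=D\mscf(x)$ yields $D^2\mscf(x)(y,y)\geq\norm{y}^2$ at $x$, and the smoothness bound at such $\theta$ is available only because $\scf^{*\prime\prime}(\lambda_i(\Theta))=\frac{\abs{\lambda_i(S(X))}+\beta}{\alpha_t}$ sums to at most $\frac{2(D+\min\{m,n\}\beta)}{\alpha_t}$ over the nuclear ball. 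For the same reason, ``reducing to Lemma~\ref{lem:covex-rd} applied to singular value vectors'' is not a shortcut: strong convexity of $\sct{t}$ along vectors of singular values does not transfer to matrix segments, because $\sigma$ is not affine. The skeleton of your argument is the right one, but the divided-difference bound, the symmetric dilation, and the local duality lemma each need to be supplied before the proof is complete.
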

The proof of Theorem \ref{thm:convex-mat} follows the idea introduced in \cite{ghai2020exponentiated}. Define the operator 
\[
S:\mathbb{R}^{m,n}\to \mathbb{S}^{m+n}, X\mapsto \begin{bmatrix}
0 & X\\
X^\top & 0
\end{bmatrix}
\]
The set $\mathcal{X}=\{S(X)\lvert\in\mathbb{R}^{m,n}\}$ is a finite dimensional linear subspace of the space of symmetric matrices $\mathbb{S}^{m+n}$. Its dual space $\mathcal{X}_*$ determined by the Frobenius inner product can be represented by $\mathcal{X}$ itself. For any $S(X)\in\mathcal{X}$, the set of eigenvalues of $S(X)$ consists of the singular values and the negative singular values of $X$. Since $\scf$ is even, we have $\sum_{i=1}^d\scf(\sigma_i(X))=\sum_{i=1}^d\scf(\lambda_i(X))$ for symmetric $X$. The next lemma shows that both $\msct{t}\lvert_\mathcal{X}$ and $\dualmsct{t}\lvert_\mathcal{X}$ are twice differentiable.
\begin{lemma}
\label{lemma:D2}
Let $f:\mathbb{R}\to \mathbb{R}$ be twice continuously differentiable. Then the function given by
\[
F:\mathbb{S}^{d}\to \mathbb{R}, X\mapsto \sum_{i=1}^d f(\lambda_i(X))
\]
is twice differentiable. Furthermore, let $X\in\mathbb{S}^d$ be a symmetric matrix with eigenvalue decomposition \[X=U\diag(\lambda_1(X),\ldots,\lambda_d(X))U^\top.\]
Define the matrix of the divided difference $\Gamma(f,X)=[\gamma(f,X)_{ij}]$ with 
\[
\gamma(f,X)_{ij}=\begin{cases}
    \frac{f(\lambda_i(X))-f(\lambda_j(X))}{\lambda_i(X)-\lambda_j(X)},& \text{if }\lambda_i(X)\neq \lambda_j(X)\\
    f'(\lambda_i(X)),              & \text{otherwise}
\end{cases}
\]
Then for any $G,H\in\mathbb{S}^d$, we have
\[
D^2F(X)(G,H)=\sum_{i.j}\gamma(f',X)_{ij}\tilde{g}_{ij}\tilde{h}_{ij},\\
\]
where $\tilde{g}_{ij}$ and $\tilde{h}_{ij}$ are the elements of the $i$-th row and $j$-th column of the matrix $U^\top G U$ and $U^\top H U$, respectively. 
\end{lemma}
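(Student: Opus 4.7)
The plan is to recognise $F$ as the spectral trace function $F(X)=\tr(f(X))$, where $f$ is extended to $\mathbb{S}^d$ via the functional calculus, and then to apply the Daleckii--Krein theorem twice. Since $X=U\diag(\lambda(X))U^\top$, we have $f(X)=U\diag(f(\lambda_1(X)),\ldots,f(\lambda_d(X)))U^\top$ and hence $\tr(f(X))=\sum_{i=1}^d f(\lambda_i(X))=F(X)$. This identification is the starting point because spectral functions composed with the trace are known to inherit the smoothness of the underlying scalar function $f$, so twice continuous differentiability of $F$ on $\mathbb{S}^d$ follows immediately from that of $f$ on $\mathbb{R}$.

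For the first derivative I would invoke the classical formula
\[
DF(X)(G)=\tr\bigl(f'(X)\,G\bigr),
\]
which is a direct consequence of the Daleckii--Krein representation of the Fréchet derivative of matrix-valued spectral functions together with the trace's invariance under the derivative operator. The key ingredient to then obtain the second derivative is the Daleckii--Krein formula applied to the matrix-valued map $X\mapsto f'(X)$: in the eigenbasis of $X$, its Fréchet derivative in direction $H$ has entries given by the Hadamard product
\[
\bigl(U^\top Df'(X)(H)\,U\bigr)_{ij}=\gamma(f',X)_{ij}\,\tilde h_{ij},
\]
where $\gamma(f',X)_{ij}$ is precisely the divided-difference quantity defined in the statement, equal to $f''(\lambda_i(X))$ on the diagonal and when eigenvalues coincide.

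Combining these two pieces, I differentiate $G\mapsto \tr(f'(X)G)$ with respect to $X$ along $H$, treating $G$ as fixed:
\[
D^2F(X)(G,H)=\tr\bigl(Df'(X)(H)\cdot G\bigr).
\]
Rewriting this trace in the eigenbasis of $X$ using the Daleckii--Krein formula and the cyclic property of the trace gives
\[
D^2F(X)(G,H)=\tr\bigl((\Gamma(f',X)\odot \tilde H)\,\tilde G\bigr)=\sum_{i,j}\gamma(f',X)_{ij}\,\tilde h_{ij}\,\tilde g_{ji},
\]
where $\tilde G=U^\top G U$ and $\tilde H=U^\top H U$. Since $G,H\in\mathbb{S}^d$, both $\tilde G$ and $\tilde H$ are symmetric, so $\tilde g_{ji}=\tilde g_{ij}$, yielding the claimed expression.

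The main technical obstacle is the behaviour at matrices $X$ with repeated eigenvalues: the eigenprojections are not differentiable there, yet the divided-difference matrix $\Gamma(f',X)$ is continuous by construction, being extended by $f''(\lambda_i(X))$ on the diagonal. The Daleckii--Krein theorem is the correct tool because it produces a formula that is simultaneously well-defined on all of $\mathbb{S}^d$ and agrees with the naive differentiation of the eigenvalues whenever the spectrum is simple; I would therefore state the Daleckii--Krein theorem explicitly (or cite it) as the single non-elementary input, and the rest of the proof reduces to bookkeeping in the eigenbasis.
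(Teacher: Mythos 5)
Your proposal is correct and follows essentially the same route as the paper: both identify $F=\tr\circ f$ via the functional calculus, obtain $DF(X)(G)=\tr(f'(X)G)$, and then apply the Daleckii--Krein divided-difference formula (the paper cites it as Theorem V.3.3 of Bhatia) a second time to the matrix map $X\mapsto f'(X)$, finishing with the same trace/Hadamard-product bookkeeping in the eigenbasis of $X$. The only cosmetic differences are the order in which the two directions $G,H$ are differentiated and your explicit use of the symmetry of $\tilde G$, which the paper leaves implicit.
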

Lemma \ref{lemma:D2} implies the unsurprising positive semidefiniteness of $D^2F(X)$ for convex $f$.  
Furthermore, the exact expression of the second differential allows us to show the local smoothness of $\dualmsct{t}$ using the local smoothness of $\dualscf$. Together with Lemma \ref{lemma:Duality}, the locally strong convexity of $\msct{t}\lvert_\mathcal{X}$ can be proved. 
\begin{lemma}
\label{lemma:Duality}
Let $\mscf:\mathbb{X}\to \mathbb{R}$ be a closed convex function such that $\dualmscf$ is twice differentiable at some $\theta\in\mathbb{X}_*$ with positive definite $D^2\dualmscf(\theta)\in\mathcal{L}(\mathbb{X}_*,\mathcal{L}(\mathbb{X}_*,\mathbb{R}))$. Suppose that $D^2\dualmscf(\theta)(v,v)\leq \dualnorm{v}^2$ holds for all $v\in\mathbb{X}_*$. Then we have $D^2\mscf(D\dualmscf(\theta))(x,x)\geq \norm{x}^2$ for all $x\in\mathbb{X}$.
\end{lemma}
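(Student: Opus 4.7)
The plan is to exploit the classical Fenchel duality between the Hessians of $\mscf$ and $\dualmscf$: namely, to show $D^2\mscf(x)$ is the inverse of $D^2\dualmscf(\theta)$ at conjugate points, and then to conclude via a Cauchy--Schwarz argument in the inner product induced by $D^2\dualmscf(\theta)$. Set $x = D\dualmscf(\theta)$. Throughout I will use the canonical identification $\mathbb{X}_{**}\cong\mathbb{X}$ to view $H := D^2\dualmscf(\theta)$ as a self-adjoint, positive definite linear operator $H:\mathbb{X}_*\to\mathbb{X}$, so that the bilinear form $(v,w)\mapsto H(v,w)=\inner{v}{Hw}$ is a genuine inner product on $\mathbb{X}_*$.

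The first substantive step is the Hessian inversion identity. Since $\mscf$ is closed convex we have $\mscf=\mscf^{**}$, so the Fenchel--Young equality gives $\theta\in\partial\mscf(x)$ and the gradient maps $D\mscf$ and $D\dualmscf$ are mutual inverses wherever both are defined and differentiable. Because $H$ is positive definite, the inverse function theorem applied to $D\dualmscf$ at $\theta$ produces a local inverse that is differentiable at $x$; by Fenchel duality this local inverse must agree with $D\mscf$, so $\mscf$ is twice differentiable at $x$. Differentiating the identity $D\mscf(D\dualmscf(\theta)) = \theta$ then yields
\[
D^2\mscf(x) = \bigl[D^2\dualmscf(\theta)\bigr]^{-1} = H^{-1}:\mathbb{X}\to\mathbb{X}_*
\]
as linear operators.

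The second step uses Cauchy--Schwarz in the $H$-inner product. For arbitrary $v\in\mathbb{X}_*$ and $y\in\mathbb{X}$, note that $\inner{v}{y}=\inner{v}{HH^{-1}y}=H(v,H^{-1}y)$, so
\[
\inner{v}{y}^2 \;\le\; H(v,v)\cdot H(H^{-1}y,H^{-1}y) \;=\; D^2\dualmscf(\theta)(v,v)\cdot D^2\mscf(x)(y,y),
\]
using $H(H^{-1}y,H^{-1}y)=\inner{H^{-1}y}{y}=H^{-1}(y,y)$ in the last equality. Invoking the hypothesis $D^2\dualmscf(\theta)(v,v)\le\dualnorm{v}^2$ and taking the supremum over $v$ with $\dualnorm{v}\le 1$ on the left collapses $\sup_v\inner{v}{y}^2$ to $\norm{y}^2$ by the definition of the dual norm, yielding $\norm{y}^2\le D^2\mscf(x)(y,y)$ as desired.

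The main obstacle is the first step: rigorously extracting twice differentiability of $\mscf$ at $x$ (and the clean Hessian-inverse formula) from the single-point twice-differentiability hypothesis on $\dualmscf$. This is a standard but delicate consequence of local inversion under a positive definite Hessian combined with conjugacy; once it is granted, the bound itself is a one-line Cauchy--Schwarz calculation and the dual-norm supremum.
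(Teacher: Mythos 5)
Your proof is correct, and its core step --- the Hessian inversion $D^2\mscf(D\dualmscf(\theta)) = \bigl[D^2\dualmscf(\theta)\bigr]^{-1}$ at conjugate points --- is exactly the paper's as well (the paper obtains it by applying the chain rule to $D\dualmscf\circ D\mscf = I$ rather than by the inverse function theorem, but it is the same identity). Where you diverge is the final step: the paper introduces the quadratic form $\psi_\theta(v)=\tfrac12 D^2\dualmscf(\theta)(v,v)$, computes its convex conjugate explicitly as $\psi_\theta^*(x)=\tfrac12 D^2\mscf(D\dualmscf(\theta))(x,x)$, and then invokes the order-reversal of conjugation (citing an external reference) together with the fact that $\tfrac12\dualnorm{\cdot}^2$ and $\tfrac12\norm{\cdot}^2$ are conjugate; you instead run Cauchy--Schwarz in the inner product induced by $H=D^2\dualmscf(\theta)$ and take the supremum defining the dual norm. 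These are two renderings of the same computation --- the identity $\sup_v\{\inner{v}{x}-\tfrac12 H(v,v)\}=\tfrac12 H^{-1}(x,x)$ underlies both --- but yours is self-contained and avoids the citation, which is a mild gain. One shared caveat: you rightly flag that twice differentiability of $\mscf$ at $x=D\dualmscf(\theta)$ does not follow from the single-point hypothesis without some care (the pointwise inverse function theorem needs continuity of the inverse gradient map near $x$); the paper's proof silently assumes $D^2\mscf(v)$ exists when it differentiates $D\dualmscf\circ D\mscf=I$. Neither wrinkle matters in the application, since Lemma~\ref{lemma:D2} gives global twice differentiability of both $\msct{t}\vert_{\mathcal{X}}$ and $\dualmsct{t}\vert_{\mathcal{X}}$.
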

Lemma $\ref{lemma:Duality}$ can be considered as a generalised version of the local duality of smoothness and convexity proved in \cite{ghai2020exponentiated}. The required positive definiteness of $D^2\dualmsct{t}(\theta)$ is guaranteed by the exact expression of the second differential described in Lemma \ref{lemma:D2} and the fact $\scf^{*\prime\prime}(\theta)> 0$ for all $\theta \in\mathbb{R}$. Finally, using the construction of $\mathcal{X}$, the locally strong convexity of $\msct{t}\lvert_\mathcal{X}$ can be extended to $\msct{t}$. The complete proofs of Theorem \ref{thm:convex-mat} and the technical lemmata can be found in Appendix \ref{appendix:thm:convex-mat}.

With the property of the strong convexity, the regret of applying \eqref{eq:reg_mtx} to \textbf{AO-OMD} and \textbf{AO-FTRL} can be upper bounded by the following theorems. 
\begin{theorem}
\label{thm:regret-mat}
Let $\cK\subseteq \mathbb{R}^{m,n}$ be a compact convex set. Assume that there is some $D>0$ such that $\norm{x}_1\leq D$ holds for all $x\in\cK$. Let $\sequ{x}$ be the sequence generated by update rule~\eqref{eq:omd} with regulariser~\eqref{eq:reg_mtx} at iteration $t$. Setting $\beta=\frac{1}{\min\{m,n\}}$, $\eta=\sqrt{\frac{1}{\ln(D+1)+\ln \min\{m,n\}}}$, and $\alpha_t=\eta\sqrt{\sum_{s=1}^{t-1}\norm{g_s-h_s}^2_\infty}$, we obtain
\[
\begin{split}
\mathcal{R}_{1:T}\leq &\comp_1(x_1)+c(m,n,D)\sqrt{\sum_{t=1}^T\norm{g_t-h_t}_\infty^2} 
\end{split}
\]
with $c(m,n,D)\in\mathcal{O}(D\sqrt{\ln(D+1)+\ln \min\{m,n\}})$.
\end{theorem}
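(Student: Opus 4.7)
The plan is to mirror the argument behind Theorem~\ref{thm:regret-omd}, but replace the Euclidean strong-convexity guarantee from Lemma~\ref{lem:covex-rd} with the spectral one supplied by Theorem~\ref{thm:convex-mat}. Concretely, I would invoke the generic optimistic mirror-descent regret identity of Joulani et al.~(2017) for the update rule~\eqref{eq:omd} with adaptive regularisers $\{\msct{t+1}\}$, which expresses
\[
\mathcal{R}_{1:T}\le \comp_1(x_1)+\bd{\msct{T+1}}{x^*}{x_1}+\sum_{t=1}^T\Bigl(\inner{g_t-h_t}{x_t-x_{t+1}}-\bd{\msct{t}}{x_{t+1}}{x_t}\Bigr),
\]
for any comparator $x^*\in\cK$. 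The dual norm of the nuclear norm on $\mathbb{R}^{m,n}$ is the spectral norm, which in the paper's notation is exactly $\norm{\cdot}_\infty$, so everything is set up to plug in Theorem~\ref{thm:convex-mat}.

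First I would bound the per-step terms. By Theorem~\ref{thm:convex-mat}, $\msct{t}$ is $\frac{\alpha_t}{D+\min\{m,n\}\beta}$-strongly convex with respect to $\norm{\cdot}_1$ on $\cK$, so Fenchel--Young applied coordinate-wise in the dual pairing gives
\[
\inner{g_t-h_t}{x_t-x_{t+1}}-\bd{\msct{t}}{x_{t+1}}{x_t}\le \frac{D+\min\{m,n\}\beta}{2\alpha_t}\norm{g_t-h_t}_\infty^2.
\]
Summing and using the standard telescoping inequality $\sum_{t=1}^T \frac{a_t}{\sqrt{\sum_{s\le t}a_s}}\le 2\sqrt{\sum_{t=1}^T a_t}$ (with $a_t=\norm{g_t-h_t}_\infty^2$) then plugging in $\alpha_t=\eta\sqrt{\sum_{s<t}\norm{g_s-h_s}_\infty^2}$ yields a contribution of order $\frac{D+\min\{m,n\}\beta}{\eta}\sqrt{\sum_{t=1}^T\norm{g_t-h_t}_\infty^2}$.

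Next I would control the penalty term $\bd{\msct{T+1}}{x^*}{x_1}$. Choosing $x_1=0$ (the minimiser of $\msct{T+1}$ on $\cK$), this Bregman distance reduces to $\msct{T+1}(x^*)=\alpha_{T+1}\sum_{i}\scf(\sigma_i(x^*))$. Since the singular values of $x^*$ are non-negative with $\sum_i\sigma_i(x^*)\le D$ and since $t\mapsto(t+\beta)\ln(t/\beta+1)-t$ is non-negative and convex on $[0,\infty)$, the sum is maximised by concentrating all mass on a single singular value, which gives
\[
\msct{T+1}(x^*)\le \alpha_{T+1}(D+\beta)\ln\!\Bigl(\tfrac{D}{\beta}+1\Bigr).
\]
With $\beta=1/\min\{m,n\}$, this is at most $\alpha_{T+1}(D+1)(\ln(D+1)+\ln\min\{m,n\})$, so this term contributes $\eta(D+1)(\ln(D+1)+\ln\min\{m,n\})\sqrt{\sum_{t=1}^T\norm{g_t-h_t}_\infty^2}$. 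Substituting $\eta=1/\sqrt{\ln(D+1)+\ln\min\{m,n\}}$ balances the two contributions and yields the announced constant $c(m,n,D)\in\mathcal{O}(D\sqrt{\ln(D+1)+\ln\min\{m,n\}})$.

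The heavy lifting has already been done in Theorem~\ref{thm:convex-mat} (spectral strong convexity) and Lemma~\ref{lem:covex-rd}-style reasoning; what remains here is bookkeeping. The only genuine subtlety I anticipate is verifying that the optimistic--MD regret identity, which is typically stated for a fixed Bregman geometry, really does go through with the \emph{time-varying} spectral regulariser $\msct{t}$. I would handle this by noting that $\alpha_t$ is non-decreasing in $t$, so $\msct{t+1}-\msct{t}$ is a non-negative convex function attaining its minimum on $\cK$ at $0=x_1$, which is exactly the monotonicity hypothesis needed to carry the modular analysis of Joulani et al.~(2017) over from the vector to the spectral setting without additional loss.
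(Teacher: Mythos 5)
Your overall strategy---plug the spectral strong convexity of Theorem~\ref{thm:convex-mat} into the optimistic mirror-descent framework and balance the penalty and stability terms---is exactly the paper's route (the paper literally reuses Proposition~\ref{prop:omd} verbatim). However, your starting inequality contains a genuine error. For \textbf{OMD} with \emph{time-varying} regularisers the telescoping of $\bd{\msct{t+1}}{x}{x_t}-\bd{\msct{t+1}}{x}{x_{t+1}}$ leaves behind the residual
\[
\bd{\msct{1}}{x}{x_1}+\sum_{t=1}^{T}\bigl(\bd{\msct{t+1}}{x}{x_t}-\bd{\msct{t}}{x}{x_t}\bigr),
\]
a sum of Bregman divergences of the increments $\msct{t+1}-\msct{t}=(\alpha_{t+1}-\alpha_t)\,\msct{}$ centred at the \emph{iterates} $x_t$. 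This does not collapse to the single term $\bd{\msct{T+1}}{x}{x_1}$ that you write down. Your justification---that each increment is a nonnegative convex function minimised at $x_1=0$---is the hypothesis needed for the \textbf{FTRL} bound (update rule~\eqref{eq:ftrl}, where all divergences are centred at $x_1$); for \textbf{MD} you would need the increments to be minimised at each $x_t$, which fails here since $x_t\neq 0$ in general. Consequently your bound $\msct{T+1}(x^*)\leq\alpha_{T+1}(D+\beta)\ln(D/\beta+1)$, obtained by evaluating the regulariser only at the comparator with centre $x_1=0$, does not control the terms centred at $x_2,\dots,x_T$. (As a side remark, $0$ need not even lie in $\cK$, and the theorem does not fix $x_1=0$.)

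The missing ingredient is the paper's Lemma~\ref{lemma:bd_upper}: a \emph{uniform} bound $\bd{\msct{}}{x}{y}\leq 4D(\ln(D+1)+\ln\min\{m,n\})$ valid for \emph{all} $x,y\in\cK$, proved via the gradient bound $\norm{\grad\msct{}(x)}_\infty\leq\ln(D+1)+\ln\min\{m,n\}$ on the nuclear ball together with H\"older's inequality. With that, each residual term is at most $(\alpha_{t+1}-\alpha_t)\cdot 4D(\ln(D+1)+\ln\min\{m,n\})$, the sum telescopes in $\alpha$, and the rest of your argument (the per-step Fenchel--Young bound using the dual pairing of nuclear and spectral norms, the adaptive-stepsize summation, and the choice of $\eta$) goes through and recovers the stated constant. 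So the gap is repairable and does not change the final order, but as written the key penalty term is bounded by an identity that does not hold for the algorithm in question.
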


\begin{theorem}
\label{thm:regret-ftrl-mat}
Let $\cK\subseteq \mathbb{R}^{\min\{m,n\}}$ be a compact convex set with $\min\{m,n\}>e$. Assume that there is some $D\geq 1$ such that $\norm{x}_1\leq D$ holds for all $x\in\cK$. Let $\sequ{x}$ be the sequence generated by updating rule~\eqref{eq:ftrl} with time varying regulariser~\eqref{eq:reg_mtx}. Setting $\beta=\frac{1}{\min\{m,n\}}$, $\eta=\sqrt{\frac{1}{\ln(D+1)+\ln \min\{m,n\}}}$ and $\alpha_t=\eta\sqrt{\sum_{s=1}^{t-1}\norm{g_s-h_s}_\infty^2}$, we obtain
\[
\begin{split}
\mathcal{R}_{1:T}\leq &c(m,n,D)\sqrt{\sum_{t=1}^T\norm{g_t-h_t}_\infty^2},\\
\end{split}
\]
with $c(m,n,D)\in\mathcal{O}(D\sqrt{\ln(D+1)+\ln \min\{m,n\}})$.
\end{theorem}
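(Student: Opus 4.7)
The plan is to mirror the proof of Theorem \ref{thm:regret-ftrl}, substituting nuclear/spectral norm duality for $\ell_1/\ell_\infty$ duality and invoking Theorem \ref{thm:convex-mat} in place of Lemma \ref{lem:covex-rd}. The argument fits directly into the modular framework for adaptive optimistic algorithms of \cite{joulani2017modular}: since $\msct{t}$ is strongly convex with respect to $\norm{\cdot}_1$ on the nuclear ball of radius $D$, AO-FTRL with regulariser $\msct{t}$ and composite terms $\comp_t$ yields a standard regret decomposition into a ``diameter'' term $\msct{T+1}(x^*) - \msct{T+1}(x_1)$ plus a sum of local norm terms $\sum_t \norm{g_t-h_t}_\infty^2 / \lambda_t$, where $\lambda_t$ is the strong-convexity modulus of $\msct{t}$.

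For the local-norm sum, Theorem \ref{thm:convex-mat} gives $\lambda_t = \alpha_t / (2(D + \min\{m,n\}\beta))$, which with the prescribed $\beta = 1/\min\{m,n\}$ simplifies to $\alpha_t/(2(D+1))$. The standard telescoping inequality $\sum_t a_t/\sqrt{\sum_{s<t} a_s} \leq 2\sqrt{\sum_t a_t}$ applied to $a_t = \norm{g_t-h_t}_\infty^2$ then collapses this sum to $\mathcal{O}((D+1)\eta^{-1}\sqrt{\sum_t \norm{g_t-h_t}_\infty^2})$. For the diameter term, I would bound $\msct{T+1}(x^*) = \alpha_{T+1}\sum_i \scf(\sigma_i(x^*))$ using $\sigma_i(x^*) \leq D$ together with $\sum_i (\sigma_i(x^*)+\beta) \leq D+1$, giving $\sum_i \scf(\sigma_i(x^*)) \leq (D+1)\ln(D\min\{m,n\}+1)$, which is $\mathcal{O}((D+1)(\ln(D+1)+\ln\min\{m,n\}))$. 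With $\eta = 1/\sqrt{\ln(D+1)+\ln\min\{m,n\}}$, the two contributions balance and produce the claimed $c(m,n,D) \in \mathcal{O}(D\sqrt{\ln(D+1)+\ln\min\{m,n\}})$.

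The main obstacle is making sure that the strong-convexity modulus from Theorem \ref{thm:convex-mat} is the correct one to feed into the Joulani framework, since that framework typically assumes the regulariser is strongly convex on the \emph{entire} domain, whereas Theorem \ref{thm:convex-mat} only guarantees strong convexity inside the nuclear ball of radius $D$. This is automatic here because the hypothesis $\norm{x}_1 \leq D$ on $\cK$ places every FTRL iterate $x_t$, as well as the comparator $x^*$, inside that ball. A secondary subtlety is that $\msct{t}$ is time-varying; one must check that $\alpha_t$ is non-decreasing so that the standard telescoping argument for time-varying regularisers (already used in Theorem \ref{thm:regret-ftrl}) applies unchanged. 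Beyond these bookkeeping points, the proof is a direct scalar-to-spectral lift: the spectral regulariser inherits both the modulus of strong convexity and the comparator bound from its scalar counterpart via the singular values, so no new spectral machinery is required beyond what was already developed for Theorem \ref{thm:convex-mat}.
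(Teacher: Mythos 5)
Your proposal follows essentially the same route as the paper: the paper's proof also reduces the spectral case to the scalar FTRL analysis by feeding the strong-convexity modulus from Theorem~\ref{thm:convex-mat} (which, as you note, becomes $\alpha_t/(2(D+1))$ after setting $\beta=1/\min\{m,n\}$) into the general AO-FTRL regret bound of Proposition~\ref{prop:ftrl}, telescopes the local-norm sum via Lemma~\ref{lemma:log}, and bounds the regulariser at the comparator (the paper uses $\bd{\msct{T+1}}{x}{x_1}\leq 4D(\ln(D+1)+\ln\min\{m,n\})$ from Lemma~\ref{lemma:bd_upper}, which is interchangeable with your direct estimate of $\msct{T+1}(x^*)$).

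One bookkeeping point deserves care: with $\alpha_1=\eta\sqrt{\sum_{s=1}^{0}\cdot}=0$, the first term of your local-norm sum $\sum_t\norm{g_t-h_t}_\infty^2/\lambda_t$ is ill-defined, and the inequality you quote, $\sum_t a_t/\sqrt{\sum_{s<t}a_s}\leq 2\sqrt{\sum_t a_t}$, fails at $t=1$ (the usable version has $\sum_{s\leq t}$ in the denominator). The paper avoids this by bounding each per-round term by $\min\{\tfrac{1}{2\eta_t}\dualnorm{g_t-h_t}^2,\,2D\dualnorm{g_t-h_t}\}\leq \tfrac{2D\dualnorm{g_t-h_t}^2}{\sqrt{16D^2\eta_t^2+\dualnorm{g_t-h_t}^2}}$ inside Proposition~\ref{prop:ftrl}, so the current gradient appears in the denominator and the $\alpha_1=0$ round is absorbed; you would need this (or an equivalent device, e.g.\ handling $t=1$ separately via the diameter bound) to make the telescoping step rigorous.
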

With regulariser~\eqref{eq:reg_mtx}, both \textbf{AO-OMD} and \textbf{AO-FTRL} guarantee a regret upper bound proportional to $\sqrt{\ln \min\{m,n\}}$, which is the best known dependence on the size of the matrices.

\section{Derived Algorithms}
Given $z_{t+1}\in\mathbb{X}_*$ and a time varying closed convex function $R_{t+1}:\cK\to \mathbb{R}$, we consider the following updating rule
\begin{equation}
\label{eq:omd:impl}
\begin{split}
    y_{t+1} &=\grad \dualsct{t+1}(z_{t+1})\\
    x_{t+1} &=\argmin_{x\in\cK}R_{t+1}(x)+\bd{\sct{t+1}}{x}{y_{t+1}}.
\end{split}
\end{equation}
It is easy to verify that \eqref{eq:omd:impl} is equivalent to 
\[
\begin{split}
x_{t+1}=&\argmin_{x\in\cK}R_{t+1}(x)+\bd{\sct{t+1}}{x}{y_{t+1}}\\
=&\argmin_{x\in\cK}R_{t+1}(x)+\sct{t+1}(x)-\inner{\grad\sct{t+1}(y_{t+1})}{x}\\
=&\argmin_{x\in\cK}R_{t+1}(x)+\sct{t+1}(x)-\inner{z_{t+1}}{x}\\
\end{split}
\]
Setting $z_{t+1}=\grad\sct{t+1}(x_t)-g_t+h_t-h_{t+1}$ and $R_{t+1}=\comp_{t+1}$, we obtain the \textbf{AO-OMD} update
\[
\begin{split}
x_{t+1}=&\argmin_{x\in \cK}\inner{g_t-h_t+h_{t+1}}{x}-\inner{\grad\sct{t+1}(x_t)}{x} +\sct{t+1}(x)+r_{t+1}(x)\\
=&\argmin_{x\in \cK}\inner{g_t-h_t+h_{t+1}}{x}+r_{t+1}(x)+\bd{\sct{t+1}}{x}{x_t}.\\
\end{split}
\]
Setting $z_{t+1}=-\grad \sct{t+1}(x_1)+g_{1:t}+h_{t+1}$ and $R_{t+1}=\comp_{1:t+1}$, we obtain the \textbf{AO-FTRL} update
\[
\begin{split}
x_{t+1}=&\argmin_{x\in \cK}\inner{g_{1:t}-\theta_1+h_{t+1}}{x}+\sct{t+1}(x)+r_{1:t+1}(x).\\
\end{split}
\]
The rest of this section focuses on solving the second line of \eqref{eq:omd:impl} for some popular choices of $\comp$ and $\cK$.

\subsection{Elastic Net Regularisation}
We first consider the setting of $\cK=\Rd$ and $R_{t+1}(x)=\gamma_1 \norm{x}_1+\frac{\gamma_2}{2}\norm{x}^2_2$, which has countless applications in machine learning. It is easy to verify that the Bregman divergence associated with $\psi_{t+1}$ is given by
\[
\begin{split}
\bd{\sct{t+1}}{x}{y}=&\alpha_{t+1}\sum_{i=1}^d((\abs{x_i}+\beta)\ln(\frac{\abs{x_i}}{\beta}+1)-\abs{x_i}\\
&-(\sgn(y_i)x_i+\beta)\ln(\frac{\abs{y_i}}{\beta}+1)+\abs{y_i}).
\end{split}
\]
The minimiser of 
\[
R_{t+1}(x)+\bd{\sct{t+1}}{x}{y_{t+1}}
\]
in $\Rd$ can be simply obtained by setting the subgradient to $0$. For $\ln(\frac{\abs{y_{i,t+1}}}{\beta}+1)\leq\frac{\gamma_1}{\alpha_{t+1}}$, we set $x_{i,t+1}=0$. Otherwise, the $0$ subgradient implies $\sgn(x_{i,t+1})=\sgn(y_{i,t+1})$ and $\abs{x_{i,t+1}}$ given by the root of
\[
\begin{split}
\ln(\frac{\abs{y_{i,t+1}}}{\beta}+1)=\ln(\frac{\abs{x_{i,t+1}}}{\beta}+1)+\frac{\gamma_1}{\alpha_{t+1}}+\frac{\gamma_2}{\alpha_{t+1}}\abs{x_{i,t+1}}
\end{split}
\]
for $i=1,\ldots, d$. 
For simplicity, we set $a=\beta$, $b=\frac{\gamma_2}{\alpha_{t+1}}$ and $c=\frac{\gamma_1}{\alpha_{t+1}}-\ln(\frac{\abs{y_{i,t+1}}}{\beta}+1)$. It can be verified that $\abs{x_{i,t+1}}$ is given by
\begin{equation}
\abs{x_{i,t+1}}=\frac{1}{b}W_0(ab\exp(ab-c))-a,
\end{equation}
where $W_0$ is the principal branch of the \textit{Lambert function} and can be well approximated.
For $\gamma_2=0$, i.e. the $\ell_1$ regularised problem, $\abs{x_{i,t+1}}$ has the closed form solution
\begin{equation}
\abs{x_{i,t+1}}=\beta\exp(\ln(\frac{\abs{y_{i,t+1}}}{\beta}+1)-\frac{\gamma_1}{\alpha_{t+1}})-\beta.
\end{equation}
The implementation is described in Algorithm~\ref{alg:reg}.
\begin{algorithm}
	\caption{Solving $\min _{x\in \Rd} R_{t+1}(x)+\bd{\sct{t+1}}{x}{y_{t+1}}$}
    \label{alg:reg}
	\begin{algorithmic}
	\For{$i=1,\ldots,d$}
	    \If{$\ln(\frac{\abs{y_{i,t+1}}}{\beta}+1)\leq\frac{\gamma_1}{\alpha_{t+1}}$}
	    \State $x_{t+1,i}\gets 0$
        \Else
        \State $a\gets\beta$
        \State $b\gets\frac{\gamma_2}{\alpha_{t+1}}$
        \State $c\gets\frac{\gamma_1}{\alpha_{t+1}}-\ln(\frac{\abs{{y}_{t+1,i}}}{\beta}+1)$
        \State $x_{t+1,i}\gets\frac{1}{b}W_0(ab\exp(ab-c))-a$
        \EndIf
    \EndFor
    \State Return $x_{t+1}$
    \end{algorithmic}
\end{algorithm}
\subsection{Nuclear and Frobenius Regularisation}
Similarly, we consider $\cK=\mathbb{R}^{m,n}$ with a regulariser $R_{t+1}(x)=\gamma_1 \norm{x}_1+\frac{\gamma_2}{2}\norm{x}^2_2$ mixed with the nuclear and Frobenius norm.
The second line of update rule \eqref{eq:omd:impl} can be implemented as follows
\begin{equation}
    \label{eq:nu_reg}
    \begin{split}
        \text{Compute SVD: }y_{t+1}=&U_{t+1}\diag(\tilde{y}_{t+1})V_{t+1}^\top\\
        \text{Apply Algorithm~\ref{alg:reg}: }\tilde{x}_{t+1}=&\argmin_{x\in \Rd} R_{t+1}(x)+\bd{\sct{t+1}}{x}{\tilde{y}_{t+1}}\\
        \text{Construct: }x_{t+1}=&U_{t+1}\diag(\tilde{x}_{t+1})V_{t+1}^\top.
    \end{split}
\end{equation}
Let $y_{t+1}$ and $\tilde{y}_{t+1}$ be as defined in \eqref{eq:nu_reg}. It is easy to verify
\begin{equation}
\label{eq:mtx:opt}
\begin{split}
&\argmin _{x\in\mathbb{R}^{m,n}} R_{t+1}(x)+\bd{\msct{t+1}}{x}{y_{t+1}}\\
=&\argmin _{x\in\mathbb{R}^{m,n}} R_{t+1}(x)+\msct{t+1}(x)-\inner{U_{t+1}\diag(\grad \sct{t+1}(\tilde{y}_{t+1}))V_{t+1}^\top}{x}_F.
\end{split}
\end{equation}
From the characterisation of subgradient, it follows
\[
\begin{split}
\grad R_{t+1}(x)=U \diag(\gamma_1 \sgn(\sigma(x))+\gamma_2 \sigma(x)) V^\top,
\end{split}
\]
and 
\[
\begin{split}
\grad\msct{t}(x)=U \diag(\grad\sct{t}(\sigma(x))) V^\top,
\end{split}
\]
where $x=U\diag(\sigma(x))V^\top$ is \textbf{SVD} of $x$. Similar to the case in $\Rd$, $\tilde{x}_{t+1}$ is the root of 
\[
\gamma_1 \sgn(\sigma(x))+\gamma_2 \sigma(x)+\grad\sct{t}(\sigma(x))=\grad\sct{t}(\tilde{y}_{t+1}).
\]
The subgradient of the objective~\eqref{eq:mtx:opt} at $x_{t+1}=U_{t+1}\diag(\tilde{x}_{t+1})V_{t+1}^\top$ is clearly $0$. 

\subsection{Projection onto the Cross-Polytope}
Next, we consider the setting where $\comp_t$ is the zero function and $\cK$ is the $\ell_1$ ball with radius $D$. Clearly, we simply set $x_{t+1}=y_{t+1}$ for $\norm{y_{t+1}}_1\leq D$. Otherwise, Algorithm~\ref{ALG:Proj} describes a sorting based procedure projecting $y_{t+1}$ onto the $\ell_1$ ball with time complexity $\mathcal{O}(d\log d)$.
The correctness of the algorithm is shown in the next lemma.

\begin{algorithm}
	\caption{$\operatorname{project}(y,D,\beta)$}
	\label{ALG:Proj}
\begin{algorithmic}
        \State Sort $\abs{y_i}$ to get the permutation $p$ such that $\abs{y_{p(i)}}\leq \abs{y_{p(i+1)}}$
        \State Define $\theta(j)=\abs{y_{p(j)}}(D+(d-j+1)\beta)+\beta D-\beta\sum_{i\geq j}\abs{y_{p(i)}}$
        \State $\rho\gets\min\{j\lvert\theta(j)>0\}$
        \State $z\gets\frac{\sum_{i=\rho}^d(\abs{y_{p(i)}}+\beta)}{D+(d-\rho+1)\beta}$
        \State $x^*_i\gets\max\{\frac{\abs{y_i}+\beta}{z}-\beta,0\}\sgn(y_i)$ for $i=1\ldots d$
    \State    Return $x^*$\;
\end{algorithmic}
\end{algorithm}

\begin{lemma} 
\label{lemma:proj}
Let $y\in \Rd$ with $\norm{y}_1>D$ and $x^*$ as returned by Algorithm~\ref{ALG:Proj}, then we have 
\[
 x^*\in\argmin_{x\in \cK}\bd{\psi_{t+1}}{x}{y}.
\]
\end{lemma}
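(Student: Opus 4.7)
The plan is a standard KKT analysis of a convex projection. First I would observe that, since $\psi_{t+1}$ is strictly convex, $B_{\psi_{t+1}}(\cdot,y)$ is strictly convex on $\mathbb{R}^d$ and its unconstrained minimiser is $y$. Because $\norm{y}_1>D$, the minimiser of the constrained problem is unique and must lie on the boundary $\norm{x}_1=D$. I would also dispose of the sign issue first: the Bregman divergence decouples coordinate-wise and, for a single coordinate, $B(x_i,y_i)\geq B(\sgn(y_i)|x_i|,y_i)$ with strict inequality when the signs differ, so at the optimum $\sgn(x^*_i)=\sgn(y_i)$. This reduces the problem to minimising over the simplex-like set $\{u\in\mathbb{R}_{\geq 0}^d : \sum_i u_i =D\}$ the function $\sum_i B_{\scf}(u_i,|y_i|)$.

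Next I would write the Lagrangian $L(u,\mu,\nu)=\sum_i B_{\scf}(u_i,|y_i|)+\mu(\sum_i u_i-D)-\sum_i \nu_i u_i$ with $\nu_i,\mu\geq 0$ and complementary slackness $\nu_i u_i=0$. Differentiating in $u_i$ and using $\scf'(|x|)=\alpha\ln(|x|/\beta+1)$ from Lemma~\ref{lemma:property} gives the stationarity condition
\[
\alpha\ln(u_i/\beta+1)-\alpha\ln(|y_i|/\beta+1)+\mu-\nu_i=0.
\]
Setting $z=\exp(\mu/\alpha)$ and solving, I would obtain the closed form $u_i=\max\{(|y_i|+\beta)/z-\beta,\,0\}$, with the max corresponding to whether the complementary-slackness multiplier $\nu_i$ is active. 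Since $\norm{y}_1>D$, the Lagrange multiplier must satisfy $\mu>0$, i.e. $z>1$, and $z$ is determined implicitly by the feasibility equation $\sum_{i: |y_i|+\beta>z\beta}((|y_i|+\beta)/z-\beta)=D$.

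To solve this equation I would sort the coordinates by $|y_i|$ ascending into $p$. The set of ``active'' coordinates where $u_i>0$ is necessarily an upper tail $\{p(\rho),\ldots,p(d)\}$ since $u_i$ is monotone in $|y_i|$. For a fixed threshold $\rho$, I would solve $\sum_{i\geq\rho}((|y_{p(i)}|+\beta)/z-\beta)=D$ for $z$, recovering exactly the value $z=\frac{\sum_{i=\rho}^d(|y_{p(i)}|+\beta)}{D+(d-\rho+1)\beta}$ used in the algorithm. The correct $\rho$ is then characterised by the requirement that $p(\rho)$ stays active while $p(\rho-1)$ is inactive, i.e. $(|y_{p(\rho)}|+\beta)>z\beta\geq(|y_{p(\rho-1)}|+\beta)$. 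Substituting the expression for $z$ and clearing denominators yields precisely $\theta(\rho)>0\geq\theta(\rho-1)$, where $\theta$ is the quantity defined in Algorithm~\ref{ALG:Proj}.

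The one step that needs a short argument — and the only place a reader might pause — is justifying that $\rho:=\min\{j:\theta(j)>0\}$ is well-defined and that this choice really gives the unique KKT point. For well-definedness I would compute
\[
\theta(j+1)-\theta(j)=(|y_{p(j+1)}|-|y_{p(j)}|)(D+(d-j)\beta)\geq 0,
\]
so $\theta$ is non-decreasing, and $\theta(d)=|y_{p(d)}|D+\beta D>0$ because $\norm{y}_1>D>0$ forces $|y_{p(d)}|>0$; hence the minimum exists. Monotonicity also implies that the upper-tail structure assumed for the active set is consistent with this choice of $\rho$. With $\rho$ and $z$ identified, the vector $x^*$ returned by Algorithm~\ref{ALG:Proj} satisfies the KKT conditions of a strictly convex program, and uniqueness of the optimiser completes the proof.
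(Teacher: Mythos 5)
Your proposal is correct and follows essentially the same route as the paper: fix the signs coordinate-wise, argue the $\ell_1$ constraint is active, reduce to a nonnegatively constrained problem with $\sum_i u_i = D$, apply Lagrange multipliers to obtain $u_i+\beta=(\abs{y_i}+\beta)/z$, and identify the support by sorting together with the monotone function $\theta$. The only differences are minor: you close the argument by invoking sufficiency of the KKT conditions for a strictly convex program, whereas the paper verifies optimality on the identified support directly via Jensen's inequality applied to $x\mapsto x\ln x$, and you supply a well-definedness check for $\rho$ (monotonicity of $\theta$ plus $\theta(d)>0$) that the paper leaves implicit.
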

For the case that $\cK\subseteq\mathbb{R}^{m,n}$ is the nuclear ball with radius $D$ and $\norm{y_{t+1}}_1> D$, we need to solve the problem
\[
\min_{x\in\cK} \msct{t+1}(x)-\inner{U_{t+1}\diag(\grad \sct{t+1}(\tilde{y}_{t+1}))V_{t+1}^\top}{x}_F,
\]
where the constant part of the Bregman divergence is removed. From the von Neumann's trace inequality, the Frobenius inner product is upper bounded by
\[
\inner{U_{t+1}\grad \sct{t+1}(\tilde{y}_{t+1})V_{t+1}^\top}{x}_F\leq \sigma(x)^\top\grad \sct{t+1}(\tilde{y}_{t+1}).
\]
The equality holds when $x$ and $U_{t+1}\grad \sct{t+1}(\tilde{y}_{t+1})V_{t+1}^\top$ share a simultaneous \textbf{SVD}, i.e. the minimiser has an \textbf{SVD} of the form
\[
x=U_{t+1}\diag(\grad \sigma(x))V_{t+1}^\top.
\]
Thus the problem is reduced to
\[
\begin{split}
\min_{x\in\mathbb{R}^{\min\{m,n\}}}& \sct{t+1}(x)-\grad \sct{t+1}(\tilde{y}_{t+1})^\top x\\
\textrm{s.t.} \quad & \sum_{i=1}^{\min\{m,n\}} x_i\leq D\\
  &x_i\geq 0 \textrm{ for all } i=1,\ldots,\min\{m,n\},    \\
\end{split}
\]
which can be solved by Algorithm~\ref{ALG:Proj}. Thus, the projection of update rule~\eqref{eq:omd:impl} can be implemented as follows
\begin{equation}
    \begin{split}
        \text{Compute SVD: }y_{t+1}=&U_{t+1}\diag(\tilde{y}_{t+1})V_{t+1}^\top\\
        \text{Apply Algorithm~\ref{ALG:Proj}: }\tilde{x}_{t+1}=&\operatorname{project}(\tilde{y}_{t+1},D,\beta)\\
        \text{Construct: }x_{t+1}=&U_{t+1}\diag(\tilde{x}_{t+1})V_{t+1}^\top.
    \end{split}
\end{equation}
\subsection{Stochastic Acceleration}
Finally, we consider the stochastic optimisation problem of the form
\[
\min_{x\in\cK} l(x)+\comp(x),
\]
where $l:\mathbb{X}\to\mathbb{R}$ and $\comp:\cK\to\mathbb{R}_{\geq 0}$ are closed convex functions. In the stochastic setting, instead of having a direct access to $\grad l$, we query a stochastic gradient $g_t$ of $l$ at $z_t$ in each iteration $t$ with $\ex{g_t\lvert z_t}\in\partial l(z_t)$. Algorithms with a regret bound of the form $\mathcal{O}(\sqrt{\sum_{t=1}^T\dualnorm{g_t-h_t}^2})$ can be easily converted into a stochastic optimisation algorithm by applying the update rule to the scaled stochastic gradient $a_tg_t$ and hint $a_{t+1}g_t$, which is described in Algorithm~\ref{ALG:acc}.
\begin{algorithm}
\caption{Stochastic Acceleration}\label{ALG:acc}
\begin{algorithmic}
	\State Input: optimistic algorithm $\mathcal{A}$, compact convex set $\cK$ and closed convex function $\comp$\;
	   \For {$t=1,\ldots,T$}
	    \State $a_t\gets t$
	    \State $x_t$ from $\mathcal{A}$
	    \State $z_t\gets\frac{a_t}{a_{1:t}}x_t+(1-\frac{a_t}{a_{1:t}})z_{t-1}$ 
	    \State Query $g_t$ such that $\ex{g_t\lvert z_t}\in\partial l(z_t)$
	    \State Update $\mathcal{A}$ with $\cK$, $\alpha_{t+1}\comp$, scaled subgradient $a_t g_t$ and hint $a_{t+1} g_{t}$
        \EndFor
       \State Return $x_{t+1}$
\end{algorithmic}
\end{algorithm}
\cite{joulani2020simpler} has shown the convergence of accelerating \textbf{Adagrad} for the problem in $\Rd$. We extend the result to any finite dimensional normed vector space in the following corollary.
\begin{corollary}
\label{cor:acc}
Let $(\mathbb{X},\norm{\cdot})$ be a finite dimensional normed vector space and $\cK\subseteq \mathbb{X}$ a compact convex set. Denote by $\mathcal{A}$ be some optimistic algorithm generating $x_t\in\cK$  at iteration $t$. Denote by \[
\nu_t^2=\ex{\dualnorm{g_t-\grad l_t(z_t)}^2\lvert z_t}
\] the variance. If $\mathcal{A}$ has a regret upper bound in the form of 
\[
c_1+c_2\sqrt{\sum_{t=1}^T\dualnorm{a_t(g_t-g_{t-1})}^2}
\]
then there is some $L>0$ such that the error incurred by Algorithm \ref{ALG:acc} is upper bounded by
\[
	\begin{split}
	\ex{f(z_{T})-f(x)}\leq&\frac{c_1+c_2\sqrt{8\sum_{t=1}^Ta_t^2(\nu_t^2+L^2)}}{a_{1:T}}.\\
	\end{split}
\]
Furthermore, if $l$ is $M$-smooth, then we have
\[
	\begin{split}
	\ex{f(z_{T})-f(x)}\leq&\frac{ c_1+c_2\sqrt{8\sum_{t=1}^Ta_t^2\nu_t^2}+\sqrt{2}c_2L+2Mc_2^2}{a_{1:T}}.\\
	\end{split}
\]
\end{corollary}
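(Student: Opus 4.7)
The plan is to adapt the stochastic-acceleration argument of \cite{joulani2020simpler} to the general normed-space setting. I will proceed in four steps.

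\emph{Step 1 (Online-to-batch by averaging).} Using the convexity of $l$ and $\comp$ together with the recursion $a_{1:t}z_t = a_t x_t + a_{1:t-1}z_{t-1}$, Jensen's inequality gives $a_{1:t}f(z_t) \leq a_t f(x_t) + a_{1:t-1}f(z_{t-1})$, and telescoping yields
\[
a_{1:T}\bigl(f(z_T) - f(x)\bigr) \;\leq\; \sum_{t=1}^{T} a_t\bigl(f(x_t) - f(x)\bigr).
\]

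\emph{Step 2 (Linearisation at $z_t$).} The subgradient inequality $l(x) \geq l(z_t) + \inner{\grad l(z_t)}{x - z_t}$, combined with the decomposition $l(x_t) - l(x) = (l(x_t) - l(z_t)) + (l(z_t) - l(x))$ and the identity $z_t - x_t = \tfrac{a_{1:t-1}}{a_{1:t}}(z_{t-1} - x_t)$, produces
\[
l(x_t) - l(x) \;\leq\; \inner{\grad l(z_t)}{x_t - x} + \Delta_t,
\]
where the non-negative linearisation remainder $\Delta_t := l(x_t) - l(z_t) - \inner{\grad l(z_t)}{x_t - z_t}$ is handled trivially via convexity in the non-smooth case and is upper bounded by $\tfrac{M}{2}\norm{x_t - z_t}^2$ when $l$ is $M$-smooth.

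\emph{Step 3 (Invoking the regret of $\mathcal{A}$).} The iterate $x_t$ and the anchor $z_t$ are measurable with respect to $\mathcal{F}_{t-1} = \sigma(g_1,\ldots,g_{t-1})$ and $\ex{g_t\mid\mathcal{F}_{t-1}} = \grad l(z_t)$, so the martingale-difference term $\inner{g_t - \grad l(z_t)}{x_t - x}$ has vanishing conditional mean. Applying the assumed regret bound to the scaled linear losses $\inner{a_t g_t}{\cdot} + a_t\comp(\cdot)$ with hints $a_{t+1}g_t$, taking expectations, and invoking Jensen's inequality $\ex{\sqrt{X}}\leq\sqrt{\ex{X}}$ gives
\[
\ex{\sum_{t=1}^T a_t\inner{\grad l(z_t)}{x_t - x} + a_t(\comp(x_t)-\comp(x))} \;\leq\; c_1 + c_2\sqrt{\ex{\sum_{t=1}^T\dualnorm{a_t(g_t - g_{t-1})}^2}}.
\]

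\emph{Step 4 (Controlling the gradient variation).} Decomposing $g_t = \grad l(z_t) + \delta_t$ with conditionally centred noise $\delta_t$ of variance $\nu_t^2$, the inequality $\dualnorm{u+v}^2\leq 2\dualnorm{u}^2 + 2\dualnorm{v}^2$ separates drift from noise. The noise increments $\delta_t - \delta_{t-1}$ are conditionally orthogonal, yielding an expected noise contribution of at most $8\sum_t a_t^2\nu_t^2$. The drift $\grad l(z_t) - \grad l(z_{t-1})$ is bounded by $2L$ pointwise in the non-smooth case (giving the first displayed bound with the summand $4a_t^2L^2$ inside the square root) and by $M\norm{z_t - z_{t-1}} = \tfrac{a_t M}{a_{1:t}}\norm{x_t - z_{t-1}}$ in the smooth case.

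\emph{Main obstacle.} The smooth case is the delicate part: a plain diameter bound on $\norm{x_t - z_{t-1}}$ only delivers $O(T^{-3/2})$ after the square root, falling short of the accelerated $O(1/T^2)$ promised by the $\tfrac{2Mc_2^2}{a_{1:T}}$ term. To recover the right rate, I first apply $\sqrt{A+B}\leq\sqrt{A}+\sqrt{B}$ to peel off the boundary contribution $a_1^2\dualnorm{g_1}^2\leq L^2$, which produces the $\sqrt{2}c_2 L$ summand. For the remaining drift mass $Y = 8M^2\sum\norm{x_t - z_{t-1}}^2$ I apply Young's inequality $c_2\sqrt{Y}\leq \tfrac{c_2^2}{4\lambda} + \lambda Y$ and choose $\lambda$ so that $\lambda Y$ is exactly absorbed by the smoothness remainders $\sum a_t\Delta_t$ carried out of Step 2; the algebraic identity $\tfrac{a_t^4}{a_{1:t}^2}\leq 4$ for $a_t=t$ makes the coefficients align, and the residual collapses to the claimed $2Mc_2^2$ constant. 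Separating the noise and drift contributions one last time via concavity of $\sqrt{\cdot}$ then delivers the second displayed bound.
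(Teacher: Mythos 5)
There is a genuine gap in Steps 1--2, and it undermines both displayed bounds. Your online-to-batch step first bounds $a_{1:T}(f(z_T)-f(x))$ by $\sum_{t=1}^T a_t(f(x_t)-f(x))$ via Jensen and then linearises each $f(x_t)-f(x)$ at $z_t$, which produces the remainder $\Delta_t=\bd{l}{x_t}{z_t}$ with a \emph{positive} sign on the upper-bound side. This is a Bregman divergence between the iterate $x_t$ and the running average $z_t$ --- two points that need not be close (only consecutive averages $z_{t-1},z_t$ contract, at rate $a_t/a_{1:t}$) --- so $\sum_t a_t\Delta_t$ can be of order $a_{1:T}$, making the resulting bound on $f(z_T)-f(x)$ vacuous. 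A nonnegative additive term in an upper bound cannot be "handled trivially via convexity"; it must be bounded by something summable. The smooth case fares no better: your Step 4 plan to absorb $\lambda Y$ "into the smoothness remainders $\sum_t a_t\Delta_t$ carried out of Step 2" has the sign backwards, since those remainders sit on the same side of the inequality as $\lambda Y$; one can only absorb a positive quantity into a \emph{negative} term, and your decomposition produces none.

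The paper's Proposition~\ref{prop:decompose} avoids this by never passing through $\sum_t a_t(f(x_t)-f(x))$. Using $a_t(x_t-z_t)=a_{1:t-1}(z_{t-1}-z_t)$ and the three-point expansion of $\inner{\grad l(z_t)}{z_t-z_{t-1}}$, it telescopes directly to
\[
a_{1:T}\ex{f(z_T)-f(x)}\leq \ex{\mathcal{R}_{1:T}}-\sum_{t=1}^T a_{1:t-1}\bd{l}{z_{t-1}}{z_t},
\]
so the Bregman terms that appear are between \emph{consecutive averages} and enter with a \emph{negative} sign. These negative terms are precisely what absorbs the smooth-case drift: by Lemma~\ref{lemma:smooth}, $2a_t^2\dualnorm{\grad l(z_t)-\grad l(z_{t-1})}^2\leq 4Ma_t^2\bd{l}{z_{t-1}}{z_t}$, the ratio $a_t^2/a_{1:t-1}$ is bounded for $a_t=t$, and then $2ab-a^2\leq b^2$ yields the residual $2Mc_2^2$; the non-smooth case needs no absorption at all because the negative terms can simply be dropped. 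Your Steps 3--4 are otherwise in the right spirit (the martingale argument, the $\dualnorm{u+v}^2\leq 2\dualnorm{u}^2+2\dualnorm{v}^2$ split, peeling off the $t=1$ term to get $\sqrt{2}c_2L$), though note that "conditional orthogonality" of the noise increments is not available in a general dual norm --- the paper simply pays the constant $4a_t^2(\nu_t^2+\nu_{t-1}^2)$ per round. To repair the proof, replace Steps 1--2 by the linear-coupling decomposition above.
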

Setting $\alpha_t=t$, we obtain a convergence of $\mathcal{O}(\frac{c_2}{\sqrt{T}})$ in general case, and $\mathcal{O}(\frac{c_2}{T^2}+\frac{c_2\max_{t}\nu_t}{\sqrt{T}})$ for smooth loss function. Applying update rule \eqref{eq:omd} or \eqref{eq:ftrl} with regulariser \eqref{eq:reg_t} or \eqref{eq:reg_mtx} to Algorithm \ref{ALG:acc}, the constant $c_2$ is proportional to $\sqrt{\ln d}$ and $\sqrt{\ln(\min\{m,n\})}$ for $\mathbb{X}=\Rd$ and $\mathbb{X}=\mathbb{R}^{m,n}$ respectively, while the accelerated \textbf{AdaGrad} has a linear dependence on the dimensionality \citep{joulani2020simpler}. 

\section{Experiments}
This section shows the empirical evaluation of the developed algorithms. We carry out experiments on both synthetic and real-world data and demonstrate the performances of the \textbf{OMD} (\textbf{Exp-MD}) and \textbf{FTRL} (\textbf{Exp-FTRL}) based on the exponentiated update. 

\subsection{Online Logistic Regression}
\label{sec:logit}
For a sanity check, we simulate an $d$-dimensional online logistic regression problem, in which the model parameter $w^*$ has a $99\%$ sparsity and the non-zero values are randomly drawn from the uniform distribution over $[-1,1]$. At each iteration $t$, we sample a random feature vector $x_t$ from a uniform distribution over $[-1,1]^d$ and generate a label $y_t\in \{-1,1\}$ using a logit model, i.e. $\operatorname{Pr}[y_t=1]=(1+\exp(-w^\top x_t))^{-1}$. The goal is to minimise the cumulative regret 
\[
\mathcal{R}_{1:T}=\sum_{t=1}^Tl_t(w_t)-\sum_{t=1}^Tl_t(w^*)
\]
with $l_t(w)=\ln(1+\exp(-y_tw^\top x_t))$. We choose $d=10,000$ and compare our algorithms with \textbf{AdaGrad}, \textbf{AdaFTRL} \citep{duchi2011adaptive} and \textbf{HU} \citep{ghai2020exponentiated}. For both \textbf{AdaGrad} and \textbf{AdaFTRL}, we set the $i$-th entry of the proximal matrix $H_t$ to $h_{ii}=10^{-6}+\sum_{s=1}^{t-1}g_{s,i}^2$ as their theory suggested \citep{duchi2011adaptive}. The stepsize of \textbf{HU} is set to $\sqrt{\frac{1}{\sum_{s=1}^{t-1}\norm{g_{s}}_\infty^2}}$ leading to an adaptive regret upper bound. All algorithms take decision variables from an $\ell_1$ ball $\{w\in\Rd\lvert \norm{w}_1\leq D\}$, which is the ideal case for \textbf{HU}. We examine the performances of the algorithms with known, underestimated and overestimated $\norm{w^*}_1$ by setting $D=\norm{w^*}_1$, $D=\frac{1}{2}\norm{w^*}_1$ and $D=2\norm{w^*}_1$, respectively. For each choice of $D$, we simulate the online process of each algorithm for $10,000$ iterations and repeat the experiments for $20$ trials. 

Figure~\ref{fig:logistic} plots the curves of the average cumulative regret with the ranges of standard deviation as shaded regions. As can be observed, our algorithms have a clear and stable advantage over the \textbf{AdaGrad}-style algorithms and slightly outperform \textbf{HU} in the experiments with known $\norm{w^*}_1$. As the combination of the entropy-like regulariser and \textbf{FTRL} can also be used for parameter-free optimisation \citep{cutkosky2017online}, overestimating $\norm{w^*}_1$ does not have a tangible impact on the performance of \textbf{Exp-FTRL}, which leads to its clear advantage over the rest.
\begin{figure}
\centering
\begin{subfigure}{.45\textwidth}
  \centering
  \includegraphics[width=\linewidth]{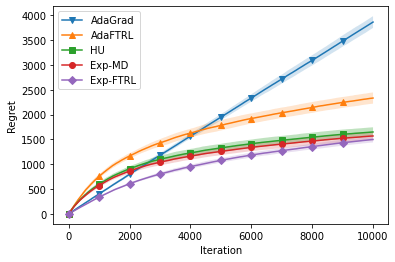}
  \caption{D=$\norm{w^*}_1$}%
\label{fig:logistic-known}
\end{subfigure}%

\begin{subfigure}{.45\textwidth}
  \centering
  \includegraphics[width=\linewidth]{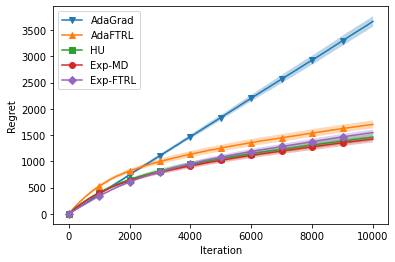}
\caption{$D=\frac{1}{2}\norm{w^*}_1$}%
\label{fig:logistic-under}
\end{subfigure}
\begin{subfigure}{.45\textwidth}
  \centering
  \includegraphics[width=\linewidth]{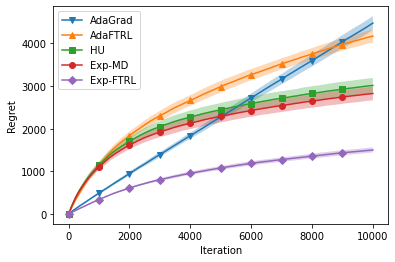}
\caption{$D=2\norm{w^*}_1$}%
\label{fig:logistic-over}
\end{subfigure}
\caption{Online Logistic Regression}
\label{fig:logistic}
\end{figure}

\subsection{Online Multitask Learning}
Next, we examine the performance of the developed spectral algorithms using a simulated online multi-task learning problem \citep{kakade2012regularization}, in which we need to solve $k$ highly correlated $d$-dimensional online prediction problems simultaneously. The data are generated as follows. We first randomly draw two orthogonal matrices $U\in\operatorname{GL}(d,\mathbb{R})$ and $V\in\operatorname{GL}(k,\mathbb{R})$. Then we generate a $k$-dimensional vector $\sigma$ with $r$ non-zero values randomly drawn from a uniform distribution over $[0,10]$ and construct a low rank parameter matrix $W^*=U\diag(\sigma)V$. At each iteration $t$, $k$ feature and label pairs $(x_{t,1},y_{t,1}),\ldots,(x_{t,k},y_{t,k})$ are generated using $k$ logit models with the $i$-th parameters taken from the $i$-th rows of $W$. The loss function is given by $l_t(W)=\sum_{i=1}^k\ln(1+\exp(-y_{t,i}w_i^\top x_{t,i}))$. We set $d=100$, $k=25$ and $r=5$, take the nuclear ball $\{W\in\mathbb{R}^{d,k}\lvert \norm{W}_1\leq D\}$ as the decision set and run the experiment as in subsection~\ref{sec:logit}. The average and standard deviation of the results over $20$ trials are shown in Figure~\ref{fig:multi-task}. 

Similar to the online logistic regression, our algorithms have a clear advantage over \textbf{AdaGrad} and \textbf{AdaFTRL} and slightly outperform \textbf{HU} in all settings. While the regret of the \textbf{AdaGrad}-style algorithms spread over a wider range, our algorithms yield relatively stabler results. The superiority of \textbf{Exp-FTRL} for the overestimated $\norm{W^*}_1$ can also be observed from figure~\ref{fig:multi-task-over}.
\begin{figure}
\centering
\begin{subfigure}{.45\textwidth}
  \centering
  \includegraphics[width=\linewidth]{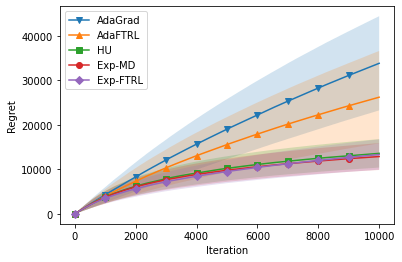}
  \caption{$D=\norm{W^*}_1$}%
\label{fig:multi-task-known}
\end{subfigure}%

\begin{subfigure}{.45\textwidth}
  \centering
  \includegraphics[width=\linewidth]{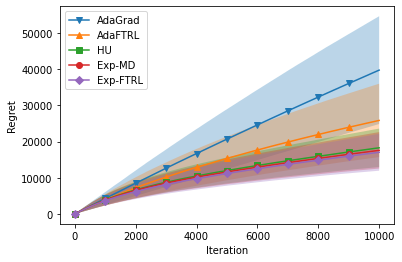}
\caption{Underestimated $D=\frac{1}{2}\norm{W^*}_1$}%
\label{fig:multi-task-under}
\end{subfigure}
\begin{subfigure}{.45\textwidth}
  \centering
  \includegraphics[width=\linewidth]{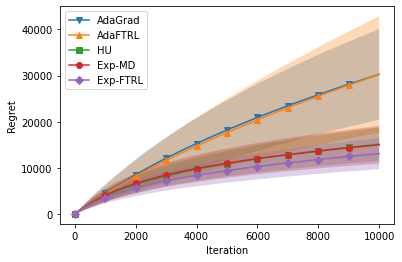}
\caption{Overestimated $D=2\norm{W^*}_1$}%
\label{fig:multi-task-over}
\end{subfigure}
\caption{Online Multitask Learning}
\label{fig:multi-task}
\end{figure}

\subsection{Optimisation for Contrastive Explanations}
Generating the contrastive explanation of a machine learning model \citep{NEURIPS2018_c5ff2543} is the most motivating application of this paper. Given a sample $x_0\in \mathcal{X}$ and machine learning model $f:\mathcal{X}\to \mathbb{R}^K $, the contrastive explanation consists of a set of pertinent positive (\textbf{PP}) features and a set of pertinent negative (\textbf{PN}) features, which can be found by solving the following optimisation problem \citep{NEURIPS2018_c5ff2543} 
\[
\begin{split}
\min_{x\in\mathcal{W}}\quad &l_{x_0}(x)+\lambda_1\norm{x}_1+\frac{\lambda_2}{2}\norm{x}_2^2.\\
\end{split}
\]
Let $\kappa\geq 0$ be a constant and define $k_0=\arg\max_{i}f(x_0)_i$.
The loss function for finding \textbf{PP} is given by
\[
l_{x_0}(x)=\max\{\max_{i\neq k_0}f(x)_i-f(x)_{k_0},-\kappa\},
\]
which imposes a penalty on the features that do not justify the prediction. \textbf{PN} is the set of features altering the final classification and is modelled by the following loss function
\[
l_{x_0}(x)=\max\{f(x_0+x)_{k_0}-\max_{i\neq k_0}f(x_0+x)_{i},-\kappa\}.
\]

In the experiment, we first train a ResNet$20$ model \citep{7780459} on the CIFAR-$10$ dataset \citep{krizhevsky2009learning}, which attains a test accuracy of $91.49\%$. For each class of the images, we randomly pick $100$ correctly classified images from the test dataset and generate \textbf{PP} and \textbf{PN} for them. For \textbf{PP}, we take the set of all feasible images as the decision set, while for \textbf{PN}, we take the set of tensors $x$, such that $x_0+x$ is a feasible image. 

We first consider the white-box setting, in which we have the access to $\grad l_{x_0}$. Our goal is to demonstrate the performance of the accelerated \textbf{AO-OMD} and \textbf{AO-FTRL} based on the exponentiated update (\textbf{AccAOExpMD} and \textbf{AccAOExpFTRL}).
In \cite{NEURIPS2018_c5ff2543}, the fast iterative shrinkage-thresholding algorithm (\textbf{FISTA}) \citep{beck2009fast} is applied to finding the \textbf{PP} and \textbf{PN}. Therefore, we take \textbf{FISTA} as our baseline. In addition, our algorithms are also compared with the accelerated \textbf{AO-OMD} and \textbf{AO-FTRL} with \textbf{AdaGrad}-style stepsizes (\textbf{AccAOMD} and \textbf{AccAOFTRL}) \citep{joulani2020simpler}. 

We pick $\lambda_1=\lambda_2=\frac{1}{2}$, which is the largest value from the set $\{2^{-i}\lvert i\in\mathbb{N}\}$ allowing \textbf{FISTA} to attain a negative loss $l_{x_0}$ for $10$ randomly selected images. All algorithms start from $x_1=0$. Figure~\ref{fig:WB} plots the convergence behaviour of the five algorithms, averaged over the $1000$ images. In the experiment for \textbf{PP}, our algorithms are obviously better than the \textbf{AdaGrad}-style algorithms. Although \textbf{FISTA} converges faster at the first $100$ iterations, it does not make further progress afterwards due to the tiny stepsize found by the backtracking rule. In the experiment for \textbf{PN}, all algorithms behave similarly. It is worth pointing out that the backtracking rule of \textbf{FISTA} requires multiple function evaluations, which are expensive for explaining deep neural networks. 
\begin{figure}
\centering
\begin{subfigure}{.5\textwidth}
  \centering
  \includegraphics[width=\linewidth]{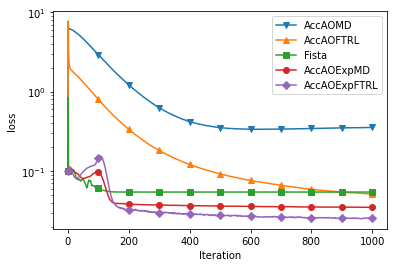}
\caption{Convergence for Generating \textbf{PP}}%
\label{fig:PP}
\end{subfigure}%
\begin{subfigure}{.5\textwidth}
  \centering
  \includegraphics[width=\linewidth]{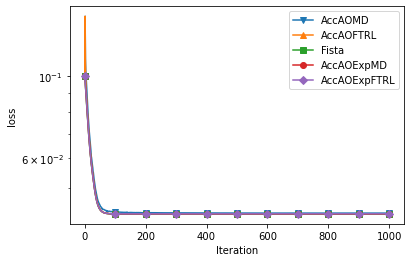}
\caption{Convergence for Generating  \textbf{PN}}%
\label{fig:PN}
\end{subfigure}
\caption{White Box Contrastive Explanations on CIFAR-$10$ }c
\label{fig:WB}
\end{figure}

Next, we consider the black-box setting, in which the gradient is estimated through the two-points estimation
\[
\frac{1}{b}\sum_{i=1}^{b}\frac{\delta}{\mu}(f(x+\mu v_i)-f(x))v_i,
\]
where $\delta$, $\mu$ are constants and $v_i$ is a random vector. Following \cite{chen2019zo}, we set $\delta=d$ and sample $v_i$ independently from the uniform distribution over the unit sphere for \textbf{AdaGrad}-style algorithms. Since the convergence of our algorithms depends on the variance of the gradient estimation in $(\Rd, \norm{\cdot}_\infty)$, we set $\delta=1$ and sample $\nu_{i,1},\ldots,\nu_{i,d}$ independently from Rademacher distribution according to Corollary~3 in \cite{duchi2015optimal}. To ensure a small bias of the gradient estimation, we set $\mu=\frac{1}{\sqrt{dT}}$, which is the recommended value for non-convex and constrained optimisation in \cite{chen2019zo}. The performances of the algorithms are examined in the high and low variance settings with $b=1$ and $b=\sqrt{T}$, respectively. Since the problem is stochastic, \textbf{FISTA}, which searches for the stepsize at each iteration, is not practical. Thus, we remove it from the comparison. 

Figure~\ref{fig:BB} plots the convergence behaviour of the algorithms in the high variance setting. Our algorithms outperform the \textbf{AdaGrad}-style algorithms for generating both \textbf{PP} and \textbf{PN}. Furthermore, the \textbf{FTRL} based algorithms have higher convergence rates than the \textbf{MD} based ones at the first few iterations, leading to overall better performance. The experimental results of the low variance setting are plotted in figure~\ref{fig:BB-batch}. Though \textbf{AccAOExpFTRL} yields the smallest objective value at the beginning of the experiments, it gets stuck in the local minimum around $0$ and is outperformed by \textbf{AccAOExpMD} and \textbf{AccAOFTRL} at the later iterations. Overall, the algorithms based on the exponentiated update have an advantage over the \textbf{AdaGrad}-style algorithms for both high and low variance settings. 
\begin{figure}
\centering
\begin{subfigure}{.5\textwidth}
  \centering
  \includegraphics[width=\linewidth]{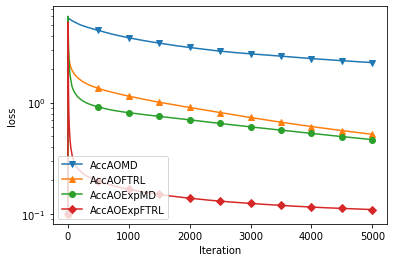}
\caption{Convergence for Generating \textbf{PP}}%
\label{fig:bb-pp}
\end{subfigure}%
\begin{subfigure}{.5\textwidth}
  \centering
  \includegraphics[width=\linewidth]{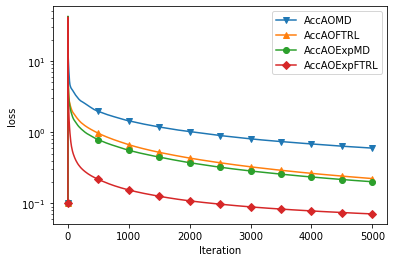}
\caption{Convergence for Generating  \textbf{PN}}%
\label{fig:bb-pn}
\end{subfigure}
\caption{Black Box Contrastive Explanations: High Variance Setting}
\label{fig:BB}
\end{figure}
\begin{figure}
\centering
\begin{subfigure}{.5\textwidth}
  \centering
  \includegraphics[width=\linewidth]{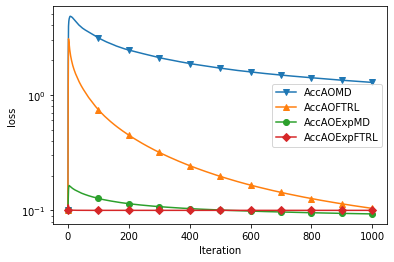}
\caption{Convergence for Generating \textbf{PP}}%
\label{fig:bb-pp-batch}
\end{subfigure}%
\begin{subfigure}{.5\textwidth}
  \centering
  \includegraphics[width=\linewidth]{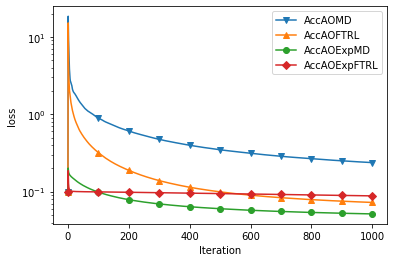}
\caption{Convergence for Generating  \textbf{PN}}%
\label{fig:bb-pn-batch}
\end{subfigure}
\caption{Black Box Contrastive Explanations: Low Variance Setting}
\label{fig:BB-batch}
\end{figure}

\section{Conclusion}
This paper proposes and analyses a family of online optimisation algorithms based on an entropy-like regulariser combined with the ideas of optimism and adaptivity. The proposed algorithms have adaptive regret bounds depending logarithmically on the dimensionality of the problem, can handle popular composite objectives and can be easily converted into stochastic optimisation algorithms with optimal accelerated convergence rates for smooth function. As a future research direction, we plan to analyse the convergence of the proposed algorithms together with variance reduction techniques for non-convex stochastic optimisation and analyse their empirical performance for training deep neural networks.  

\backmatter

\section*{Declarations}
\subsection*{Funding}
The research leading to these results received funding from the German Federal Ministry for Economic Affairs and Climate Action under Grant Agreement No. 01MK20002C.
\subsection*{Code availability}
The implementation of the experiments and all algorithms involved in the experiments are available on GitHub \url{https://github.com/mrdexteritas/exp_grad}.
\subsection*{Availability of data and materials}
The source code generating synthetic data, creating neural networks and model training are available on GitHub \url{https://github.com/mrdexteritas/exp_grad}. The CIFAR-10 data are collected from \url{https://www.cs.toronto.edu/~kriz/cifar.html}.
\subsection*{Conflicts of Interests and Competing Interests}
The authors declare that they have no conflicts of interests or competing interests.
\subsection*{Ethics Approval}
Not Applicable.
\subsection*{Consent to Participate}
Not Applicable
\subsection*{Consent for Publication}
Not Applicable
\subsection*{Authors' Contributions}
Conceptualization: WS; Methodology: WS; Formal analysis and investigation: WS; Software: WS; Validation: WS, FS; Visualization: WS; Writing - original draft preparation: WS; Writing - review and editing: WS, FS; Funding acquisition: SA; Resources: SA; Supervision: FS, SA.
\noindent
\bigskip





\begin{appendices}

\section{Missing Proofs of Section 3.1}
\subsection{Proof of Lemma \ref{lemma:property}}
\begin{proof}
It is straightforward that $\scfi$ is differentiable at $x\neq 0$ with \[\scfi'(x)=\alpha \ln (\frac{\abs{x}}{\beta}+1)\sgn(x).\] For any $h\in\mathbb{R}$, we have
\[
\begin{split}
\scfi(0+h)-\scfi(0)=&\alpha (\abs{h}+\beta)\ln(\frac{\abs{h}}{\beta}+1)-\alpha\abs{h}\\
\leq & \alpha (\abs{h}+\beta)\frac{\abs{h}}{\beta}-\alpha\abs{h}\\
=& \frac{\alpha}{\beta}h^2,
\end{split}
\]
where the first inequality uses the fact $\ln x\leq x-1$.
Further more, we have 
\[
\begin{split}
\scfi(0+h)-\scfi(0)=&\alpha (\abs{h}+\beta)\ln(\frac{\abs{h}}{\beta}+1)-\alpha\abs{h}\\
\geq & \alpha (\abs{h}+\beta)(\frac{\abs{h}}{\abs{h}+\beta})-\alpha\abs{h}\\
\geq & 0,
\end{split}
\]
where the first inequality uses the fact $\ln x\geq 1-\frac{1}{x}$.
Thus, we have 
\[
0\leq \frac{\scfi(0+h)-\scfi(0)}{h}\leq \frac{\alpha}{\beta}h
\]
for $h>0$ and 
\[
 \frac{\alpha}{\beta}h\leq \frac{\scfi(0+h)-\scfi(0)}{h}\leq 0.
\]
for $h<0$, from which it follows $\lim_{h\to 0} \frac{\scfi(0+h)-\scfi(0)}{h}=0$.
Similarly, we have for $x\neq 0$ \[
\scfi''(x)=\frac{\alpha}{\abs{x}+\beta}.
\]
Let $h\neq 0$. Then we have
\[
\frac{\scfi'(0+h)-\scfi'(0)}{h}=\frac{\alpha \ln(\frac{\abs{h}}{\beta}+1)\sgn(h)}{h}=\frac{\alpha \ln(\frac{\abs{h}}{\beta}+1)}{\abs{h}}.
\]
From the inequalities of the logarithm, it follows
\[
\frac{\alpha}{\abs{h}+\beta}\leq\frac{\scfi'(0+h)-\scfi'(0)}{h}\leq \frac{\alpha}{\beta}.
\]
Thus, we obtain $\scfi''(0)=\frac{\alpha}{\beta}$.
By the definition of the convex conjugate  we have
\begin{equation}
\label{eq:conj}
\scfi ^*(\theta)=\max_{x\in\mathbb{R}} \theta x-\scfi(x),
\end{equation}
which is differentiable.
The maximiser $y$ satisfies 
\[
\ln(\frac{\abs{y}}{\beta}+1)\sgn(y)=\theta.
\]
Since $\ln(\frac{\abs{y}}{\beta}+1)\geq 0$ holds, we have $\sgn(y)=\sgn(\theta)$ and 
\[
\abs{y}=\beta{\exp(\frac{\abs{\theta}}{\alpha})-\beta}.
\]
Thus, we obtain the maximiser $y=\scfi^{*\prime}(\theta)$ by setting

\begin{equation}
\label{eq:conj_p}
y=\sgn(\theta)(\beta{\exp(\frac{\abs{\theta}}{\alpha})-\beta}).
\end{equation}
Combining \eqref{eq:conj} and \eqref{eq:conj_p}, we obtain
\[
\scfi^*(\theta)=\alpha \beta \exp\frac{\abs{\theta}}{\alpha}-\beta\abs{\theta}-\alpha\beta.
\]
To prove that $\scfi^*$ is twice differentiable, it suffices to show that $\scfi^{*\prime}$ is differentiable at $0$. For any $h\neq 0$, we have
\[
\frac{\scfi^{*\prime}(0+h)-\scfi^{*\prime}(0)}{h}=\frac{\sgn(h)(\beta\exp(\frac{\abs{h}}{\alpha})-\beta)}{h}.
\]
Applying the inequalities of the logarithm, we obtain
\[
\frac{\beta}{\alpha}\leq \frac{\sgn(h)(\beta\exp(\frac{\abs{h}}{\alpha})-\beta)}{h}\leq \frac{\beta}{\alpha}\exp(\frac{\abs{h}}{\alpha}),
\]
from which it follows $\scfi^{*}$ is twice differentiable at $0$ and 
\[
\scfi^{*\prime\prime}(0)=\frac{\beta}{\alpha}.
\]
\end{proof}

\subsection{Proof of Lemma \ref{lem:covex-rd}}
\begin{proof}
Let $x\in \cK$ be arbitrary. We have
\[
\begin{split}
    v^\top \grad ^2 \sct{t}(x) v=&\alpha_t\sum_{i=1}^d \frac{v_i^2}{\abs{x_i}+\beta}\\
    =& \alpha_t\sum_{i=1}^d \frac{v_i^2}{\abs{x_i}+\beta}\sum_{i=1}^d(\abs{x_i}+\beta)\frac{1}{\sum_{i=1}^d(\abs{x_i}+\beta)}\\
    \geq &\frac{\alpha_t}{\sum_{i=1}^d(\abs{x_i}+\beta)}(\sum_{i=1}^d \abs{v_i})^2\\
    \geq &\frac{\alpha_t}{D+d\beta}(\sum_{i=1}^d \abs{v_i})^2\\
    = &\frac{\alpha_t}{D+d\beta}\norm{v}_1^2
\end{split}
\]
for all $v\in \Rd$, where the first inequality follows from the Cauchy-Schwarz inequality. This leads clearly to the strong convexity for a twice differentiable function.
\end{proof}

\subsection{Proof of Theorem \ref{thm:regret-omd}}
\begin{proposition}
\label{prop:omd}
Let $\cK\subseteq \mathbb{X}$ be a convex set. Assume that $\comp_t:\cK\to \mathbb{R}_{\geq 0}$ is closed convex function defined on $\cK$ and $\psi_{t}:\cK\mapsto \mathbb{R}$ is $\eta_t$-strongly convex w.r.t. $\norm{\cdot}$ over $\cK$. Then the sequence $\sequ{x}$ generated by \eqref{eq:omd} with regulariser $\sequ{\psi}$ guarantees 
\[
\begin{split}
\mathcal{R}_{1:T}\leq&\comp_1(x_1)+\bd{\sct{1}}{x}{x_1}+\sum_{t=1}^{T}(\bd{\sct{t+1}}{x}{x_t}-\bd{\sct{t}}{x}{x_t})+\sum_{t=1}^{T}\frac{\dualnorm{g_t-h_t}^2}{2\eta_{t+1}}.\\
\end{split}
\]
\begin{proof}
From the optimality condition, it follows that for all $x\in \cK$
\[
\begin{split}
&\inner{g_t-h_t+h_{t+1}+\grad \comp_{t+1}(x_{t+1})}{x_{t+1}-x} \\
\leq &\inner{\grad \sct{t+1}(x_{t})-\grad\sct{t+1}(x_{t+1})}{x-x_{t+1}}\\
=&\bd{\sct{t+1}}{x}{x_t}-\bd{\sct{t+1}}{x}{x_{t+1}}-\bd{\sct{t+1}}{x_{t+1}}{x_t}.
\end{split}
\]
Then, we have
\[
\begin{split}
&\inner{g_t}{x_{t}-x}+\comp_{t+1}(x_{t+1})-\comp_{t+1}(x)\\
\leq &\inner{g_t}{x_{t}-x_{t+1}}+\inner{g_t-h_t+h_{t+1}+\grad r_{t+1}(x_{t+1})}{x_{t+1}-x}\\
&+\inner{h_t-h_{t+1}}{x_{t+1}-x}\\
\leq &\inner{g_t-h_t}{x_{t}-x_{t+1}}+\inner{h_t}{x_t-x}-\inner{h_{t+1}}{x_{t+1}-x}\\
&+\bd{\sct{t+1}}{x}{x_t}-\bd{\sct{t+1}}{x}{x_{t+1}}-\bd{\sct{t+1}}{x_{t+1}}{x_t}\\
\end{split}
\]
Adding up from $1$ to $T$ , we obtain
\[
\begin{split}
&\sum_{t=1}^T(\inner{g_t}{x_{t}-x}+\comp_{t+1}(x_{t+1})-\comp_{t+1}(x))\\
\leq &\sum_{t=1}^T\inner{g_t-h_t}{x_{t}-x_{t+1}}+\sum_{t=1}^T(\inner{h_t}{x_t-x}-\inner{h_{t+1}}{x_{t+1}-x})\\
&+\sum_{t=1}^T(\bd{\sct{t+1}}{x}{x_t}-\bd{\sct{t+1}}{x}{x_{t+1}}-\bd{\sct{t+1}}{x_{t+1}}{x_t})\\
\leq &\sum_{t=1}^T(\inner{g_t-h_t}{x_{t}-x_{t+1}}-\bd{\sct{t+1}}{x_{t+1}}{x_t})\\
&+\inner{h_1}{x_1-x}-\inner{h_{T+1}}{x_{T+1}-x}\\
&+\bd{\sct{1}}{x}{x_1}+\sum_{t=1}^{T}(\bd{\sct{t+1}}{x}{x_t}-\bd{\sct{t}}{x}{x_t})\\
\end{split}
\]
$h_1$, $h_{T+1}$ and $x_{T+1}$, which are artifacts of the analysis, can be set to $0$. Then, we simply obtain
\[
\begin{split}
&\sum_{t=1}^T(\inner{g_t}{x_t-x}+\comp_t(x_{t})-\comp_t(x))\\
=&\sum_{t=1}^T(\inner{g_t}{x_t-x}+\comp_{t+1}(x_{t+1})-\comp_{t+1}(x))\\
&+\comp_1(x_1)-\comp_1(x)-\comp_{T+1}(x_{T+1})+\comp_{T+1}(x)\\
\leq&\sum_{t=1}^T(\inner{g_t}{x_t-x}+\comp_{t+1}(x_{t+1})-\comp_{t+1}(x))+\comp_1(x_1)-\comp_1(x)+\comp_{T+1}(x)\\
\leq&\comp_1(x_1)-\comp_1(x)+\comp_{T+1}(x)+\sum_{t=1}^T(\inner{g_t-h_t}{x_{t}-x_{t+1}}-\bd{\sct{t+1}}{x_{t+1}}{x_t})\\
&+\bd{\sct{1}}{x}{x_1}+\sum_{t=1}^{T}(\bd{\sct{t+1}}{x}{x_t}-\bd{\sct{t}}{x}{x_t})\\
\end{split}
\]
Since $\comp_{T+1}$ is not involved in the regret, we assume without loss of generality $\comp_1=\comp_{T+1}$. From the $\eta_t$-strong convexity of $\sct{t}$ we have 
\[
\begin{split}
&\inner{g_t-h_t}{x_{t}-x_{t+1}}-\bd{\sct{t+1}}{x_{t+1}}{x_t}\\
\leq &\inner{g_t-h_t}{x_{t}-x_{t+1}}-\frac{\eta_{t+1}}{2}\norm{x_t-x_{t+1}}^2\\
\leq & \dualnorm{g_t-h_t}\norm{x_t-x_{t+1}}-\frac{\eta_{t+1}}{2}\norm{x_t-x_{t+1}}^2\\
\leq & \frac{\dualnorm{g_t-h_t}^2}{2\eta_{t+1}}+\frac{\eta_{t+1}}{2}\norm{x_t-x_{t+1}}^2-\frac{\eta_{t+1}}{2}\norm{x_t-x_{t+1}}^2\\
=&\frac{\dualnorm{g_t-h_t}^2}{2\eta_{t+1}},
\end{split}
\]
where the second inequality uses the definition of dual norm, the third inequality follows from the fact $ab\leq \frac{a^2}{2}+\frac{b^2}{2}$.
The claimed the result follows.
\end{proof}
\end{proposition}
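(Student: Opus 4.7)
The plan is to derive the regret upper bound directly from the first-order optimality condition of the adaptive optimistic OMD update, combined with the three-point identity of Bregman divergence and the strong convexity of each $\sct{t}$. Since the regret involves $f_t = l_t + \comp_t$, I would first use convexity of $l_t$ to replace $l_t(x_t) - l_t(x)$ by $\inner{g_t}{x_t - x}$ for some $g_t\in\partial l_t(x_t)$, reducing the task to bounding $\sum_{t=1}^T \inner{g_t}{x_t - x} + \comp_t(x_t) - \comp_t(x)$.

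Next I would write down the variational inequality coming from $x_{t+1}$ minimising the strongly convex objective defining update rule~\eqref{eq:omd} over $\cK$: there is $\grad \comp_{t+1}(x_{t+1})\in\partial \comp_{t+1}(x_{t+1})$ such that for every $x\in\cK$,
\[
\inner{g_t-h_t+h_{t+1}+\grad\comp_{t+1}(x_{t+1})+\grad\sct{t+1}(x_{t+1})-\grad\sct{t+1}(x_t)}{x_{t+1}-x}\leq 0.
\]
Applying the three-point identity to the $\sct{t+1}$ pair converts those inner products into $\bd{\sct{t+1}}{x}{x_t}-\bd{\sct{t+1}}{x}{x_{t+1}}-\bd{\sct{t+1}}{x_{t+1}}{x_t}$, and the subgradient inequality for $\comp_{t+1}$ upgrades the inner product with $\grad\comp_{t+1}(x_{t+1})$ to $\comp_{t+1}(x_{t+1})-\comp_{t+1}(x)$. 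Splitting $\inner{g_t}{x_t-x}=\inner{g_t-h_t}{x_t-x_{t+1}}+\inner{g_t-h_t+h_{t+1}}{x_{t+1}-x}+\inner{h_t}{x_t-x}-\inner{h_{t+1}}{x_{t+1}-x}$ turns the per-step bound into a sum of (i) $\inner{g_t-h_t}{x_t-x_{t+1}}-\bd{\sct{t+1}}{x_{t+1}}{x_t}$, (ii) a telescoping hint pair, and (iii) a telescoping Bregman pair.

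Summing from $1$ to $T$ collapses the hint terms (with $h_1=h_{T+1}=0$ and $x_{T+1}=0$ as harmless analysis artifacts) and, after adding and subtracting $\bd{\sct{t}}{x}{x_t}$, rearranges the Bregman pair into $\bd{\sct{1}}{x}{x_1}+\sum_{t=1}^T(\bd{\sct{t+1}}{x}{x_t}-\bd{\sct{t}}{x}{x_t})$ once the nonpositive $-\bd{\sct{T+1}}{x}{x_{T+1}}$ is dropped. The composite index shift from $\comp_{t+1}(x_{t+1})-\comp_{t+1}(x)$ back to $\comp_t(x_t)-\comp_t(x)$ leaves the boundary terms $\comp_1(x_1)-\comp_1(x)-\comp_{T+1}(x_{T+1})+\comp_{T+1}(x)$; since $\comp_{T+1}$ is outside the regret, one sets $\comp_{T+1}=\comp_1$ without loss of generality so only $\comp_1(x_1)$ survives on the right-hand side.

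The remaining quantitative step is to bound $\inner{g_t-h_t}{x_t-x_{t+1}}-\bd{\sct{t+1}}{x_{t+1}}{x_t}$ by $\frac{\dualnorm{g_t-h_t}^2}{2\eta_{t+1}}$. By $\eta_{t+1}$-strong convexity of $\sct{t+1}$ one has $\bd{\sct{t+1}}{x_{t+1}}{x_t}\geq \frac{\eta_{t+1}}{2}\norm{x_t-x_{t+1}}^2$; the dual-norm inequality bounds the inner product by $\dualnorm{g_t-h_t}\norm{x_t-x_{t+1}}$, and Young's inequality $ab\leq\frac{a^2}{2\eta_{t+1}}+\frac{\eta_{t+1}b^2}{2}$ absorbs the quadratic residual exactly. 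I expect this cancellation to be mechanical; the main bookkeeping obstacle is aligning the two simultaneous index shifts—the composite term $\comp_{t+1}$ versus $\comp_t$ and the hint $h_{t+1}$ versus $h_t$—so that the final inequality reads with $\comp_1(x_1)$ on the right-hand side and no stray hint, $\comp_t$, or $\bd{\sct{T+1}}{\cdot}{\cdot}$ residuals.
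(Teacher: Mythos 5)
Your proposal is correct and follows essentially the same route as the paper's proof: the first-order optimality condition of update~\eqref{eq:omd}, the three-point identity, the subgradient inequality for $\comp_{t+1}$, the identical splitting of $\inner{g_t}{x_t-x}$ into a residual term plus telescoping hint and Bregman pairs, the same artifact conventions ($h_1=h_{T+1}=0$, $x_{T+1}=0$, $\comp_{T+1}=\comp_1$), and the same strong-convexity/Young's-inequality step yielding $\frac{\dualnorm{g_t-h_t}^2}{2\eta_{t+1}}$. The only cosmetic difference is that you make the initial linearisation of $l_t$ via convexity explicit, which the paper leaves implicit.
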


\begin{proof}[Proof of Theorem \ref{thm:regret-omd}]
Proposition \ref{prop:omd} can be directly applied, and we obtain
\begin{equation}
\label{eq1:thm1}
\begin{split}
\mathcal{R}_{1:T}\leq &\sum_{t=1}^{T}(\bd{\sct{t+1}}{x}{x_{t}}-\bd{\sct{t}}{x}{x_{t}})+\sum_{t=1}^T\frac{D+d\beta}{2\alpha_t}\norm{g_t-h_t}_\infty^2\\
&+\bd{\sct{1}}{x}{x_1}+r(x_1).\\
\end{split}
\end{equation}
Using Lemma \ref{lemma:bd_upper}, we bound the first term of \eqref{eq1:thm1}
\[
\begin{split}
&\sum_{t=1}^{T}(\bd{\sct{t+1}}{x}{x_{t}}-\bd{\sct{t}}{x}{x_{t}})\\
\leq & 4D(\ln(D+1)+\ln d)\sum_{t=2}^{T}(\alpha_{t+1}-\alpha_t)\\
\leq & 4D(\ln(D+1)+\ln d)\alpha_{T+1}\\
\leq & 4D(\ln(D+1)+\ln d)\eta \sqrt{\sum_{t=1}^T\norm{g_t-h_t}^2_\infty}.
\end{split}
\]
Using Lemma \ref{lemma:log}, the second term of \eqref{eq1:thm1} can be bounded as
\[
\begin{split}
\sum_{t=1}^T\frac{(D+1)\norm{g_t-h_t}_\infty^2}{4\alpha_t}\leq &\frac{D+1}{2\eta}\sqrt{\sum_{t=1}^T\norm{g_t-h_t}_\infty^2}\\
\end{split}
\]
The third term of \eqref{eq1:thm1} is simply $0$ since we set $\alpha_1=0$. Setting $\eta=\sqrt{\frac{1}{\ln(D+1)+\ln d}}$ and combining the inequalities above, we obtain the claimed result.
\end{proof}

\subsection{Proof of Theorem \ref{thm:regret-ftrl}}
\begin{proposition}
\label{prop:ftrl}
Let $\cK\subseteq \mathbb{X}$ be a compact convex set such that $\norm{x}\leq D$ holds for all $x\in \cK$, $\comp_t:\cK\to \mathbb{R}_{\geq 0}$ and $\sct{t}:\cK\mapsto \mathbb{R}$ closed convex function defined on $\cK$. Assume  $\sct{t}$ is $\eta_t$-strongly convex w.r.t. $\norm{\cdot}$ over $\cK$ and $\sct{t}\leq \sct{t+1}$ for all $t=1,\ldots,T$. Then the sequence $\sequ{x}$ generated by \eqref{eq:ftrl} with guarantees 
\begin{equation}
\label{ftrl:regret:eq}
\begin{split}
\mathcal{R}_{1:T}\leq \sct{T+1}(x)+\sum_{t=1}^T\frac{2D\dualnorm{g_t-h_t}^2}{\sqrt{16D^2\eta_t
^2+\dualnorm{g_t-h_t}^2}}.
\end{split}
\end{equation}
\begin{proof}[Proof of Proposition \ref{prop:ftrl}]
First, define $\psi_t=\comp_{1:t}+\sct{t}$. Then, we have 
\[
\begin{split}
&\sum_{t=1}^T\psi_{t+1}^*(\theta_{t+1}-h_{t+1})-\psi^*_{t}(\theta_t-h_t)\\
=&\psi^*_{T+1}(\theta_{T+1}-h_{T+1})-\psi^*_{1}(\theta_{1}-h_1)\\
\geq&\inner{\theta_{T+1}-h_{T+1}}{x}-\psi_{T+1}(x)-\psi^*_{1}(\theta_{1}-h_1)\\
\geq&\inner{-\sum_{t=1}^Tg_t-h_{T+1}}{x}-\psi_{T+1}(x)-\psi^*_{1}(\theta_{1}-h_1)\\
\end{split}
\]
Setting the artifacts $h_{T+1}$ to $0$, rearranging and adding $\sum_{t=1}^{T}\inner{g_t}{w_t}$ to both sides, we obtain 
\[
    \begin{split}
	&\sum_{t=1}^T\inner{g_t}{x_t-x}\\
	\leq&\psi_{T+1}(x)+\psi^*_{1}(\theta_{1}-h_1)+\sum_{t=1}^T(\psi^*_{t+1}(\theta_{t+1}-h_{t+1})-\psi^*_{t}(\theta_t-h_{t})+\inner{g_t}{x_t})\\
	=&\psi_{T+1}(x)-\inner{h_1}{x_1}-r_1(x_1)\\
	&+\sum_{t=1}^T(\psi^*_{t+1}(\theta_{t+1}-h_{t+1})-\psi^*_{t}(\theta_{t+1}))\\
	&+\sum_{t=1}^T(\psi^*_{t}(\theta_{t+1})-\psi_t^*(\theta_t-h_{t})+\inner{\theta_{t}-\theta_{t+1}}{\grad\psi_{t}^*(\theta_t-h_t)})\\
	\leq&\psi_{T+1}(x)-\inner{h_1}{x_1}-r_1(x_1)\\
	&+\sum_{t=1}^T(\psi^*_{t+1}(\theta_{t+1}-h_{t+1})-\psi^*_{t}(\theta_{t+1}))\\
	&+\sum_{t=1}^T(\psi^*_{t}(\theta_{t+1})-\psi_t^*(\theta_t-h_{t})+\inner{\theta_{t}-\theta_{t+1}}{\grad\psi_{t}^*(\theta_t-h_t)}),\\
    \end{split}
\]
From the definition of $\psi_t$, it follows 
\[
\psi_{T+1}(x)=\sct{T+1}(x)+r_{1:T+1}(x)=\sct{T+1}(x)+r_{1:T}(x),
\] 
where we assumed $r_{T+1}\equiv 0$, since it is not involved in the regret.
Furthermore, we have for $t\geq 1$
\[
\begin{split}
&\psi^*_{t+1}(\theta_{t+1}-h_{t+1})-\psi^*_{t}(\theta_{t+1})\\
\leq&\inner{\theta_{t+1}-h_{t+1}}{x_{t+1}}-\psi_{t+1}(x_{t+1})-\inner{\theta_{t+1}}{x_{t+1}}+\psi_{t}(x_{t+1})\\
=&-\inner{h_{t+1}}{x_{t+1}}-\psi_{t+1}(x_{t+1})+\psi_{t}(x_{t+1})\\
=&-\inner{h_{t+1}}{x_{t+1}}-\comp_{1:t+1}(x_{t+1})+\comp_{1:t}(x_{t+1})-\sct{t+1}(x_{t+1})+\sct{t}(x_{t+1})\\
\leq&-\inner{h_{t+1}}{x_{t+1}}-\comp_{t+1}(x_{t+1}),\\
\end{split}
\]
where the first inequality uses the definition of convex conjugate and the second inequality follows from the fact $\sct{t+1}\leq \sct{t}$.
Adding up from $1$ to $T$, we obtain
\[
\begin{split}
&\sum_{t=1}^T(\psi^*_{t+1}(\theta_{t+1}-h_{t+1})-\psi^*_{t}(\theta_{t+1}))\\
\leq&-\sum_{t=1}^T\comp_{t+1}(x_{t+1})-\sum_{t=1}^T\inner{h_{t+1}}{x_{t+1}}\\
=&\comp_1(x_1)+\inner{h_{1}}{x_1}-\comp_{T+1}(x_{t+1})-\inner{h_{T+1}}{x_{T+1}}-\sum_{t=1}^T\comp_t(x_{t})-\sum_{t=1}^T\inner{h_{t}}{x_{t}}\\
=&\comp_1(x_1)+\inner{h_{1}}{x_1}-\sum_{t=1}^T\comp_t(x_{t})-\sum_{t=1}^T\inner{h_{t}}{x_{t}},\\
\end{split}
\]
where we use $r_{T+1}\equiv 0$ and $h_{T+1}=0$.
Combining the inequality above and rearranging, we have
\begin{equation}
\label{prop:ftrl:eq1}
\begin{split}
&\sum_{t=1}^T(\inner{g_t}{x_t-x}+\comp_t(x_t)-\comp_t(x))\\
\leq &\sct{T+1}(x)+\sum_{t=1}^T(\psi^*_{t}(\theta_{t+1})-\psi_t^*(\theta_t-h_{t})+\inner{\theta_{t}-h_t-\theta_{t+1}}{\grad\psi_{t}^*(\theta_t-h_t)})\\
\leq &\sct{T+1}(x)+\sum_{t=1}^T\bd{\psi_t^*}{\theta_{t+1}}{\theta_t-h_t}.\\
\end{split}
\end{equation}
Next, by the definition of the Bregman divergence, we have
\[
\begin{split}
    &\bd{\psi_t^*}{\theta_{t+1}}{\theta_t-h_t}\\
    \leq&\inner{\theta_{t+1}}{\grad\psi_t^*(\theta_{t+1})}-\psi_{t}(\grad\psi_t^*(\theta_{t+1}))-\inner{\theta_t-h_t}{x_t}+\psi_t(x_t)+\inner{g_t-h_t}{x_t}\\
    =&\inner{\theta_{t}-h_t}{\grad\psi_t^*(\theta_{t+1})-x_t}-\psi_{t}(\grad\psi_t^*(\theta_{t+1}))+\psi_t(x_t)+\inner{g_t-h_t}{x_t-\grad\psi_t^*(\theta_{t+1})}\\
    =&\inner{g_t-h_t}{x_t-\grad\psi_t^*(\theta_{t+1})}-\bd{\psi_t}{\grad\psi_t^*(\theta_{t+1})}{x_t}.\\
\end{split}
\]
Since $\sct{t}$ is $\eta_{t}$ strongly convex, we have
\begin{equation}
\label{eq:prop:ftrl:1}
\begin{split}
&\inner{g_t-h_t}{x_t-\grad\psi_t^*(\theta_{t+1})}-\bd{\psi_{t}}{\grad\psi_t^*(\theta_{t+1})}{x_t}\\
\leq &\frac{1}{2\eta_t}\dualnorm{g_t-h_t}^2+\frac{\eta_t}{2}\norm{x_t-\grad\psi_t^*(\theta_{t+1})}^2-\bd{\psi_{t}}{\grad\psi_t^*(\theta_{t+1})}{x_t}\\
\leq &\frac{1}{2\eta_t}\dualnorm{g_t-h_t}^2\\
\end{split}
\end{equation}
We also have 
\begin{equation}
\label{eq:prop:ftrl:2}
\begin{split}
&\inner{g_t-h_t}{x_t-\grad\psi_t^*(\theta_{t+1})}-\bd{\psi_{t}}{\grad\psi_t^*(\theta_{t+1})}{x_t}\\
\leq &\inner{g_t-h_t}{x_t-\grad\psi_t^*(\theta_{t+1})}\\
\leq &2D\dualnorm{g_t-h_t}.\\
\end{split}
\end{equation}
Putting \eqref{eq:prop:ftrl:1} and \eqref{eq:prop:ftrl:2} together, we have
\[
\begin{split}
&\inner{g_t-h_t}{x_t-\grad\psi_t^*(\theta_{t+1})}-\bd{\psi_{t}}{\grad\psi_t^*(\theta_{t+1})}{x_t}\\
\leq &\min\{\frac{1}{2\eta_t}\dualnorm{g_t-h_t}^2,2D\dualnorm{g_t-h_t}\}\\
\leq &\frac{1}{\frac{2\eta_t}{\dualnorm{g_t-h_t}^2}+\frac{1}{2D\dualnorm{g_t-h_t}}}\\
\leq &\frac{2D\dualnorm{g_t-h_t}^2}{4D\eta_t+\dualnorm{g_t-h_t}}\\
\leq &\frac{2D\dualnorm{g_t-h_t}^2}{\sqrt{16D^2\eta_t
^2+\dualnorm{g_t-h_t}^2}}\\
\end{split}
\]
Combining the inequalities above, we obtain
\[
\mathcal{R}_{1:T}\leq \sct{T+1}(x)+\sum_{t=1}^T\frac{2D\dualnorm{g_t-h_t}^2}{\sqrt{16D^2\eta_t
^2+\dualnorm{g_t-h_t}^2}}
\]
\end{proof}
\end{proposition}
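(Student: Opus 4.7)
The plan is to analyse \textbf{AO-FTRL} through the Fenchel-conjugate framework by introducing the composite potential $\psi_t = \sct{t} + \comp_{1:t}$ and the cumulative linear variable $\theta_t = -g_{1:t-1}$. After absorbing the shifts induced by the Bregman anchor $\bd{\sct{t+1}}{\cdot}{x_1}$ into a constant redefinition of the dual variables, the update~\eqref{eq:ftrl} becomes $x_t = \grad\psi_t^*(\theta_t - h_t)$, so every step is tracked by evaluating $\psi_t^*$ at shifted dual points.

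First, I would form the telescoping sum $\sum_{t=1}^T[\psi_{t+1}^*(\theta_{t+1}-h_{t+1}) - \psi_t^*(\theta_t - h_t)]$ and apply Fenchel-Young at the benchmark $x$ on the top end to produce the leading term $\psi_{T+1}(x) = \sct{T+1}(x) + \comp_{1:T}(x)$ (with $\comp_{T+1}\equiv 0$ set as an artifact). I would then split each telescoping difference into $\psi_{t+1}^*(\theta_{t+1}-h_{t+1}) - \psi_t^*(\theta_{t+1})$ and $\psi_t^*(\theta_{t+1}) - \psi_t^*(\theta_t - h_t)$. For the first piece, applying Fenchel-Young at $x_{t+1}$ and invoking the monotonicity $\sct{t}\leq \sct{t+1}$ yields $-\comp_{t+1}(x_{t+1}) - \inner{h_{t+1}}{x_{t+1}}$, which lets the composite losses telescope against the $\comp_t(x_t)$ in the regret and leaves only $\sct{T+1}(x)$ on the potential side. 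The boundary hint terms $\inner{h_1}{x_1}$ and $\inner{h_{T+1}}{x_{T+1}}$ are set to zero as analytic artifacts.

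The second piece is a one-step Bregman divergence $\bd{\psi_t^*}{\theta_{t+1}}{\theta_t - h_t}$, which I would bound two complementary ways. By duality of strong convexity, $\eta_t$-strong convexity of $\psi_t$ implies $\frac{1}{\eta_t}$-smoothness of $\psi_t^*$, giving the divergence at most $\frac{1}{2\eta_t}\dualnorm{g_t-h_t}^2$. Alternatively, unfolding the divergence as $\inner{g_t - h_t}{x_t - \grad\psi_t^*(\theta_{t+1})} - \bd{\psi_t}{\grad\psi_t^*(\theta_{t+1})}{x_t}$, dropping the nonnegative Bregman term and invoking the diameter estimate $\norm{x_t - \grad\psi_t^*(\theta_{t+1})}\leq 2D$ yields $2D\dualnorm{g_t - h_t}$. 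Taking the minimum of the two bounds, converting via a harmonic-mean inequality, and then using an elementary inequality of the form $a + b \geq \sqrt{a^2 + b^2}$ (applied with $a \propto D\eta_t$ and $b = \dualnorm{g_t - h_t}$) produces an expression of the form in the statement.

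The main obstacle is the second paragraph: correctly telescoping the composite regulariser $\comp_t$ together with the non-stationary proximal term $\sct{t}$ so that precisely $\sct{T+1}(x)$ survives as the leading constant. The monotonicity assumption $\sct{t}\leq \sct{t+1}$ is the key structural lever, since without it each step would accumulate an extra $\sct{t+1}(x_{t+1}) - \sct{t}(x_{t+1})$ term that cannot be absorbed. Care is also needed with the boundary artifacts $h_1$, $h_{T+1}$, and $\comp_{T+1}$ to avoid picking up spurious leftover terms in the final bound.
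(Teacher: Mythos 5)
Your proposal follows essentially the same route as the paper's proof: the same potential $\psi_t=\sct{t}+\comp_{1:t}$, the same telescoping of $\psi_t^*$ at the shifted dual points with Fenchel--Young at the benchmark $x$, the same split of each difference into a monotonicity-controlled piece (yielding $-\comp_{t+1}(x_{t+1})-\inner{h_{t+1}}{x_{t+1}}$) and the dual Bregman divergence $\bd{\psi_t^*}{\theta_{t+1}}{\theta_t-h_t}$, the same pair of bounds (strong convexity versus the diameter bound $2D\dualnorm{g_t-h_t}$), and the same min/harmonic-mean/square-root conversion to the final form. The only cosmetic difference is that you get the $\frac{1}{2\eta_t}\dualnorm{g_t-h_t}^2$ bound by citing the strong-convexity/smoothness conjugate duality for $\psi_t^*$, whereas the paper unfolds the dual divergence and applies Young's inequality together with the strong convexity of $\psi_t$ directly; these are equivalent arguments.
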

\begin{proof}[Proof of Theorem \ref{thm:regret-ftrl}]
We take the Bregman divergence $\bd{\sct{t}}{x}{x_1}$ as the regulariser at iteration $t$. 
Since $\bd{\sct{t}}{x}{x_1}$ is non-negative, increasing with $t$ and $\frac{2\alpha_t}{D+\beta d}$ strongly-convex w.r.t. $\norm{\cdot}_1$, Proposition \ref{prop:ftrl} can be directly applied, and we get
\[
\begin{split}
\mathcal{R}_{1:T}\leq&\bd{\sct{T+1}}{x}{x_1}+\sum_{t=1}^T\frac{2D\norm{g_t-h_t}_\infty^2}{\sqrt{\frac{64D^2\alpha_t^2}{(D+\beta d)^2}
+\norm{g_t-h_t}_\infty^2}}\\
=&\bd{\sct{T+1}}{x}{x_1}+\frac{2D}{\eta}\sum_{t=1}^T\frac{\norm{g_t-h_t}_\infty^2}{\sqrt{\frac{64D^2}{(D+\beta d)^2}\sum_{s=1}^{t-1}\norm{g_s-h_t}^2_\infty
+\frac{1}{\eta^2}\norm{g_t-h_t}_\infty^2}}\\
\end{split}
\]
Setting $\beta=\frac{1}{d}$ and $\eta=\frac{1}{\sqrt{\ln(D+1)+\ln d}}$, we have
\[
\begin{split}
&\frac{\norm{g_t-h_t}_\infty^2}{\sqrt{\frac{64D^2}{(D+\beta d)^2}\sum_{s=1}^{t-1}\norm{g_s-h_t}^2_\infty
+\frac{1}{\eta^2}\norm{g_t-h_t}_\infty^2}}\\
=&\frac{\norm{g_t-h_t}_\infty^2}{\sqrt{\frac{64D^2}{(D+1)^2}\sum_{s=1}^{t-1}\norm{g_s-h_t}^2_\infty
+(\ln(D+1)+\ln d)\norm{g_t-h_t}_\infty^2}}\\
\leq &\frac{\norm{g_t-h_t}_\infty^2}{\sqrt{\sum_{s=1}^{t-1}\norm{g_s-h_t}^2_\infty
+\norm{g_t-h_t}_\infty^2}}\\
=&\frac{\norm{g_t-h_t}_\infty^2}{\sqrt{\sum_{s=1}^{t}\norm{g_s-h_t}^2_\infty}},\\
\end{split}
\]
where the inequality uses the assumptions $D\geq 1$ and $d>e$. Adding up from $1$ to $T$, we obtain
\[
\begin{split}
\mathcal{R}_{1:T}\leq &\bd{\sct{T+1}}{x}{x_1}+2D\sqrt{\ln(D+1)+\ln d}\sum_{t=1}^T\frac{\norm{g_t-h_t}_\infty^2}{\sqrt{\sum_{s=1}^{t}\norm{g_s-h_t}^2_\infty}}\\
\leq & \bd{\sct{T+1}}{x}{x_1}+4D\sqrt{\ln(D+1)+\ln d}\sqrt{\sum_{t=1}^T\norm{g_t-h_t}_\infty^2}
\end{split}
\]
The first term can be bounded by Lemma \ref{lemma:bd_upper}
\[
\begin{split}
\bd{\sct{T+1}}{x}{x_1}\leq &4D\sqrt{\ln(D+1)+\ln d}\sqrt{\sum_{t=1}^T\norm{g_t-h_t}_\infty^2}\\
\end{split}
\]
Combining the inequality above, we obtain
\[
\begin{split}
\mathcal{R}_{1:T}\leq &c(D,d)\sqrt{\sum_{t=1}^T\norm{g_t-h_t}_\infty^2},\\
\end{split}
\]
with $c(D,d)\in \mathcal{O}(D\sqrt{\ln(D+1)+\ln d})$, which is the claimed result.
\end{proof}

\section{Missing Proofs of Section 3.2}
\subsection{Proof of Theorem \ref{thm:convex-mat}}
\label{appendix:thm:convex-mat}
The Proof of Theorem \ref{thm:convex-mat} is based on the idea of \cite{ghai2020exponentiated}. We first revise some technical lemmata.
\begin{proof}[Proof of Lemma \ref{lemma:D2}]
Define $\Tilde{F}:\mathbb{S}^d\to \mathbb{S}^d, X\mapsto U\diag(f(\lambda_1(X)),\ldots,f(\lambda_d(X)))U^\top$. Apparently, we have $F(X)=\tr\tilde{F}(X)$.
From the Theorem V.3.3 in \cite{bhatia2013matrix}, it follows that $\tilde{F}$ is differentiable and
\[
D\Tilde{F}(X)(H)=U({\Gamma}(f,X)\odot U^\top HU)U^\top.
\] 
Using the linearity of the trace and the chain rule, $F$ is differentiable and the directional derivative at $X$ in $H$ is given by
\[
\begin{split}
D_HF(X)=&D\tr(\tilde{F}(X))\circ D\tilde{F}(X)(H)\\
=&\tr(D\tilde{F}(X)(H))\\
=&\tr(U(\tilde{\Gamma}(f,X)\odot U^\top HU)U^\top)\\
=&\tr(\tilde{\Gamma}(f,X)\odot U^\top HU)\\
=&\sum_{i=1}^df'(\lambda_i(X))\tilde{h}_{ii}\\
=&\tr( U\diag(f'(\lambda_1(X)),\ldots, f'(\lambda_d(X)))U^\top H)
\end{split}
\]
where $\tilde{h}_{ii}$ is the $i$-th element in the diagonal of the matrix $U^\top HU$. Next, define 
\[
\Bar{F}:\mathbb{S}^d\to \mathbb{S}^d, X\mapsto U\diag(f'(\lambda_1(X)),\ldots, f'(\lambda_d(X)))U^\top.
\]
And we have 
\[
DF(X)=H\mapsto\tr(\bar{F}(X)H)
\]
Applying Theorem V.3.3 in \cite{bhatia2013matrix} again, we obtain the differentiability of $\bar{F}$ and 
\[
\begin{split}
D\Bar{F}(X)(G)=U(\Gamma(f',X)\odot U^\top GU)U^\top.
\end{split}
\]
Note that $X\mapsto \tr(X(\cdot))$ is a linear map between finite dimensional spaces. Thus $F$ is twice differentiable. From the linearity of the trace operator and matrix multiplication, it follows that $D_HF(X)$ is differentiable. Applying the chain rule, we obtain 
\[
\begin{split}
D^2F(X)(G,H)=&D_G(D_HF)(X)\\
=&D(D_H F)(X)(G)\\
=&\tr((D\Bar{F}(X)(G))H)\\
=&\tr(U(\Gamma(f',X)\odot U^\top GU)U^\top H)\\
=&\tr((\Gamma(f',X)\odot U^\top GU) U^\top HU)\\
=&\sum_{i.j}\gamma(f',X)_{ij}\tilde{g}_{ij}\tilde{h}_{ij},\\
\end{split}
\]
which is the claimed result.
\end{proof}

\begin{proof}[Proof of Lemma \ref{lemma:Duality}]
Since $D^2\dualmscf(\theta)\in\mathcal{L}(\mathbb{X}_*,\mathcal{L}(\mathbb{X}_*,\mathbb{R}))$ is positive definite and $\mathbb{X}$ is finite dimensional, the map
\[
f_\theta:\mathbb{X}_*\to \mathbb{X}, v\mapsto D^2\dualmscf(\theta)(v,\cdot)
\]
is invertible. Furthermore, defining $\psi_\theta:\mathbb{X}_*\to \mathbb{R}, v\mapsto \frac{1}{2} D^2\dualmscf(\theta)(v,v)$, we have
\[
\begin{split}
D\psi_\theta(v)=&\frac{1}{2}D^2\dualmscf(\theta)(v,\cdot)+\frac{1}{2}D^2\dualmscf(\theta)(\cdot,v)\\
=&f_\theta(v).
\end{split}
\]
Thus, we obtain the convex conjugate $\psi_\theta^*$
\[
\begin{split}
    \psi_\theta^*(x)=&\sup_{v\in\mathbb{X}_*}\inner{v}{x}-\psi_\theta(v)\\
    =&\inner{f^{-1}_\theta(x)}{x}-\psi_{\theta}(f^{-1}_\theta(x))\\
    =&\inner{f^{-1}_\theta(x)}{x}-\frac{1}{2}\inner{f^{-1}_\theta(x)}{D^2\dualmscf(\theta)(f^{-1}_\theta(x),\cdot)}\\
    =&\inner{f^{-1}_\theta(x)}{x}-\frac{1}{2}\inner{f^{-1}_\theta(x)}{f_\theta(f^{-1}_\theta(x))}\\
    =&\frac{1}{2}\inner{f^{-1}_\theta(x)}{x}\\
\end{split}
\]
by setting $x=D\psi_\theta(v)$. Denote by $I:\mathbb{X}\to \mathbb{X}, x\mapsto x$ the identity function. From $D\dualmscf=D\mscf^{-1}$, it follows
\[
\begin{split}
    I(x)=&D I(v)(x)\\
    =&D(D\dualmscf\circ D\mscf)(v)(x)\\
    =&D^2\dualmscf(D\mscf(v))\circ D^2\mscf(v)(x),\\
    =&D^2\dualmscf(\theta)\circ D^2\mscf(D\dualmscf(\theta))(x)\\
\end{split}
\]
for $\theta=D\mscf(v)$ and all $x\in\mathbb{X}$. Thus, we have $f_\theta^{-1}= D^2\mscf(D\dualmscf(\theta))$ and 
\[ 
\begin{split}
\psi_\theta^*(x)=&\frac{1}{2}\inner{f^{-1}_\theta(x)}{x}\\
=&\frac{1}{2}D^2\mscf(D\dualmscf(\theta))(x,x).
\end{split}
\]
Finally, since $\psi_\theta(v)\leq \frac{1}{2}\dualnorm{v}^2$ holds for all $v\in\mathbb{X}_*$, we can reverse the order by applying Proposition 2.19 in \cite{barbu2012convexity} and obtain for all $x\in\mathbb{X}$
\[
\frac{1}{2}D^2\mscf(D\dualmscf(\theta))(x,x)=\psi_\theta^*(x)\geq \frac{1}{2}\norm{x}^2,
\]
which is the claimed result.
\end{proof}
Finally, we can prove Theorem \ref{thm:convex-mat}.
\begin{proof}[Proof of Theorem \ref{thm:convex-mat}]
We start the proof by introducing the required definitions. Define the operator 
\[
S:\mathbb{R}^{m,n}\to \mathbb{S}^{m+n}, X\mapsto \begin{bmatrix}
0 & X\\
X^\top & 0
\end{bmatrix}
\]
The set $\mathcal{X}=\{S(X)\lvert X\in\mathbb{R}^{m,n}\}$ is a finite dimensional linear subspace of the space of symmetric matrices $\mathbb{S}^{m+n}$, and thus $(\mathcal{X},\norm{\cdot}_1)$ is a finite dimensional Banach space. Its dual space $\mathcal{X}_*$ determined by the Frobenius inner product can be represented by $\mathcal{X}$ itself. Denote by $\mathbb{B}(D)=\{X\in\mathbb{R}^{m,n}\lvert \norm{X}_1\leq D\}$ the nuclear ball with radius $D$. Then the set $\cK=\{S(X)\lvert X\in \mathbb{B}(D)\}$ is a nuclear ball in $\mathcal{X}$ with radius $2D$, since $\norm{S(X)}_1=2\norm{X}_1$ for all $X\in\mathbb{R}^{m,n}$.

Let $S(X)\in \cK$ be arbitrary. Denote by $F_t=\msct{t}\vert_{\mathcal{X}}$ the restriction of $\msct{t}$ to $\mathcal{X}$. Next, we show the strong convexity of $F_t$ over $\cK$. From the conjugacy formula of Theorem 2.4 in \cite{lewis1995convex} and Lemma \ref{lemma:property}, it follows
\[
\begin{split}
F_t^*(S(X))=&\sct{t}^*\circ\sigma(S(X))=\sct{t}^*\circ\lambda(S(X)),\\
\end{split}
\]
where the second equality follows from the fact that $\dualmsct{t}$ is absolutely symmetric.
By Lemma \ref{lemma:property} and Lemma \ref{lemma:D2}, $F_t^*$ is twice differentiable. Let $X\in\cK$ be arbitrary and $\Theta=DF_t(X)\in\mathcal{X}_*$. For simplicity, we define
\[
f_t:\mathbb{R}\to\mathbb{R}, x\mapsto \alpha_t \beta \exp\frac{\abs{x}}{\alpha_t}-\beta\abs{x}-\alpha_t\beta.
\] 
Then, for all $H\in\mathcal{X}$,
\[
D^2F_t^*(\Theta)(H,H)=\sum_{ij}\gamma(f_t',\Theta)_{ij}\tilde{h}_{ij}^2,
\]
where $\Gamma(f_t',\Theta)=[\gamma(f_t',\Theta)_{ij}]$ is the matrix of the second divided difference with 
\[
\gamma(f_t',\Theta)_{ij}=\begin{cases}
    \frac{f_t'(\lambda_i(\Theta))-f_t'(\lambda_j(\Theta))}{\lambda_i(\Theta)-\lambda_j(\Theta)},& \text{if }\lambda_i(\Theta)\neq \lambda_j(\Theta)\\
    f_t^{\prime\prime}(\lambda_i(\Theta)),              & \text{otherwise.}
\end{cases}
\]
$D^2F_t^*(\Theta)$ is clearly positive definite over $\mathbb{S}^{m+n}$, since $\gamma(f_t',\Theta)_{ij}>0$ for all $i$ and $j$. Furthermore, from the mean value theorem and the convexity of $f_t^{\prime\prime}$, there is a $c_{ij}\in (0,1)$ such that 
\[
\begin{split}
\frac{f_t^{\prime}(\lambda_i(\Theta))-f_t^{\prime}(\lambda_j(\Theta))}{\lambda_i(\Theta)-\lambda_j(\Theta)}\leq &f_t^{\prime\prime}(c_{ij}\lambda_i(\Theta)+(1-c_{ij})\lambda_j(\Theta))\\
\leq&c_{ij}f_t^{\prime\prime}(\lambda_i(\Theta))+(1-c_{ij})f_t^{\prime\prime}(\lambda_j(\Theta))\\
\leq&f_t^{\prime\prime}(\lambda_i(\Theta))+f_t^{\prime\prime}(\lambda_j(\Theta))\\
\end{split}
\]
holds for all $\lambda_i(\Theta)\neq \lambda_j(\Theta)$. Thus, we obtain
\begin{equation}
\label{eq1:theo:spec}
\begin{split}
D^2F_t^*(\Theta)(H,H)=&\sum_{ij}\gamma(f_t,\Theta)_{ij}\tilde{h}_{ij}^2\\
\leq &\sum_{ij}(f_t^{\prime\prime}(\lambda_i(\Theta))+f_t^{\prime\prime}(\lambda_j(\Theta)))\tilde{h}_{ij}^2\\
= &2\sum_{i=1}^{m+n}f_t^{\prime\prime}(\lambda_i(\Theta))\sum_{j=1}^{m+m}\tilde{h}_{ij}^2\\
=&2\tr(UHU^\top\diag(f_t^{\prime\prime}(\lambda_1(\Theta)),\ldots,f_t^{\prime\prime}(\lambda_{m+n}(\Theta))UHU^\top)\\
=&2\tr(H^2\diag(f_t^{\prime\prime}(\lambda_1(\Theta)),\ldots,f_t^{\prime\prime}(\lambda_{m+n}(\Theta)))\\
\leq &2\sum_{i=1}^{2\min\{m,n\}}\sigma_i(H^2)\sigma_i(\diag(f_t^{\prime\prime}(\lambda_1(\Theta)),\ldots,f_t^{\prime\prime}(\lambda_{m+n}(\Theta)))
\end{split}
\end{equation}
where the last line uses von Neumann’s trace inequality and the fact that the rank of $H\in\mathcal{X}$ and $\Theta$ is at most $2\min\{m,n\}$. Since $H^2$ is positive semi-definite, $\sigma_i(H^2)=\sigma_i(H)^2$ holds for all $i$. Furthermore, $f_t''(x)\geq 0$ holds for all $x\in\mathbb{R}$. Thus, the last line of \eqref{eq1:theo:spec} can be rewritten into
\begin{equation}
\label{eq2:theo:spec}
\begin{split}
D^2F_t^*(\Theta)(H,H)\leq &2\sum_{i=1}^{2\min\{m,n\}}\sigma_i(H)^2\sigma_i(\diag(f_t^{\prime\prime}(\lambda_1(\Theta)),\ldots,f_t^{\prime\prime}(\lambda_{m+n}(\Theta)))\\
    \leq &2\norm{H}_\infty^2\sum_{i=1}^{2\min\{m,n\}}\sigma_i(\diag(f_t^{\prime\prime}(\lambda_1(\Theta)),\ldots,f_t^{\prime\prime}(\lambda_{m+n}(\Theta)))\\
    \leq &2\norm{H}_\infty^2\sum_{i=1}^{2\min\{m,n\}}f_t^{\prime\prime}(\lambda_i(\Theta)).
\end{split}
\end{equation}
Recall $\Theta=DF_t(S(X))$ for $S(X)\in\cK$. Together with Lemma \ref{lemma:property}, we obtain
\[
\begin{split}
f_t^{\prime\prime}(\lambda_i(\Theta))=&\frac{\beta}{\alpha_t} \exp\frac{\abs{\lambda_i(\Theta)}}{\alpha_t}\\
=&\frac{\beta}{\alpha_t} \exp\frac{\abs{ \alpha_t \ln (\frac{\abs{\lambda_i(S(X))}}{\beta}+1)}}{\alpha_t}\\
=&\frac{\abs{\lambda_i(S(X))}+\beta}{\alpha_t}.\\
\end{split}
\]
By the construction of $\cK$, it is clear that $\sum_{i=1}^{2\min\{m,n\}}\abs{\lambda_i(S(X))}\leq 2D$. Thus, \eqref{eq2:theo:spec} can be simply further upper bounded by
\[
\begin{split}
D^2F_t^*(\Theta)(H,H)\leq& 2\norm{H}_\infty^2\sum_{i=1}^{2\min\{m,n\}}\frac{\abs{\lambda_i(S(X))}+\beta}{\alpha_t}\\
\leq& 2\norm{H}_\infty^2\frac{2D+2\min\{m,n\}\beta}{\alpha_t}
\end{split}
\]
Finally, applying Lemma \ref{lemma:Duality}, we obtain
\[
D^2F_t(S(X))(Y,Y)\geq \frac{\alpha_t}{4(D+\min\{m,n\}\beta)}\norm{Y}_1^2,
\]
which implies the $\frac{\alpha_t}{4(D+\min\{m,n\}\beta)}$-strong convexity of $F_t$ over $\cK$. 

Finally, we prove the strongly convexity of $\msct{t}$ over $B(D)\in\mathbb{R}^{m+n}$. Let $X,Y\in B(D)$ be arbitrary matrices in the nuclear ball. The following inequality can be obtained
\[
\begin{split}
    &2\msct{t}(X)-2\msct{t}(Y)\\
    =&\msct{t}(S(X))-\msct{t}(S(Y))\\
    \geq&\inner{D\msct{t}(S(Y))}{S(X)-S(Y)}_F+\frac{\alpha_t}{8(D+\min\{m,n\}\beta)}\norm{S(X)-S(Y)}_1^2\\
    =&2\inner{D\msct{t}(Y)}{X-Y}_F+\frac{\alpha_t}{2(D+\min\{m,n\}\beta)}\norm{X-Y}_1^2,\\
\end{split}
\]
which implies the $\frac{\alpha_t}{2(D+\min\{m,n\}\beta)}$-strong convexity of $\msct{t}$ as desired.
\end{proof}

\subsection{Proof of Theorem \ref{thm:regret-mat}}
\begin{proof}
The proof is almost identical to the proof of Theorem \ref{thm:regret-omd}. From the strong convexity of $\msct{t}$ shown in Theorem \ref{thm:convex-mat} and the general upper bound in Proposition \ref{prop:omd}, we obtain\begin{equation}
\label{eq1:thm3}
\begin{split}
\mathcal{R}_{1:T}\leq&\comp_1(x_1)+\bd{\sct{1}}{x}{x_1}+\sum_{t=1}^{T}(\bd{\sct{t+1}}{x}{x_t}-\bd{\sct{t}}{x}{x_t})+\sum_{t=1}^{T}\frac{\dualnorm{g_t-h_t}^2}{2\eta_{t+1}}.
\end{split}
\end{equation}
Using Lemma \ref{lemma:bd_upper}, we have
\[
\begin{split}
&\sum_{t=1}^{T}(\bd{\msct{t+1}}{x}{x_{t}}-\bd{\msct{t}}{x}{x_{t}})\\
\leq & 4D(\ln(D+1)+\ln \min\{m,n\})\sum_{t=2}^{T}(\alpha_{t+1}-\alpha_t)\\
\leq & 4D(\ln(D+1)+\ln \min\{m,n\})\alpha_{T+1}\\
=&4D(\ln(D+1)+\ln \min\{m,n\}) \eta \sqrt{\sum_{t=1}^T\norm{g_t-h_t}^2_\infty}\\
=&4D\sqrt{\ln(D+1)+\ln \min\{m,n\}} \sqrt{\sum_{t=1}^T\norm{g_t-h_t}^2_\infty}\\
\end{split}
\]
Furthermore, from Lemma \ref{lemma:log}, it follows
\[
\begin{split}
\sum_{t=1}^T\frac{(D+1)\norm{g_t-h_t}_\infty^2}{4\alpha_t}\leq &\frac{D+1}{2}\sqrt{\ln(D+1)+\ln \min\{m,n\}}\sqrt{\sum_{t=1}^T\norm{g_t-h_t}_\infty^2}\\
\end{split}
\]
The claimed result is obtained by combining the inequalities above. 
\end{proof}

\subsection{Proof of Theorem \ref{thm:regret-ftrl-mat}}
\begin{proof}
Since $\bd{\msct{t}}{x}{x_1}$ is non-negative, increasing and $\frac{2\alpha_t}{D+\beta d}$ strongly-convex w.r.t. $\norm{\cdot}_1$, Proposition \ref{prop:ftrl} can be directly applied, and we get
\[
\begin{split}
\mathcal{R}_{1:T}\leq&\bd{\msct{t}}{x}{x_1}+\sum_{t=1}^T\frac{2D\norm{g_t-h_t}_\infty^2}{\sqrt{\frac{64D^2\alpha_t^2}{(D+\beta d)^2}
+\norm{g_t-h_t}_\infty^2}}\\
=&\bd{\msct{t}}{x}{x_1}+\frac{2D}{\eta}\sum_{t=1}^T\frac{\norm{g_t-h_t}_\infty^2}{\sqrt{\frac{64D^2}{(D+\beta d)^2}\sum_{s=1}^{t-1}\norm{g_s-h_t}^2_\infty
+\frac{1}{\eta^2}\norm{g_t-h_t}_\infty^2}}\\
\end{split}
\]
Setting $\beta=\frac{1}{\min\{m,n\}}$ and $\eta=\frac{1}{\sqrt{\ln(D+1)+\ln \min\{m,n\}}}$, we have
\[
\begin{split}
&\frac{\norm{g_t-h_t}_\infty^2}{\sqrt{\frac{64D^2}{(D+\beta d)^2}\sum_{s=1}^{t-1}\norm{g_s-h_t}^2_\infty
+\frac{1}{\eta^2}\norm{g_t-h_t}_\infty^2}}\\
=&\frac{\norm{g_t-h_t}_\infty^2}{\sqrt{\frac{64D^2}{(D+1)^2}\sum_{s=1}^{t-1}\norm{g_s-h_t}^2_\infty
+(\ln(D+1)+\ln d)\norm{g_t-h_t}_\infty^2}}\\
\leq &\frac{\norm{g_t-h_t}_\infty^2}{\sqrt{\sum_{s=1}^{t-1}\norm{g_s-h_t}^2_\infty
+\norm{g_t-h_t}_\infty^2}}\\
=&\frac{\norm{g_t-h_t}_\infty^2}{\sqrt{\sum_{s=1}^{t}\norm{g_s-h_t}^2_\infty}},\\
\end{split}
\]
where the inequality uses the assumptions $D\geq 1$ and $\min\{m,n\}>e$. Adding up from $1$ to $T$, we obtain
\[
\begin{split}
\mathcal{R}_{1:T}\leq &\bd{\msct{t}}{x}{x_1}+2D\sqrt{\ln(D+1)+\ln \min\{m,n\}}\sum_{t=1}^T\frac{\norm{g_t-h_t}_\infty^2}{\sqrt{\sum_{s=1}^{t}\norm{g_s-h_t}^2_\infty}}\\
\leq & \bd{\msct{t}}{x}{x_1}+4D\sqrt{\ln(D+1)+\ln \min\{m,n\}}\sqrt{\sum_{t=1}^T\norm{g_t-h_t}_\infty^2}
\end{split}
\]
The first term can be bounded by Lemma \ref{lemma:bd_upper}
\[
\begin{split}
\bd{\msct{T+1}}{x}{x_1}\leq& 4D(\ln(D+1)+\ln \min\{m,n\})\sqrt{\sum_{t=1}^T\norm{g_t-h_t}_\infty^2}\\
\end{split}
\]
Combining the inequalities above, we obtain
\[
\begin{split}
\mathcal{R}_{1:T}\leq &c(D,m,n)\sqrt{\sum_{t=1}^T\norm{g_t-h_t}_\infty^2},\\
\end{split}
\]
with $c(D,m,n)\in \mathcal{O}(D\sqrt{\ln(D+1)+\ln \min\{m,n\}})$, which is the claimed result.
\end{proof}

\section{Missing Proofs of Section 3.4}
\subsection{Proof of Lemma \ref{lemma:proj}}
\begin{proof}[Proof of Lemma \ref{lemma:proj}]
Let $x^*$ be the minimiser of $\bd{\psi_{t+1}}{x}{y_{t+1}}$ in $\cK$. Using the the fact $\ln a\geq 1-\frac{1}{a}$, we obtain 
\[
\ln(\frac{\abs{x^*_i}}{\beta}+1)\geq \frac{\abs{x^*_i}}{\abs{x^*_i+\beta}}
\] and \[
((\abs{x^*_i}+\beta)\ln(\frac{\abs{x^*_i}}{\beta}+1)-\abs{x^*_i}\geq 0.
\] 
Thus, $y_{i}=0$ implies $x^*_i=0$. Furthermore $\sgn(x^*_i)=\sgn(y_{i})$ must hold for all $i$ with $y_{i}\neq 0$, since otherwise we can always flip the sign of $x^*_i$ to obtain smaller objective value. So we assume without loss of generality that $y_{i}\geq 0$. We claim that $\sum_{i=1}^d x^*_i=D$ holds for the minimiser $x^*$. If it is not the case, there must be some $i$ with $x^*_i< y_{i}$, and increasing $x^*_i$ by a small enough amount can decrease the objective function. Thus minimising the Bregman divergence can be rewritten into
\begin{equation}
\label{min:simplex}
\begin{split}
\min_{x\in\Rd}\quad &\sum_{i=1}^d((x_i+\beta)\ln\frac{x_i+\beta}{y_{i}+\beta}-x_i)\\
\textrm{s.t.} \quad & \sum_{i=1}^d x_i=D\\
  &x_i\geq 0 \textrm{ for all } i=1,\ldots, d.    \\
\end{split}
\end{equation}
Using Lagrange multipliers for $x\in\Rd$, $\lambda \in \mathbb{R}$ and $\nu \in \Rd_+$
\[
\mathcal{L}(x,\lambda,\nu)=\sum_{i=1}^d((x_i+\beta)\ln\frac{x_i+\beta}{y_{i}+\beta}-x_i)-\nu^\top x-\lambda(D-\sum_{i=1}^d x_{i}).
\]
Setting $\frac{\partial \mathcal{L}}{\partial x_i}=0$, we obtain
\[
\ln\frac{x_i+\beta}{y_{i}+\beta}=\nu_i-\lambda.
\]
From the complementary slackness, we have $\nu_i=0$ for $x_i\neq 0$, which implies 
\[
x_i+\beta=\frac{1}{z}(y_{i}+\beta),
\]
where $z=\exp(\lambda)$. Let $x^*$ be the minimiser and $\mathcal{I}=\{i:x^*_i>0\}$ the support of $x^*$. Then we have 
\[
D+\abs{\mathcal{I}}\beta=\frac{1}{z}(\sum_{i\in \mathcal{I}}y_{i}+\abs{\mathcal{I}}\beta).
\]
Let $p$ be a permutation of $\{1,\ldots,d\}$ such that $y_{p(i)}\leq y_{p(i+1)}$. Define 
\[
\theta(j)=y_{p(j)}(D+(d-j+1)\beta)+\beta D-\beta\sum_{i\geq j}y_{p(i)}.
\]
It follows from
\[
\theta(j+1)-\theta(j)=(y_{p(j+1)}-y_{p(j)})(D+(d-j+1)\beta)\geq 0
\]
that $\theta(j)$ is increasing in $j$.
Let $\rho=\min \{i\lvert\theta(i) >0\}$. For all $j<\rho$, $p(j)$ is not in the support $\mathcal{I}$, since otherwise it would imply $x^*_{p(j)}\leq 0$. 
Thus the minimisation problem \eqref{min:simplex} is equivalent to
\begin{equation}
\label{min:support}
\begin{split}
\min_{x\in\Rd}\quad &\sum_{i=\rho}^d(x_{p(i)}+\beta)\ln\frac{x_{p(i)}+\beta}{y_{p(i)}+\beta}\\
\textrm{s.t.} \quad & \sum_{i=\rho}^d x_{p(i)}=D\\
  &x_{p(i)}> 0 \textrm{ for all } i=\rho,\ldots, d.    \\
\end{split}
\end{equation}
Define function $R:\mathbb{R}_{>0}\to \mathbb{R}, x\mapsto x\ln x$. It can be verified that $R$ is convex.
The objective function in \eqref{min:support} can be further rewritten into 
\[
\begin{split}
    &\sum_{i=\rho}^d(x_{p(i)}+\beta)\ln\frac{x_{p(i)}+\beta}{y_{p(i)}+\beta}\\
    =&\sum_{i=\rho}^d(y_{p(i)}+\beta)R(\frac{x_{p(i)}+\beta}{y_{p(i)}+\beta})\\
    \geq &\frac{1}{\sum_{i=\rho}^d(y_{p(i)}+\beta)}R(\frac{\sum_{i=\rho}^d(x_{p(i)}+\beta)}{\sum_{i=\rho}^d(y_{p(i)}+\beta)})\\
= &\frac{1}{\sum_{i=\rho}^d(y_{p(i)}+\beta)}R(\frac{D+(d-\rho+1)\beta}{\sum_{i=\rho}^d(y_{p(i)}+\beta)}),\\
\end{split}
\]
where the inequality follows from the Jensen's inequality. The minimum is attained if and only if 
$\frac{x_{p(i)}+\beta}{y_{p(i)}+\beta}$ are equal for all $i$. This is only possible when $\sigma(i)$ is in the support $\mathcal{I}$ for all $i\geq \rho$. Thus we can set $z=\frac{\sum_{i=\rho}^d(\abs{y_{p(i)}}+\beta)}{D+(d-\rho+1)\beta}$ and obtain $x^*_i=\max\{\frac{(\abs{y_i}+\beta)-\beta}{z},0\}\sgn(y_i)$
for $i=1\ldots d$, which is the claimed result.
\end{proof} 
\subsection{Proof of Corollary \ref{cor:acc}}
\begin{proposition}
\label{prop:decompose}
Let $\sequ{x}$ be any sequences and $\sequ{y}$ be the sequence produced by $y_{t+1}=\frac{a_t}{a_{1:t}}x_t+(1-\frac{a_t}{a_{1:t}})y_t$.
Choosing $a_t>0$ , we have, for all $x\in \mathcal{W}$ 
\[
	\begin{split}
	a_{1:T}\ex{f(y_{T+1})-f(x)}\leq&\ex{\mathcal{R}_{1:T}}-\sum_{t=1}^T(a_{1:t-1}\bd{l}{y_{t}}{y_{t+1}}),\\
	\end{split}
\]
with $\mathcal{R}_{1:T}=\sum_{t=1}^Ta_t(\inner{g_t}{x_t-x}+\comp(x_t)-\comp(x))$.\
\begin{proof}
It is interesting to see that the average scheme can be considered as an instance of the linear coupling introduced in \cite{allen2017linear}. For any sequence $\sequ{x}$, $\sequ{y}$ and $z_t=\frac{a_t}{a_{1:t}}x_t+(1-\frac{a_t}{a_{1:t}})y_t$,
we start the proof by bounding $a_t(f(y_{t+1})-f(x))$ as follows
		\begin{equation}
		\label{lemma:decompose:eq1}
		\begin{split} 
		&a_t(l(y_{t+1})-l(x))\\
		=&a_t(l(y_{t+1})-l(z_t)+l(z_t)-l(x))\\
		=& a_t(l(y_{t+1})-l(z_t)+\inner{\grad l(z_t)}{z_t-x}-\bd{l}{z_t}{x})\\
		=& a_t(l(y_{t+1})-l(z_t)+\inner{\grad l(z_t)}{z_t-x_t}+\inner{\grad l(z_t)}{x_t-x}-\bd{l}{z_t}{x})\\
		\end{split}		    
		\end{equation}
Denote by $\tau_t=\frac{a_t}{a_{1:t}}$ the weight. The first term of the the inequality above can be further bounded by
		\begin{equation}
		\label{lemma:decompose:eq2}
		\begin{split}
	    & a_t(l(y_{t+1})-l(z_t)+\inner{\grad l(z_t)}{z_t-x_t})\\
		=&a_t(l(y_{t+1})-l(z_t)+\frac{1-\tau_t}{\tau_t}\inner{\grad l(z_t)}{y_t-z_t})\\
		=& a_t(l(y_{t+1})-l(z_t)+(\frac{1}{\tau_t}-1)(l(y_t)-l(z_t))-(\frac{1}{\tau_t}-1)\bd{l}{y_t}{z_t})\\
		=&a_t(\frac{1}{\tau_t}-1)(l(y_t)-l(y_{t+1}))+\frac{a_t}{\tau_t}(l(y_{t+1})-l(z_t))-a_{1:t-1}\bd{l}{y_t}{z_t}.\\
		\end{split}
		\end{equation} Next, we have
	\begin{equation}
		\label{lemma:decompose:eq3}
	\begin{split}
	&\sum_{t=1}^Ta_t(\frac{1}{\tau_t}-1)(f(y_t)-f(y_{t+1}))\\
	=&\sum_{t=2}^{T}a_{1:t-1}(f(y_t)-f(y_{t+1}))\\
	=&\sum_{t=1}^{T-1}a_{t}f(y_{t+1})-a_{1:T-1}f(y_{T+1})\\
	=&\sum_{t=1}^{T}a_{t}f(y_{t+1})-a_{1:T}f(y_{T+1})\\
	= &\sum_{t=1}^{T}a_{t}(f(y_{t+1})-f(y_{T+1}))
	\end{split}	    
	\end{equation}
Combining \eqref{lemma:decompose:eq1}, \eqref{lemma:decompose:eq2} and \eqref{lemma:decompose:eq3}, we have
\[
	\begin{split}
	a_{1:T}(f(y_{T+1})-f(x))=&\sum_{t=1}^T\frac{a_t}{\tau_t}(l(y_{t+1})-l(z_t))\\
	&+\sum_{t=1}^T\inner{\grad l(z_t)}{x_t-x}\\
	&-\sum_{t=1}^T(a_{1:t-1}\bd{l}{y_t}{z_t}-a_t\bd{l}{z_t}{x}),
	\end{split}
\]
Simply setting $y_{t+1}\coloneqq z_t$ makes the first term above $0$ and implies $z_t=\frac{\sum_{s=1}^ta_sx_s}{a_{1:t}}$. Furthermore it
follows from the convexity of $\comp$
\[
\comp(y_{T+1})=\comp(\frac{\sum_{s=1}^Ta_sx_s}{a_{1:T}})\leq \sum_{t=1}^T\frac{a_t\comp(x_t)}{a_{1:T}}.
\] 
Combining the inequalities above and rearranging, we obtain
\[
	\begin{split}
	a_{1:T}(f(y_{T+1})-f(x))\leq&\sum_{t=1}^Ta_t(\inner{\grad l(z_t)}{x_t-x}+\comp(x_t)-\comp(x))\\
	&-\sum_{t=1}^T(a_{1:t-1}\bd{l}{z_{t-1}}{z_t}+a_t\bd{l}{z_t}{x})\\
	\leq&\sum_{t=1}^Ta_t(\inner{\grad l(z_t)}{x_t-x}+\comp(x_t)-\comp(x))\\
	&-\sum_{t=1}^T(a_{1:t-1}\bd{l}{z_{t-1}}{z_t})\\
	\end{split}
\]
Furthermore, we have
\[
\begin{split}
    &\ex{\sum_{t=1}^T\inner{a_t\grad l(z_t)}{x_t-x}}\\
    =&\ex{\sum_{t=1}^T\inner{a_tg_t}{x_t-x}}+\ex{\sum_{t=1}^T\inner{a_t(\grad l_t-g_t)}{x_t-x}}\\
    =&\ex{\sum_{t=1}^T\inner{a_tg_t}{x_t-x}}+\sum_{t=1}^T\ex{\inner{a_t(\grad l_t-g_t)}{x_t-x}}\\
    =&\ex{\sum_{t=1}^T\inner{a_tg_t}{x_t-x}}+\sum_{t=1}^T\ex{\ex{\inner{a_t(\grad l_t-g_t)}{x_t-x}\lvert z_t}}\\
    =&\ex{\sum_{t=1}^T\inner{a_tg_t}{x_t-x}}.\\
\end{split}
\]
Finally, we we obtain
\[
	\begin{split}
	a_{1:T}\ex{f(y_{T+1})-f(x)}\leq&\ex{\sum_{t=1}^Ta_t(\inner{g_t}{x_t-x}+\comp(x_t)-\comp(x))}\\
	&-\sum_{t=1}^T(a_{1:t-1}\bd{l}{y_{t}}{y_{t+1}}),\\
	\end{split}
\]
which is the claimed result.
\end{proof}
\end{proposition}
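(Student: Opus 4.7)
The plan is to view the averaging update $y_{t+1}=\tau_t x_t+(1-\tau_t)y_t$ with $\tau_t=a_t/a_{1:t}$ as a \emph{linear coupling} in the spirit of Allen-Zhu–Orecchia and reduce the per-round progress $a_t(l(y_{t+1})-l(x))$ to a weighted regret term on the iterates $\{x_t\}$ plus a negative Bregman remainder. The key observation is that the identity $y_{t+1}=z_t$ (with $z_t\coloneqq\tau_t x_t+(1-\tau_t)y_t$) makes the apparently free term $l(y_{t+1})-l(z_t)$ vanish, so all that remains is careful bookkeeping of the gradient and Bregman contributions.

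First I would split $a_t(l(y_{t+1})-l(x))=a_t(l(y_{t+1})-l(z_t))+a_t(l(z_t)-l(x))$ and rewrite the second piece via the exact first-order identity $l(z_t)-l(x)=\inner{\grad l(z_t)}{z_t-x}-\bd{l}{z_t}{x}$. I would then decompose $\inner{\grad l(z_t)}{z_t-x}=\inner{\grad l(z_t)}{z_t-x_t}+\inner{\grad l(z_t)}{x_t-x}$ and use the coupling identity $z_t-x_t=\frac{1-\tau_t}{\tau_t}(y_t-z_t)$, which follows directly from the definition of $z_t$. Applying first-order convexity in the form $\inner{\grad l(z_t)}{y_t-z_t}=l(y_t)-l(z_t)-\bd{l}{y_t}{z_t}$ and using $a_t(1/\tau_t-1)=a_{1:t-1}$ and $a_t/\tau_t=a_{1:t}$ turns the per-round bound into
\[
a_t(l(y_{t+1})-l(x))\leq a_{1:t}(l(y_{t+1})-l(z_t))+a_{1:t-1}(l(y_t)-l(y_{t+1}))-a_{1:t-1}\bd{l}{y_t}{z_t}+a_t\inner{\grad l(z_t)}{x_t-x}-a_t\bd{l}{z_t}{x}.
\]

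The main obstacle is to make the inequality telescope cleanly. Here I would exploit the defining identity $y_{t+1}=z_t$ to kill the first term on the right, and then sum $a_{1:t-1}(l(y_t)-l(y_{t+1}))$ by Abel summation to obtain $\sum_{t=1}^T a_t l(y_{t+1})-a_{1:T}l(y_{T+1})=\sum_{t=1}^T a_t(l(y_{t+1})-l(y_{T+1}))$, which after cancellation leaves exactly $a_{1:T}(l(y_{T+1})-l(x))$ on the left once the $a_t(l(y_{t+1})-l(x))$ terms are added back. For the composite part, convexity of $\comp$ applied to $y_{T+1}=\sum_{s=1}^T (a_s/a_{1:T})x_s$ yields $a_{1:T}\comp(y_{T+1})\leq \sum_{t=1}^T a_t\comp(x_t)$, producing the $\comp(x_t)-\comp(x)$ terms of $\mathcal{R}_{1:T}$. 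Dropping the non-positive $-a_t\bd{l}{z_t}{x}$ term (or keeping it, which only strengthens the bound) gives a deterministic inequality in terms of $\grad l(z_t)$.

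Finally, I would replace the true gradient by the stochastic $g_t$ and take expectations. Since $x_t$ is measurable before the query and $\ex{g_t\mid z_t}\in\partial l(z_t)$, the tower property gives $\ex{\inner{a_t(\grad l(z_t)-g_t)}{x_t-x}}=\ex{\ex{\inner{a_t(\grad l(z_t)-g_t)}{x_t-x}\mid z_t}}=0$, so $\ex{\sum_t a_t\inner{\grad l(z_t)}{x_t-x}}=\ex{\sum_t a_t\inner{g_t}{x_t-x}}$. Combining everything yields the claimed
\[
a_{1:T}\ex{f(y_{T+1})-f(x)}\leq \ex{\mathcal{R}_{1:T}}-\sum_{t=1}^T a_{1:t-1}\bd{l}{y_t}{y_{t+1}},
\]
where I have used $\bd{l}{y_t}{z_t}=\bd{l}{y_t}{y_{t+1}}$. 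The only subtle point throughout is tracking the two weight identities $a_t/\tau_t=a_{1:t}$ and $a_t(1/\tau_t-1)=a_{1:t-1}$ and timing the substitution $y_{t+1}=z_t$ correctly so the telescoping aligns with the target right-hand side.
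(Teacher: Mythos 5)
Your proposal is correct and follows essentially the same route as the paper's own proof: the same linear-coupling decomposition with $z_t=\tau_t x_t+(1-\tau_t)y_t$, the same per-round identity combining the coupling relation $z_t-x_t=\frac{1-\tau_t}{\tau_t}(y_t-z_t)$ with the weight identities $a_t/\tau_t=a_{1:t}$ and $a_t(1/\tau_t-1)=a_{1:t-1}$, the same telescoping, the substitution $y_{t+1}=z_t$, the convexity bound on $\comp(y_{T+1})$, dropping $-a_t\bd{l}{z_t}{x}$, and the tower-property argument to replace $\grad l(z_t)$ by $g_t$ in expectation. No substantive difference to report.
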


\begin{proof}[Proof of Corollary \ref{cor:acc}].
First of all, we have 
\begin{equation}
\label{cor:con:eq1}
	\begin{split}
	\ex{\mathcal{R}_{1:T}}\leq &c_1+c_2\ex{\sqrt{\sum_{t=1}^T\dualnorm{a_t(g_t-g_{t-1})}^2}}\\
	\leq &c_1+c_2\sqrt{\sum_{t=1}^T\ex{\dualnorm{a_t(g_t-g_{t-1})}^2}}\\
	\leq &c_1+c_2\sqrt{\sum_{t=1}^T\ex{\dualnorm{a_t(g_t-g_{t-1})}^2\lvert z_t}}.\\
	\end{split}
\end{equation}
For all $t$, we have
\begin{equation}
\label{cor:con:eq2}
\begin{split}
\ex{\dualnorm{a_t(g_t-g_{t-1})}^2\vert z_t}\leq&2a_t^2 (\ex{\dualnorm{g_t-\grad l(z_t)-g_{t-1}+\grad l(z_{t-1})}^2\lvert z_t})\\
&+2a_t^2 (\dualnorm{\grad l(z_t)-\grad l(z_{t-1})}^2).
\end{split}
\end{equation}
Since $z_{t-1}$ is fixed when $z_{t}$ is given, the first term above can be bounded by
\[
\begin{split}
&2a_t^2 (\ex{\dualnorm{g_t-\grad l(z_t)-g_{t-1}+\grad l(z_{t-1})}^2\lvert z_t})\\
\leq &4a_t^2 (\ex{\dualnorm{g_t-\grad l(z_t)}^2\lvert z_t}+\ex{\dualnorm{g_{t-1}-\grad l(z_{t-1})}^2\lvert z_t})\\
\leq &4a_t^2 (\ex{\dualnorm{g_t-\grad l(z_t)}^2\lvert z_t}+\ex{\dualnorm{g_{t-1}-\grad l(z_{t-1})}^2\lvert z_{t-1}})\\
\leq &4a_t^2 (\nu_t^2+\nu_{t-1}^2).\\
\end{split}
\]
Since $\cK$ is compact, there is some $L>0$ such that $\dualnorm{\grad l(z)}\leq L$ for all $z\in\mathbb{X}$. Thus the second term of \eqref{cor:con:eq2} can be bounded by
\begin{equation}
\label{cor:con:eq3}
\begin{split}
2a_t^2 \dualnorm{\grad l(z_t)-\grad l(z_{t-1})}^2\leq 8a_t^2L^2
\end{split}
\end{equation}
Combining \eqref{cor:con:eq1},  \eqref{cor:con:eq2} and  \eqref{cor:con:eq3}, we have
\[
	\ex{\mathcal{R}_{1:T}}\leq c1+c2\sqrt{8\sum_{t=1}^Ta_t^2(\nu_t^2+L^2)},
\]
and combining with Proposition \ref{prop:decompose}, we obtain
\[
	\begin{split}
	\ex{f(z_{T})-f(x)}\leq&\frac{c1+c2\sqrt{8\sum_{t=1}^Ta_t^2(\nu_t^2+L^2)}}{a_{1:T}}.\\
	\end{split}
\]
If $l$ is $M$-smooth, then for $t\geq 2$, we have
\begin{equation}
\label{cor:con:eq4}
\begin{split}
2a_t^2 \dualnorm{\grad l(z_t)-\grad l(z_{t-1})}^2\leq& \frac{4Ma_t^2}{a_{1:t-1}}a_{1:t-1}\bd{l}{z_{t-1}}{z_t}.\\
& 8Ma_{1:t-1}\bd{l}{z_{t-1}}{z_t}.\\
\end{split}
\end{equation}
Using fact $2ab-a^2\leq b^2$, we have
\begin{equation}
\label{cor:con:eq5}
\begin{split}
&2c_2\sqrt{2M\sum_{t=2}^Ta_{1:t-1}\bd{l}{z_{t-1}}{z_t}}-\sum_{t=2}^Ta_{1:t-1}\bd{l}{z_{t-1}}{z_t}\\
\leq &2Mc_2^2.
\end{split}
\end{equation}
Combining \eqref{cor:con:eq1},  \eqref{cor:con:eq2} and  \eqref{cor:con:eq5}, we have
\[
\begin{split}
	&\ex{\mathcal{R}_{1:T}}-\sum_{t=1}^Ta_{1:t-1}\bd{l}{z_{t-1}}{z_t}\\
	\leq& c_1+c_2\sqrt{\sum_{t=1}^T\ex{\dualnorm{a_t(g_t-g_{t-1})}^2\lvert z_t}}-\sum_{t=1}^Ta_{1:t-1}\bd{l}{z_{t-1}}{z_t}\\
	\leq&  c_11+c_2\sqrt{8\sum_{t=1}^Ta_t^2(\nu_t^2)}\\
	&+c_2\sqrt{\sum_{t=1}^T2a_t^2 \dualnorm{\grad l(z_t)-\grad l(z_{t-1})}^2}-\sum_{t=1}^Ta_{1:t-1}\bd{l}{z_{t-1}}{z_t}\\
	\leq&  c_11+c_2\sqrt{8\sum_{t=1}^Ta_t^2(\nu_t^2)}+c_2\sqrt{2}\dualnorm{\grad l(z_1)}\\
	&+c_2\sqrt{\sum_{t=2}^T2a_t^2 \dualnorm{\grad l(z_t)-\grad l(z_{t-1})}^2}-\sum_{t=2}^Ta_{1:t-1}\bd{l}{z_{t-1}}{z_t}\\
	\leq&  c_1+c_2\sqrt{8\sum_{t=1}^Ta_t^2(\nu_t^2)}+\sqrt{2}c_2L+2Mc_2^2,\\
\end{split}
\]
which implies
\[
	\begin{split}
	\ex{f(z_{T})-f(x)}\leq&\frac{ c_1+c_2\sqrt{8\sum_{t=1}^Ta_t^2\nu_t^2}+\sqrt{2}c_2L+2Mc_2^2}{a_{1:T}}.\\
	\end{split}
\]
\end{proof}

\section{Technical Lemmata}
\begin{lemma}
	\label{lemma:log}
	For positive values $a_1,\ldots,a_n$ the following holds:
	\begin{enumerate}
	\item \[\sum_{i=1}^{n}\frac{a_i}{\sum_{k=1}^{i}a_k+1}\leq \log (\sum_{i=1}^{n}a_i+1)\]
	\item \[\sqrt{\sum_{i=1}^{n}a_i}\leq\sum_{i=1}^{n}\frac{a_i}{\sqrt{\sum_{j=1}^ia_j^2}}\leq 2\sqrt{\sum_{i=1}^{n}a_i}.\]
	\end{enumerate}
	\begin{proof}
    The proof of (1) can be found in Lemma A.2 in \cite{levy2018online}
	For (2), we define $A_0=1$ and $A_i=\sum_{k=1}^{i}a_i+1$ for $i>0$. 
	Then we have 
		\[
		\begin{split}
		\sum_{i=1}^{n}\frac{a_i}{\sum_{k=1}^{i}a_k+1}=&\sum_{i=1}^{n}\frac{A_{i}-A_{i-1}}{A_i}\\
		=&\sum_{i=1}^{n}(1-\frac{A_{i-1}}{A_i})\\
		\leq &\sum_{i=1}^{n}\ln\frac{A_{i}}{A_{i-1}}\\
		= & \ln A_n-\ln A_0\\
		= &\ln \sum_{i=1}^{n}(a_i+1),
		\end{split}
		\]
	where the inequality follows from the concavity of $\log$.
	\end{proof}
\end{lemma}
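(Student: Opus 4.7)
The plan is to establish both parts by the same underlying technique: reparametrise each summand in terms of the partial sums $A_i = \sum_{k=1}^{i} a_k$ (with $A_0 = 0$), recognise the resulting expression as a telescoping increment, and invoke an elementary one-variable inequality to sandwich it by a clean function of the ratio $A_i/A_{i-1}$.

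For part (1), with the shift $B_i := A_i + 1$ one has $B_0 = 1$ and $a_i = B_i - B_{i-1}$, so $\frac{a_i}{A_i+1} = 1 - \frac{B_{i-1}}{B_i}$. The elementary bound $1 - 1/x \leq \ln x$, valid for all $x > 0$ (checked by comparing the derivatives of the two sides at $x = 1$), applied with $x = B_i/B_{i-1} \geq 1$ gives $1 - B_{i-1}/B_i \leq \ln(B_i/B_{i-1})$. Summation telescopes to $\ln(B_n/B_0) = \ln(1 + \sum_{i=1}^n a_i)$, which is the claim. Equivalently, $1 - y \leq -\ln y$ at $y = B_{i-1}/B_i \in (0,1]$.

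For part (2), I interpret each summand as $\frac{A_i - A_{i-1}}{\sqrt{A_i}}$ (matching the form in which this inequality is invoked in the proofs of Theorems \ref{thm:regret-omd} and \ref{thm:regret-ftrl}, where $a_t = \norm{g_t-h_t}_\infty^2$). The upper bound is obtained by factoring $A_i - A_{i-1} = (\sqrt{A_i} - \sqrt{A_{i-1}})(\sqrt{A_i} + \sqrt{A_{i-1}})$ and using the trivial estimate $\sqrt{A_i} + \sqrt{A_{i-1}} \leq 2\sqrt{A_i}$, which yields $\frac{a_i}{\sqrt{A_i}} \leq 2(\sqrt{A_i} - \sqrt{A_{i-1}})$; summing telescopes to $2\sqrt{A_n}$. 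The lower bound is immediate from the monotonicity of $(A_i)$: replacing every denominator by the largest one $\sqrt{A_n}$ only decreases each term, so $\sum_{i=1}^n a_i/\sqrt{A_i} \geq (\sum_{i=1}^n a_i)/\sqrt{A_n} = \sqrt{A_n}$.

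The main obstacle is essentially bookkeeping rather than mathematical: one must ensure the edge case $i=1$ goes through under the convention $A_0 = 0$, since the telescoping identity has to still bound $a_1/\sqrt{A_1}$; this reduces to the triviality $a_1/\sqrt{a_1} \leq 2\sqrt{a_1}$. Apart from this accounting, the argument is a routine application of the telescoping-plus-elementary-inequality template that underlies most \textbf{AdaGrad}-style analyses, and poses no real technical difficulty.
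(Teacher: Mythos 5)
Your proof of part (1) is exactly the paper's argument: write $a_i/(A_i+1)$ as $1-B_{i-1}/B_i$ and telescope via $1-1/x\leq\ln x$. For part (2), however, you have done more than the paper: the appendix's block labelled ``For (2)'' in fact reproves part (1) a second time (the displayed chain ends at $\ln\sum_i(a_i+1)$, not at anything involving square roots), so the paper contains no proof of (2) at all. Your argument --- factoring $A_i-A_{i-1}=(\sqrt{A_i}-\sqrt{A_{i-1}})(\sqrt{A_i}+\sqrt{A_{i-1}})$ and bounding the second factor by $2\sqrt{A_i}$ for the upper bound, and replacing every denominator by $\sqrt{A_n}$ for the lower bound --- is the standard and correct way to close that gap, and your handling of the $i=1$ term under $A_0=0$ is fine. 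You were also right to flag the denominator $\sqrt{\sum_{j\leq i}a_j^2}$ as a typo for $\sqrt{\sum_{j\leq i}a_j}$: as literally written the claim already fails for $n=1$ unless $\tfrac14\leq a_1\leq 1$, and the invocations in Theorems~\ref{thm:regret-omd} and~\ref{thm:regret-ftrl} use it with $a_t=\norm{g_t-h_t}_\infty^2$ and plain partial sums in the denominator, confirming your reading. In short: part (1) matches the paper; part (2) is a correct proof of a statement the paper asserts but never actually proves.
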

\begin{lemma}
\label{lemma:smooth}
Let $l$ be convex and $M$-smooth over $\mathbb{X}$, i.e. 
\[
l(x)\leq l(y)+\inner{\grad l(y)}{x-y}+\frac{M}{2}\norm{x-y}^2.
\]
Then 
\[
\dualnorm{\grad l(x)-\grad l(y)}^2\leq 2M\bd{l}{x}{y}
\]
holds for all $x,y\in\mathbb{X}$.
\begin{proof}
Let $x,y\in\mathbb{X}$ be arbitrary. Define $h:\mathbb{X}\to \mathbb{R}, z\mapsto l(z)-\inner{\grad l(y)}{z}$. Clearly, $h$ is $M$-smooth and minimised at $y$. Thus we have
\[
\begin{split}
    h(y)= &\min_{z\in\mathbb{X}}h(z)\\
    \leq &\min_{z\in\mathbb{X}}h(x)+\inner{\grad h(x)}{z-x}+\frac{M}{2}\norm{z-x}^2\\
    \leq & \min_{\gamma\geq 0}h(x)-\dualnorm{\grad h(x)}\gamma+\frac{M}{2}\gamma^2\\
    =&h(x)-\frac{1}{2M}\dualnorm{\grad h(x)}^2,
\end{split}
\]
where the first inequality uses the $M$-smoothness of $h$, and the second uses $\inner{\grad h(x)}{z-x}\geq-\dualnorm{\grad h(x)}\norm{z-x}$, for which we choose $z$ such that the equality holds.
This implies
\[
\begin{split}
\frac{1}{2M}\dualnorm{\grad l(x)-\grad l(y)}^2\leq& l(x)-l(y)-\inner{\grad l(y)}{x-y}=\bd{l}{x}{y},
\end{split}
\]
and the desired result follows.
\end{proof}
\end{lemma}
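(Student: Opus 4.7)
The plan is to use the classical shift-and-minimise argument. For arbitrary but fixed $x,y\in\mathbb{X}$, I would first define the auxiliary function
\[
h:\mathbb{X}\to\mathbb{R},\quad z\mapsto l(z)-\inner{\grad l(y)}{z}.
\]
Subtracting a linear functional preserves both convexity and $M$-smoothness, so $h$ is convex and $M$-smooth with $\grad h(z)=\grad l(z)-\grad l(y)$. Since $\grad h(y)=0$, convexity forces $y$ to be a global minimiser of $h$.

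Next I would apply the $M$-smoothness upper bound to $h$ anchored at $x$, and then minimise the resulting quadratic over $z$. Concretely, for every $z\in\mathbb{X}$,
\[
h(y)\leq h(z)\leq h(x)+\inner{\grad h(x)}{z-x}+\tfrac{M}{2}\norm{z-x}^2.
\]
By the definition of the dual norm, for any $\gamma\geq 0$ there is some $z$ with $\norm{z-x}=\gamma$ achieving (or approaching, in the finite-dimensional case trivially attaining) $\inner{\grad h(x)}{z-x}=-\gamma\dualnorm{\grad h(x)}$. Infimising the scalar quadratic $\gamma\mapsto -\gamma\dualnorm{\grad h(x)}+\tfrac{M}{2}\gamma^2$ over $\gamma\geq 0$ gives the minimum $-\tfrac{1}{2M}\dualnorm{\grad h(x)}^2$, so
\[
h(y)\leq h(x)-\tfrac{1}{2M}\dualnorm{\grad h(x)}^2.
\]

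Finally, I would expand $h(x),h(y)$ back in terms of $l$ and rearrange. The left-hand side becomes $l(y)-\inner{\grad l(y)}{y}$ and the right-hand side becomes $l(x)-\inner{\grad l(y)}{x}-\tfrac{1}{2M}\dualnorm{\grad l(x)-\grad l(y)}^2$. Moving terms across yields
\[
\tfrac{1}{2M}\dualnorm{\grad l(x)-\grad l(y)}^2\leq l(x)-l(y)-\inner{\grad l(y)}{x-y}=\bd{l}{x}{y},
\]
which is exactly the claim.

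There is no real obstacle here; the only point needing mild care is the existence of an alignment direction $z-x$ realising $\inner{\grad h(x)}{z-x}=-\norm{z-x}\dualnorm{\grad h(x)}$. This is immediate in the finite-dimensional normed space setting assumed throughout the paper, because the dual norm is attained on the unit sphere. The rest is bookkeeping around the smoothness inequality and the quadratic minimisation.
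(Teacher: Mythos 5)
Your proposal is correct and follows essentially the same argument as the paper: the same auxiliary function $h(z)=l(z)-\inner{\grad l(y)}{z}$, the same smoothness-plus-quadratic-minimisation step along an aligned direction, and the same rearrangement. Your explicit remark that the aligning direction exists because the dual norm is attained in finite dimensions is a small but welcome clarification of a step the paper leaves implicit.
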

\begin{lemma}
\label{lemma:bd_upper}
Define $\psi:\Rd\to \mathbb{R}, x\mapsto\sum_{i=1}^d\scf(x_i)$ for $\scf$ be as defined in \eqref{eq:entropy}. Assume $\norm{x}_1\leq D$ for all $x\in\cK\subseteq \Rd$. Setting $\beta=\frac{1}{d}$, we obtain for all $x,y\in\cK$
\[
\bd{\psi}{x}{y}\leq 4D(\ln(D+1)+\ln d).
\]
Similarly, we define $\Psi:\mathbb{R}^{m,n}\to \mathbb{R}, x\mapsto \psi\circ\sigma(x)$. Assume $\norm{x}_1\leq D$ for all $x\in \cK\subseteq \mathbb{R}^{m,n}$. Setting $\beta=\frac{1}{\min\{m,n\}}$, we obtain for all $x,y\in\cK$
\[
\bd{\Psi}{x}{y}\leq 4D(\ln(D+1)+\ln \min\{m,n\}).
\]
\begin{proof}
From the definition of the Bregman divergence it follows for all $x,y\in\cK$
\[
\begin{split}
\bd{\psi}{x}{y}=&\psi(x)-\psi(y)-\inner{\grad \psi(y)}{x-y}\\
\leq & \inner{\grad \psi(x)-\grad \psi(y)}{x-y}\\
\leq & \norm{\grad\psi(x)-\grad\psi(y)}_\infty\norm{x-y}_1\\
\leq & 2D(\norm{\grad \psi(x)}_\infty+\norm{\grad \psi(y)}_\infty).
\end{split}
\]
Using the closed form of $\norm{\grad \psi(x)}_\infty$, we have for $x\in\cK$
\[
\begin{split}
\norm{\grad \psi(x)}_\infty =&\max_i\abs{\ln(\frac{\abs{x_i}}{\beta}+1)}\\
\leq &\abs{\ln (D+\beta)}+\abs{\ln(\frac{1}{\beta})}\\
\leq & \ln (D+1)+\ln d.
\end{split}
\]
Combining the inequalities above and choosing $\beta= \frac{1}{d}$, we obtain
\[
\begin{split}
\bd{\psi}{x}{y}=&4D(\ln(D+1)+\ln d).
\end{split}
\]
Using the same argument, we have for all $x,y\in\cK\subseteq \mathbb{R}^{m,n}$
\[
\begin{split}
\bd{\Psi}{x}{y}=& 2D(\norm{\grad \Psi(x)}_\infty+\norm{\grad \Psi(y)}_\infty).
\end{split}
\]
From the characterisation of subgradient, it follows for $x\in\cK$
\[
\begin{split}
\norm{\grad \Psi(x)}_\infty = & \norm{\grad \scf(\sigma(x))}_\infty\\
\leq & \ln (D+1)+\ln \frac{1}{\beta}.
\end{split}
\]
Combine the inequalities above and choose $\beta= \frac{1}{\min\{m,n\}}$, we obtain
\[
\begin{split}
\bd{\Psi}{x}{y}\leq &4D(\ln(D+1)+\ln  \min\{m,n\}).
\end{split}
\]
\end{proof}
\end{lemma}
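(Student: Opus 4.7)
The plan is to bound the Bregman divergence by the standard three-term sandwich: convexity gives $\bd{\psi}{x}{y}\le \inner{\grad\psi(x)-\grad\psi(y)}{x-y}$, Hölder's inequality converts the right side to $\norm{\grad\psi(x)-\grad\psi(y)}_\infty\norm{x-y}_1$, and the triangle inequality on both factors reduces everything to a pointwise bound on $\norm{\grad\psi(x)}_\infty$ for $x\in\cK$. Using $\norm{x-y}_1\le\norm{x}_1+\norm{y}_1\le 2D$ gives the factor of $2D$, and a dual bound of $\ln(D+1)+\ln d$ would yield the target $4D(\ln(D+1)+\ln d)$.

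For the dual-norm estimate, the plan is to invoke Lemma~\ref{lemma:property}, which (with the convention $\alpha=1$ used for $\psi$) gives $\grad\psi(x)_i=\ln(\abs{x_i}/\beta+1)\sgn(x_i)$. Since $\abs{x_i}\le \norm{x}_1\le D$, one has $\norm{\grad\psi(x)}_\infty\le \ln(D/\beta+1)$. Setting $\beta=1/d$ turns this into $\ln(Dd+1)\le\ln(d(D+1))=\ln(D+1)+\ln d$, which is the inequality behind the chain displayed in the proof shell ($\abs{\ln(D+\beta)}+\abs{\ln(1/\beta)}$ should be parsed as the split $\ln(\abs{x_i}/\beta+1)=\ln(\abs{x_i}+\beta)-\ln\beta$). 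Combining everything gives the scalar case.

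For the matrix case, the same three-step template applies to $\Psi=\psi\circ\sigma$. The only new ingredient is an explicit form for $\grad\Psi$: by the standard calculus of spectral (absolutely symmetric) functions, if $x=U\diag(\sigma(x))V^\top$, then $\grad\Psi(x)=U\diag(\grad\psi(\sigma(x)))V^\top$, so the spectral norm (denoted $\norm{\cdot}_\infty$ here) satisfies $\norm{\grad\Psi(x)}_\infty=\norm{\grad\psi(\sigma(x))}_\infty$. Since $\norm{\sigma(x)}_\infty\le\norm{\sigma(x)}_1=\norm{x}_1\le D$, the scalar estimate transports verbatim with $d$ replaced by $\min\{m,n\}$ once we set $\beta=1/\min\{m,n\}$. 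Hölder in the Schatten $1$-$\infty$ duality controls $\inner{\grad\Psi(x)-\grad\Psi(y)}{x-y}_F$ by $\norm{\grad\Psi(x)-\grad\Psi(y)}_\infty\norm{x-y}_1$, closing the argument.

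The main obstacle is a bookkeeping one rather than a conceptual one: making sure the logarithm is bounded only in the forward direction (it is nonnegative, so the absolute value disappears) and checking the inequality $\ln(Dd+1)\le\ln(D+1)+\ln d$ holds without extra hypotheses on $d$ (it needs only $d\ge 1$). In the matrix case one also needs to be explicit that the norm denoted $\norm{\cdot}_\infty$ is the spectral norm dual to the nuclear norm, and to cite (or quickly justify) the gradient formula for spectral functions so that the Hölder step is legitimate; with Lemma~\ref{lemma:D2} already available in the paper, this is essentially immediate.
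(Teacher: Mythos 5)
Your proposal is correct and follows essentially the same route as the paper's proof: the convexity/H\"older/triangle-inequality sandwich giving the factor $2D(\norm{\grad\psi(x)}_\infty+\norm{\grad\psi(y)}_\infty)$, the explicit gradient formula from Lemma~\ref{lemma:property} to bound the dual norm by $\ln(D+1)+\ln d$, and the spectral-function gradient formula to transport the argument to the matrix case. Your algebraic path $\ln(D/\beta+1)=\ln(Dd+1)\leq\ln(D+1)+\ln d$ is in fact a slightly cleaner version of the paper's split $\abs{\ln(D+\beta)}+\abs{\ln(1/\beta)}$, but the argument is the same.
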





\end{appendices}
\bibliography{lib}
\end{document}